\newtheorem{theorem}{Theorem}[section]
\newtheorem{lemma}[theorem]{Lemma}
\newtheorem{prop}[theorem]{Proposition}
\newtheorem{definition}[theorem]{Definition}
\newtheorem*{theorem*}{Theorem}
\theoremstyle{definition}
\newtheorem*{remark}{Remark}
\newcommand{\tv}{{\tilde{v}}}
\newcommand{\Rn}{\mathbb{R}^n}
\newcommand{\ep}{\varepsilon}
\newcommand{\R}{\mathbb{R}}
\newcommand{\C}{\mathcal{C}}
\newcommand{\be}{\begin{equation}}
\newcommand{\ee}{\end{equation}}
\newcommand{\bee}{\begin{equation*}}
\newcommand{\eee}{\end{equation*}}
\newcommand{\bea}{\begin{eqnarray}}
\newcommand{\eea}{\end{eqnarray}}
\newcommand{\bs}{\begin{split}}
\newcommand{\es}{\end{split}}
\numberwithin{equation}{section}
\title[]{Regularity of radial extremal solutions for some non local semilinear equations }
\author[A. Capella]{Antonio Capella}
\address{\noindent  A. Capella - Instituto de Matem\'aticas, Universidad Nacional 
Aut\'onoma de M\'exico, 
Circuito Exterior, Ciudad Universitaria	
 C.P. 04510, M\'exico D.F., M\'exico.}\email{capella@matem.unam.mx}
\author[J. Davila]{Juan D\'avila}
\address{\noindent J. D\'avila- Departamento de
Ingenier\'{\i}a  Matem\'atica and CMM, Universidad de Chile, Casilla
170 Correo 3, Santiago, Chile.} \email{jdavila@dim.uchile.cl}
\author[L. Dupaigne]{Louis Dupaigne}
\address{L. Dupaigne- LAMFA, UMR CNRS 6140, Universit´e Picardie Jules Verne,
33 rue St Leu, 80039 Amiens, France.}
\email{ldupaigne@math.cnrs.fr}
\author[Y. Sire]{Yannick Sire}
\address{Y. Sire- Universit\'e Aix-Marseille 3, Paul C\'ezanne -- LATP  Marseille, France.}
\email{sire@cmi.univ-mrs.fr}
\begin{document}
\begin{abstract}
We investigate stable solutions of elliptic equations of the type
\begin{equation*}
\left \{
\begin{aligned}
(-\Delta)^s u&=\lambda f(u) \qquad
{\mbox{ in $B_1 \subset \R^{n}$}}
\\
u&= 0
\qquad{\mbox{ on $\partial B_1$,}}\end{aligned}\right . \end{equation*}
where $n\ge2$, $s \in (0,1)$, $\lambda \geq 0$ and $f$ is any smooth positive superlinear function. The operator $(-\Delta)^s$ stands for the fractional
Laplacian, a pseudo-differential operator of order $2s$.  According to the value of $\lambda$, we study the existence and regularity of weak solutions $u$.

\end{abstract}
\maketitle



\noindent{\em Keywords:} Boundary reactions,
fractional operators, extremal solutions
\bigskip

\noindent{\em 2000 Mathematics Subject Classification:}
35J25, 47G30, 35B45, 53A05.

\section{Introduction}
We are interested in the regularity properties of stable solutions
satisfying the following semilinear problem involving the fractional Laplacian
\begin{equation}
\label{problem}
\left\{
\begin{array}{ll}
(-\Delta)^{s}u = \lambda f(u)&\text{in }  B_1,
\\
u=0&\text{on } \partial B_1.
\end{array}
\right.
\end{equation}
Here, $B_1$ denotes the unit-ball  in $\mathbb{R}^n$, $n\ge2$, and $s \in (0,1)$. The operator  $(-\Delta)^{s}$ is defined as follows: let $\left \{ \varphi_k \right \}_{k=1}^\infty$ denote an orthonormal basis of $L^2(B_{1})$ consisting of eigenfunctions of $-\Delta$ in $B_1$ with homogeneous Dirichlet boundary conditions, associated to the eigenvalues $\{\mu_{k}\}_{k=1}^{\infty}$. Namely, $0<\mu_{1}<\mu_{2}\le\mu_{3}\le\dots\le\mu_{k}\to+\infty$, $\int_{B_{1}}\varphi_{j}\varphi_{k}\;dx=\delta_{j,k}$ and
\begin{equation*}
\left\{
\begin{array}{ll}
-\Delta \varphi_k =\mu_k \varphi_k&\text{in }  B_1
\\
\varphi_k=0&\text{on } \partial B_1.
\end{array}
\right.
\end{equation*}
The operator $(-\Delta)^s$ is defined for any $u\in C^\infty_{c}(B_{1})$ by
$$
(-\Delta)^s u=\sum_{k=1}^\infty  \mu_k^s u_k\varphi_k ,
$$
where
 $$
u=\sum_{k=1}^\infty u_k \varphi_k,\qquad\text{and}\qquad  u_{k}=\int_{B_{1}}u\varphi_{k}\;dx.
$$
This operator can be extended by density for $u$ in the Hilbert space \begin{equation} \label{defH}
H=\{u\in L^2(B_{1})\; : \; \| u \|^2_{H} = \sum_{k=1}^\infty \mu_k^{s}\vert u_k\vert^2 <+\infty\}.
\end{equation}
Note that
$$
H=
\left\{
\begin{aligned}
H^s(B_{1})&\qquad\text{if $s\in (0,1/2)$},\\
H^{1/2}_{00}(B_{1})&\qquad\text{if $s=1/2$},\\
H^s_{0}(B_{1})&\qquad\text{if $s\in (1/2,1)$},
\end{aligned}
\right.
$$
see Section \ref{prelim} for further details. In all cases,
$
(-\Delta)^s: H\to H'
$
is an isometric isomorphism from $H$ to its topological dual $H'$. We denote by $(-\Delta)^{-s}$ its inverse, i.e. for $\psi~\in~ H'$, $\varphi=(-\Delta)^{-s}\psi$ if $\varphi$ is the unique solution in $H$ of $(-\Delta)^{s}\varphi=\psi$.

We will assume that  the nonlinearity $f$ is smooth, nondecreasing, 
\begin{equation}\label{Hyp_f}
 f(0)>0,\quad\text{ and }\quad\lim_{u\to +\infty} \frac{f(u)}{u}=+\infty.
 \end{equation}
 In the spirit of  \cite{bcmr}, weak solutions for \eqref{problem} are defined as follows: let 
 $\varphi_{1}>0$ denote the eigenfunction associated to the principal eigenvalue of the operator $-\Delta$ with homogeneous Dirichlet boundary condition on $B_{1}$, normalized by $\| \varphi_{1}\|_{L^2(B_{1})}~=~1$.
 \begin{definition}
A measurable function $u$ in $B_{1}$ such that $\int_{B_{1}}\vert u\vert \varphi_{1}\;dx<+\infty$ and $\int_{B_{1}}f(u) \varphi_{1}\;dx<+\infty$,
is a weak solution of \eqref{problem} if
\begin{equation} \label{j1}
\int_{B_1} u\psi\;dx =\lambda \int_{B_1} f(u)(-\Delta)^{-s}\psi\ dx,
\end{equation}
for all $\psi\in C^{\infty}_{c}(B_{1})$.
\end{definition}
The right-hand side in \eqref{j1} is well defined, since for every $\psi\in C^\infty_{c}(B_{1})$, there exists a constant $C>0$ such that $\vert (-\Delta)^{-s}\psi\vert\le C\varphi_{1}$, see Lemma \ref{lemma1bcmr} and its proof.

We shall be interested in weak solutions of \eqref{problem} having the following stability property.
 \begin{definition}\label{stabFrac}
A weak solution $u$ of \eqref{problem} is semi-stable if for all { $\psi \in C^\infty_c(B_{1})$} we have
\begin{equation} \label{stability} 
\int_{B_1} |(-\Delta)^{\frac{s}{2}} \psi|^2\;dx \ge \int_{B_1} f'(u) \psi^2\;dx. 
\end{equation} 
\end{definition}
The following result gives the existence of solutions according to the values of $\lambda$.
\begin{prop} 
\label{exist}
Let $s \in (0,1)$. There exists $\lambda^*> 0$ such that
\begin{itemize}
\item for $0<\lambda<\lambda^*$, there exists a minimal solution $u_{\lambda}\in H\cap L^\infty(B_{1})$ of \eqref{problem}. In addition, $u_{\lambda}$ is semi-stable and increasing with $\lambda$.
\item for $\lambda=\lambda^*$, the function $u^*=\lim_{\lambda\nearrow\lambda^*}u_{\lambda}$ is a weak solution of \eqref{problem}. We call $\lambda^*$ the extremal value of the parameter and $u^*$ the extremal solution.
\item for $\lambda>\lambda^*$, \eqref{problem} has no solution $u\in H\cap L^\infty(B_{1})$.
\end{itemize}
\end{prop}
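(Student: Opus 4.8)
The plan is to follow the by-now classical scheme for extremal solutions (in the spirit of \cite{bcmr} and its predecessors), the only real adaptation being that every tool must be read off from the spectral calculus for $(-\Delta)^s$ and from the estimate $|(-\Delta)^{-s}\psi|\le C\varphi_1$ of Lemma \ref{lemma1bcmr}. Define $\lambda^*:=\sup\{\lambda>0\,:\,\eqref{problem}\text{ admits a solution in }H\cap L^\infty(B_1)\}$. Two things must be settled at the outset: $\lambda^*<\infty$ and $\lambda^*>0$. For finiteness, test the equation against $\varphi_1$: since $(-\Delta)^s\varphi_1=\mu_1^s\varphi_1$, a bounded solution $u$ satisfies $\mu_1^s\int_{B_1}u\varphi_1\,dx=\lambda\int_{B_1}f(u)\varphi_1\,dx$, and since $f(t)\ge mt$ for $t\ge0$ with $m:=\inf_{t>0}f(t)/t\in(0,\infty)$ (finite and positive because $f(0)>0$ and $f$ is superlinear), this forces $\lambda m\le\mu_1^s$, hence $\lambda^*\le\mu_1^s/m$.

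For $\lambda^*>0$ and the minimal solution I would use monotone iteration. The constant $0$ is a subsolution because $f(0)>0$. For a supersolution, let $\zeta\ge0$ solve $(-\Delta)^s\zeta=1$ in $B_1$, $\zeta=0$ on $\partial B_1$; by elliptic regularity $\zeta\in L^\infty(B_1)$. Since $\delta:=\sup_{t>0}t/f(t)$ is finite and positive, for $0<\lambda\le\delta/\|\zeta\|_{L^\infty}$ one may pick $C>0$ with $C\ge\lambda f(C\|\zeta\|_{L^\infty})$, and then $\bar u:=C\zeta$ is a nonnegative bounded supersolution. Starting from $u_0=0$ and setting $u_{k+1}:=\lambda(-\Delta)^{-s}f(u_k)$, the comparison principle for $(-\Delta)^s$ gives $0=u_0\le u_1\le\dots\le u_k\le\bar u$, together with a uniform $H$-bound (pair the equation for $u_{k+1}$ with $u_{k+1}$ and use $0\le u_k,u_{k+1}\le\bar u$); hence $u_k\uparrow u_\lambda\in H\cap L^\infty(B_1)$, a weak solution. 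If $v\ge0$ is any (super)solution of \eqref{problem}, then $v\ge u_k$ for all $k$ by induction, so $v\ge u_\lambda$: thus $u_\lambda$ is \emph{the} minimal solution. The same comparison shows that if $\lambda_1<\lambda_2$ and $\lambda_2$ is admissible then so is $\lambda_1$ ($u_{\lambda_2}$ being a supersolution of the $\lambda_1$-problem), so the admissible parameters form the interval $(0,\lambda^*)$, and that $\lambda\mapsto u_\lambda$ is nondecreasing.

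Next, the semi-stability of $u_\lambda$ for $\lambda<\lambda^*$. Since $\lambda f'(u_\lambda)\in L^\infty(B_1)$, the operator $(-\Delta)^s-\lambda f'(u_\lambda)$ on $H$ has discrete spectrum and a principal eigenvalue $\mu_1(\lambda)$ with a positive principal eigenfunction $\phi$ (bounded, with $\phi\le C\varphi_1$ via Lemma \ref{lemma1bcmr}). The claim is $\mu_1(\lambda)\ge0$, which is precisely \eqref{stability} by the Rayleigh-quotient characterization of $\mu_1(\lambda)$. Suppose $\mu_1(\lambda)<0$. Using the lower bound $u_\lambda\ge\lambda f(0)\zeta\ge c\,\varphi_1$ (the last step a Hopf-type estimate) together with $\phi\le C\varphi_1$, the function $u_\lambda-\ep\phi$ is nonnegative for $\ep$ small; a second-order Taylor expansion of $f$ around $u_\lambda$, using $(-\Delta)^s(u_\lambda-\ep\phi)=\lambda f(u_\lambda)-\ep\lambda f'(u_\lambda)\phi-\ep\mu_1(\lambda)\phi$, the boundedness of $f''$ on $[0,\|u_\lambda\|_{L^\infty}]$, and $-\mu_1(\lambda)>0$, shows that for $\ep$ small $u_\lambda-\ep\phi$ is a nonnegative supersolution of \eqref{problem}; running the iteration below it produces a solution $\le u_\lambda-\ep\phi<u_\lambda$, contradicting minimality. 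Note this argument uses only $f\in C^\infty$ nondecreasing, not convexity.

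Finally, the extremal solution: by monotonicity $u^*:=\lim_{\lambda\nearrow\lambda^*}u_\lambda$ exists pointwise in $[0,+\infty]$. Testing the equation for $u_\lambda$ against $\varphi_1$ gives $\int_{B_1}f(u_\lambda)\varphi_1\,dx=(\mu_1^s/\lambda)\int_{B_1}u_\lambda\varphi_1\,dx$; fixing $K$ large and using superlinearity as $f(t)\ge Kt-C_K$ (admissible since $\mu_1^s/\lambda$ stays bounded for $\lambda\in[\lambda^*/2,\lambda^*)$), one absorbs the $u_\lambda$-term and gets a bound on $\int_{B_1}f(u_\lambda)\varphi_1\,dx$, hence on $\int_{B_1}u_\lambda\varphi_1\,dx$, uniform in $\lambda$. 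By monotone convergence $u^*,f(u^*)\in L^1(\varphi_1\,dx)$, and passing to the limit in \eqref{j1} — legitimate since $|(-\Delta)^{-s}\psi|\le C\varphi_1$ and $f(u_\lambda)\uparrow f(u^*)$ — shows $u^*$ is a weak solution. Nonexistence of a solution in $H\cap L^\infty(B_1)$ for $\lambda>\lambda^*$ is then immediate from the definition of $\lambda^*$ and the interval structure of the admissible set. I expect the main obstacles to be of two sorts: the background machinery for the spectrally-defined $(-\Delta)^s$ — comparison and strong maximum principles, $L^\infty$ regularity for (say $L^p$) data, and two-sided comparison with $\varphi_1$, all of which I would draw from Section \ref{prelim} and Lemma \ref{lemma1bcmr}; and, inside the proof proper, the semi-stability step and the $L^1(\varphi_1\,dx)$ estimate securing the extremal limit.
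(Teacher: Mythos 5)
Your proposal follows essentially the same strategy as the paper: sub/supersolutions starting from the pair $(0,\, (-\Delta)^{-s}1)$ to get $\lambda^*>0$ and the minimal branch, the $\varphi_1$-test combined with superlinearity for both $\lambda^*<\infty$ and the uniform $L^1(\varphi_1\,dx)$ bound on $u_\lambda$ and $f(u_\lambda)$, and monotone convergence to pass to the limit $u^*$ in the weak formulation \eqref{j1}. The one place where you go beyond the paper is the semi-stability of $u_\lambda$, which the paper dispatches with the parenthetical ``minimal (hence stable)'' and does not prove; your principal-eigenvalue/supersolution-contradiction argument is the standard way to fill that in and is correct in spirit.

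However, as written that extra step has a genuine flaw when $s<1/2$. The bound $\phi\le C\varphi_1$ does \emph{not} follow from Lemma~\ref{lemma1bcmr}: that lemma controls $(-\Delta)^{-s}\psi$ only for $\psi$ itself dominated pointwise by $\varphi_1$, which is automatic for $\psi\in C^\infty_c(B_1)$ but not for the bounded right-hand side $(\lambda f'(u_\lambda)+\mu_1(\lambda))\phi$. In fact Lemma~\ref{lemma boundary} says a solution with bounded data decays only like $\mathrm{dist}(\cdot,\partial B_1)^{2s}$ for $s<1/2$, which near $\partial B_1$ is \emph{larger} than $\varphi_1\approx\mathrm{dist}$, so $\phi\le C\varphi_1$ is simply false in general. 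What your argument really needs is $\ep\phi\le u_\lambda$ for small $\ep$, and that \emph{is} true since $u_\lambda\ge\lambda f(0)(-\Delta)^{-s}1$ has the matching $\mathrm{dist}^{2s}$ lower bound; but Lemma~\ref{bpl} as stated only gives a linear lower bound, so for $s<1/2$ you would need a sharpened barrier of order $\mathrm{dist}^{2s}$, which is not developed in the paper. You should therefore compare $\phi$ directly with $u_\lambda$ (two solutions with bounded, strictly positive data, hence comparable boundary decay) rather than routing both through $\varphi_1$. Apart from this, your proof lines up with the paper's.
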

For the proof, see Section \ref{existence}.  
\begin{remark}
Proposition \ref{exist}  remains true when $B_{1}$ is replaced by any smoothly bounded domain.
\end{remark}

\begin{remark} For $0<\lambda<\lambda^*$, the solution $u_{\lambda}$ is minimal in the sense that $u_{\lambda}\le u$ for any other weak solution $u$. In particular, 
$u_{\lambda}$ and $u^*$ are radial. In addition,
 $u_{\lambda}$ and $u^*$ are radially decreasing (see Section \ref{sec gnn}) and $u_{\lambda}~\in~C^\infty(B_{1})\cap C^{\alpha}(\overline{B_{1}})$ for  $\alpha \in (0,\min(2s,1))$ (see Section \ref{prelim}). If $u^*$ is bounded, then we also have $u^*~\in~C^\infty(B_{1})\cap C^{\alpha}(\overline{B_{1}})$ for  $\alpha \in (0,\min(2s,1))$, using again Section \ref{prelim}. 
\end{remark}

Here is our main result, concerning the regularity of the extremal solution $u^*$.

 \begin{theorem}\label{thm_extremal}
Assume $n\ge 2$ and let $u^*$ be the extremal
solution of \eqref{problem}. We have that:

\hspace{0.5cm} (a) If $n < 2 ( s + 2 + \sqrt{2(s+1)}) $ then $u^*\in L^\infty(B_1)$.

\hspace{0.5cm} (b) If $ n\ge 2 ( s + 2 + \sqrt{2(s+1)})$, then for any $\mu<n/2-1-\sqrt{n-1}-s$, 
there exists a constant $C>0$ such that $u^*(x)\le C\vert x\vert^{-\mu}$ for all $x\in B_1$.
\end{theorem}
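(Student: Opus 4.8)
The plan is to mimic the celebrated argument of Nedev and Cabré--Capella for the classical case $s=1$, adapted to the fractional setting via the Caffarelli--Silvestre-type extension (or, here, the spectral extension to the half-cylinder $B_1 \times (0,\infty)$). The semi-stability inequality \eqref{stability} is the only real tool: one chooses a cleverly-constructed test function and integrates. First I would recall that by Section \ref{prelim} the solution $u^*$ (and each $u_\lambda$) has an extension $w^*$ to the cylinder $\mathcal{C} = B_1 \times (0,\infty)$ solving a degenerate-elliptic equation $\operatorname{div}(t^{1-2s}\nabla w)=0$ with $\partial_\nu^s w = c_s \lambda f(u^*)$ on $B_1 \times \{0\}$, so that \eqref{stability} becomes $c_s\int_{\mathcal{C}} t^{1-2s}|\nabla \psi|^2 \ge \int_{B_1} f'(u^*)\psi^2$ for admissible $\psi$ depending on $(x,t)$, and the weak formulation gives $c_s\int_{\mathcal{C}} t^{1-2s}\nabla w^* \cdot \nabla \varphi = \lambda\int_{B_1} f(u^*)\varphi$.

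The key step is the choice of test function. Following Cabré--Capella, since $u^*$ is radial and radially decreasing I would use the radial derivative: set $\psi = |\nabla_x w^*|\,\zeta(|x|)$ (or more precisely work with $w^*_r$, the derivative in the radial $x$-direction, which satisfies a linearized equation), where $\zeta$ is a cutoff/test function of $r=|x|$ to be optimized. Plugging this into the stability inequality and using the linearized equation satisfied by $w^*_r$ — obtained by differentiating the extension problem in the radial direction, which produces an extra negative Hardy-type term $-\frac{n-1}{r^2}$ coming from the Laplacian in polar coordinates — one gets, after integrating by parts in the cylinder and discarding favorable boundary terms, an inequality of the form
\begin{equation*}
(n-1)\int_{B_1}\frac{f'(u^*)\,|\partial_r u^*|^2}{\,} \,\frac{\zeta^2}{r^2}\,dx \;\le\; \text{(gradient terms in }\zeta\text{)} \;-\; s\text{-weighted boundary contributions},
\end{equation*}
i.e.\ the extra $(n-1)$ factor is what forces the dimensional threshold. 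Choosing $\zeta(r) = r^{-a}$ near the origin and optimizing $a$, the admissible range of decay exponents turns out to be governed by the quadratic $a^2 + (2s+2)a - (n-1-\ldots)$, whose discriminant condition is exactly $n < 2(s+2+\sqrt{2(s+1)})$; this yields an $L^p$ bound on $\partial_r u^*$ near $0$, hence on $u^*$ itself by integrating in $r$, and then a bootstrap through the regularity theory of Section \ref{prelim} (elliptic estimates for $(-\Delta)^s$, $L^p \to L^\infty$) upgrades this to $u^* \in L^\infty(B_1)$ in case (a).

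In case (b), when the dimension is above the threshold, the same computation does not close to an $L^\infty$ bound but still yields $\partial_r u^* \in L^2(r^{2\beta}\,dx)$-type weighted estimates for a range of $\beta$, from which integrating $u^*(x) = -\int_{|x|}^1 \partial_r u^* \,dr$ and using Cauchy--Schwarz gives the pointwise decay $u^*(x) \le C|x|^{-\mu}$ for every $\mu < n/2 - 1 - \sqrt{n-1} - s$; the exponent $n/2-1-\sqrt{n-1}$ is the familiar one from the classical Cabré--Capella analysis, shifted by $-s$ because of the order of the operator. The main obstacle I anticipate is \emph{rigorously justifying the manipulations on the extension}: the test function $\psi = |\nabla_x w^*| \zeta$ is only Lipschitz (not smooth) where $\nabla_x w^* $ vanishes, the integrations by parts in the unbounded cylinder require decay control in $t$ and near $\partial B_1$, and one must approximate $u^*$ by the bounded minimal solutions $u_\lambda$ (for which everything is smooth), prove the stability and the differentiated equation hold uniformly, and pass to the limit — this is where the weak-solution framework of \cite{bcmr} and the precise function spaces $H^s$, $H^{1/2}_{00}$, $H^s_0$ enter, and where the radial monotonicity from Section \ref{sec gnn} is essential to control the sign of the boundary terms.
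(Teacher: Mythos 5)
Your outline correctly identifies the central mechanism, which is also the paper's: test the stability inequality \eqref{stability2} with $\xi = \eta v_\rho$ where $v$ is the cylinder extension of $u_\lambda$, use the linearized equation \eqref{eq:3-lm2} satisfied by $v_\rho$ (the Hardy term $(n-1)/\rho^2$ comes out exactly as you predict), and take $\eta \sim \rho^{1-\alpha}$ near the origin. This is precisely Lemma~\ref{lem:bdstab} and Proposition~\ref{key_lemma}, which give the weighted estimate $\int_{[\rho\le 1/2]} y^{1-2s} v_\rho^2 \rho^{-2\alpha}\,dxdy \le C$ for $1\le \alpha < 1+\sqrt{n-1}$, uniformly in $\lambda<\lambda^*$. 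The algebra producing $n<2(s+2+\sqrt{2(s+1)})$ from the constraints $\alpha < 1+\sqrt{n-1}$ and $\beta < n/2 + \alpha - s$, $\beta = n-2s$, is also as you suggest. The caveats you list (regularity of $v_\rho$, decay in $y$, working with $u_\lambda$ and passing $\lambda\to\lambda^*$, radial monotonicity) are genuine and the paper indeed handles them via Lemma~\ref{basic lemma}, Section~\ref{sec gnn}, and the regularity results of Section~\ref{prelim}.

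However, there is a real gap in your transition from the weighted estimate on $v_\rho$ to boundedness (or decay) of $u^*$. You propose to ``integrate in $r$'' to get an $L^p$ bound on $u^*$ and then bootstrap. This does not work as stated: the weighted estimate controls $\int_{\C} y^{1-2s} v_\rho^2\rho^{-2\alpha}\,dxdy$, a bulk quantity in the cylinder with the degenerate weight $y^{1-2s}$, and it does \emph{not} yield control of the trace $\partial_\rho u_\lambda(x) = v_\rho(x,0)$ in any weighted $L^2(B_1)$ norm — there is no trace theorem of that form for $y^{1-2s}$-weighted $L^2$ of the gradient alone. This is exactly the point where the fractional case departs from the classical Cabr\'e--Capella argument (where the stability estimate lives directly in $B_1$ and one can simply integrate $\partial_\rho u$). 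The paper instead converts the weighted bound on $v_\rho$ into a weighted bound on the nonlinearity, $\int_{B_1} f(u_\lambda)\rho^{-\beta}\,dx\le C$, by multiplying $\operatorname{div}(y^{1-2s}\nabla v)=0$ by $(\rho^2+y^2+\ep)^{-\beta/2}$, integrating over a truncated cylinder, splitting $I_3 = I_\rho + I_y$, controlling $I_\rho$ by Cauchy--Schwarz against the stability-derived estimate, and controlling $I_y$ by comparison with the free-space solution via the explicit kernel — producing the constant $\beta C_{n,s}A_{n,s,\beta}$, and requiring the nontrivial Lemma~\ref{lemma a} to guarantee the sign $1-\beta C_{n,s}A_{n,s,\beta}>0$. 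Only then does the pointwise bound on $u_\lambda$ follow from the Green's representation estimate \eqref{bnd_urho}, $u_\lambda(0)\le C_{n,s}\int_{B_1} |x|^{2s-n}f(u_\lambda)\,dx$. Without this conversion step and Lemma~\ref{lemma a}, your argument does not close, and the same gap affects part (b), which relies on the pointwise kernel estimate to pass from $f(u_\lambda(\rho))\le C\rho^{\beta-n}$ to $u_\lambda(x)\le C|x|^{\beta+2s-n}$.
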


\begin{remark}
In particular, for any $2\le n\le6$, any $s\in(0,1)$, and any smooth nondecreasing $f$ such that \eqref{Hyp_f} holds, the extremal solution is always bounded. 
\end{remark}

\begin{remark}
We do not know if the bound $n<2(s+2+\sqrt{2(s+1)})$ is optimal for the regularity of $u^*$. We note however that $\lim_{s\to 1^-}2(s+2+\sqrt{2(s+1)})=10$, and that the extremal solution of 
\begin{align}\label{gelfand2} 
\left\{
\begin{aligned}
-\Delta u &= \lambda f(u) && \hbox{in $\Omega$}
\\
u &= 0 && \hbox{on $\partial \Omega$}.
\end{aligned}
\right.
\end{align} 
is singular when $\Omega=B_{1}$, $f(u)=e^{u}$, and $n=10$ (see e.g. \cite{joseph-lundgren}).
\end{remark}

Nonlinear equations involving fractional powers of the Laplacian are currently actively studied. 
Caffarelli, Salsa and Silvestre studied free boundary problems for such operators in \cites{cafS,caffarelli-salsa-silvestre}.
Cabr\'e and Tan \cite{cabre-tan} obtained several results in analogy with the classical Lane-Emden problem $-\Delta u = u^p$, posed on bounded domains and entire space, such as the role of the critical exponent. Previously, some authors considered elliptic equations with nonlinear Neumann boundary condition, which share some properties with semilinear equations of the form \eqref{problem}, see e.g. \cite{cabre-solamorales,davila-dupaigne-montenegro}.

Equation \eqref{problem} is the fractional Laplacian version of the classical semilinear elliptic equation \eqref{gelfand2}. 
When $f(u) = e^u$, \eqref{gelfand2} is known as the Liouville equation \cite{liouville} or the Gelfand problem \cite{gelfand}.
Joseph and Lundgren \cite{joseph-lundgren} showed in this case
 that if $\Omega$ is a ball, then the extremal solution $u^*$ of
\eqref{gelfand2}
is bounded if and
only if $n<10$. Crandall and Rabinowitz \cite{crandall-rabinowitz} and Mignot and Puel \cite{mignot-puel} proved that if
$f(u)=e^u$ and $n<10$ then for { any} smoothly bounded
domain $\Omega$, $u^*$ is bounded. Using Hardy's inequality, Brezis and V\'azquez \cite{brezis-vazquez}
provided a different proof that $u^*$ is singular when $\Omega=B_1$ and $n \ge 10$.
For some other explicit nonlinearities, such as $f(u)=(1+u)^p$ with $p>1$ or $p<0$,
the critical dimension for the regularity of the extremal solution is known 
(for further details see the above mentioned references).
For general nonlinearities, Nedev \cite{nedev} proved that for any convex function $f$
satisfying \eqref{Hyp_f}, and any smooth bounded
domain  $\Omega\subset\R^n$, $n \le 3$, $u^*$ is bounded. This result has
been extended by Cabr\'e to the case $n=4$ and $\Omega$ strictly
convex \cite{cabre}. Finally, Cabr\'e and Capella
\cite{CC04} showed that if $\Omega$ is a ball and $n\le
9$ then for any nonlinearity $f$ satisfying \eqref{Hyp_f},
the extremal solution is bounded. 


%



\section{Preliminaries} \label{prelim}
\subsection{Functional spaces}
We start by recalling some functional spaces, see for instance \cite{lions-magenes,tartar}.
For $s \ge 0$, $H^s(\R^n)$ is defined as
$$
H^s(\R^n) = \{ u \in L^2(\R^n): |\xi|^s \hat u(\xi) \in L^2(\R^n) \}
$$
where $\hat u$ denotes the Fourier transform of $u$, with norm
$$
\| u \|_{H^s(\R^n)} = \| (1+ |\xi|^s) \hat u(\xi) \|_{L^2(\R^n)} .
$$
This norm is equivalent to
$$
\| u \|_{L^2(\R^n)} + \left( \int_{\R^n} \int_{\R^n} \frac{|u(x)-u(y)|^2}{|x-y|^{n+2s}} d x \, d y \right)^{1/2} .
$$

Given a smooth bounded domain $\Omega\subset\R^n$ and $0<s<1$, the space $H^s(\Omega)$ is defined as the set of functions $u \in L^2(\Omega)$ for which the following norm is finite
$$
\| u \|_{H^s(\Omega)} = \| u \|_{L^2(\Omega)} +
\left(
\int_{\Omega} \int_{\Omega} \frac{|u(x)-u(y)|^2}{|x-y|^{n+2s}} d x \, d y
\right)^{1/2} .
$$
An equivalent construction consists of restrictions of functions in $H^s(\R^n)$.
We define $H_0^s(\Omega)$ as the closure of $C_c^\infty(\Omega)$ with respect to the norm $\| \cdot \|_{H^s(\Omega)} $.
It is well known that for $0<s\le\frac{1}{2}$, $H_0^s(\Omega) = H^s(\Omega)$, while for $1/2<s<1$ the inclusion $H_0^s(\Omega) \subseteq H^s(\Omega)$ is strict (see Theorem~11.1 in
\cite{lions-magenes}).

The space $H$ defined in \eqref{defH} is the interpolation space $(H_0^2(\Omega),L^2(\Omega))_{s,2}$,
see for example \cite{lions-magenes,adams,tartar}.
Here we follow the notation from \cite[Chap. 22]{tartar}.
J.-L. Lions and E. Magenes \cite{lions-magenes} showed that  $(H_0^2(\Omega),L^2(\Omega))_{s,2} = H_0^s(\Omega)$ for $0<s<1$, $s\not=1/2$, while
$$
(H_0^2(\Omega),L^2(\Omega))_{1/2,2} = H_{00}^{1/2}(\Omega)
$$
where
$$H_{00}^{1/2}(\Omega) = \{u\in H^{1/2}(\Omega) \; : \; \int_{\Omega}\frac{u(x)^2}{d(x)}\;dx<+\infty\},$$
and $d(x) = {\rm dist}(x,\partial \Omega)$ for all $x\in \Omega$.

An important feature of the operator $(-\Delta)^s$ is its nonlocal character, which is best seen by realizing the fractional Laplacian as the boundary operator of a suitable extension in the half-cylinder $\Omega\times (0,\infty)$. Such an interpretation was demonstrated in \cite{cafS} for the fractional Laplacian in $\R^n$. Their construction can easily be extended to the case of bounded domains as described below.

Let us define
\begin{align*}
\C &= \Omega\times(0,+\infty),\\
\partial_L \C&=\partial \Omega \times [0,+\infty ).
\end{align*}
We write points in the cylinder using the notation $(x,y)\in \C=\Omega \times (0,+\infty)$.

Given $s\in(0,1)$, 
consider the space $H^1_{0,L}(y^{1-2s})$ of measurable functions $v:\C \to \R$ such that $v\in H^1(\Omega\times(s,t))$ for all $0<s<t<+\infty$, $v=0$ on $\partial_L \C$ and for which the following norm is finite
\begin{eqnarray*}
\| v \|_{H_{0,L}^1(y^{1-2s})}^2= \int_{\C}y^{1-2s}\left| \nabla v \right|^2 \; dxdy.
\end{eqnarray*}

\begin{prop} \label{trace theorem} There exists a trace operator from $H_{0,L}^1(y^{1-2s})$ into $H_0^s(\Omega)$. Furthermore, the space $H$ given by \eqref{defH} is characterized by
\begin{align*}
H &=
\{u = tr_{\Omega}v \; : \; v\in H_{0,L}^1(y^{1-2s})\} .
\end{align*}
\end{prop}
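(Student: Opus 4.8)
The plan is to mimic the Caffarelli–Silvestre extension argument, but adapted to a bounded domain by expanding in the Dirichlet eigenbasis $\{\varphi_k\}$ of $-\Delta$ on $B_1$ (or on a general $\Omega$). First I would observe that, since $v\in H^1(\Omega\times(a,b))$ for every $0<a<b<\infty$ and $v=0$ on $\partial_L\C$, for each fixed $y>0$ the slice $v(\cdot,y)$ lies in $H^1_0(\Omega)$ and hence can be written as $v(x,y)=\sum_{k}v_k(y)\varphi_k(x)$ with $v_k(y)=\int_\Omega v(x,y)\varphi_k(x)\,dx$. Plugging this into the weighted Dirichlet energy and using $\|\nabla_x\varphi_k\|_{L^2}^2=\mu_k$ gives
\begin{equation*}
\|v\|_{H^1_{0,L}(y^{1-2s})}^2=\sum_{k=1}^\infty\int_0^\infty y^{1-2s}\bigl(|v_k'(y)|^2+\mu_k|v_k(y)|^2\bigr)\,dy,
\end{equation*}
which decouples the energy into a sum of one–dimensional weighted problems.

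Next I would analyze each one–dimensional problem. Among all $w$ with $w(0)=1$ and finite value of $\int_0^\infty y^{1-2s}(|w'|^2+\mu|w|^2)\,dy$, the minimizer solves the Bessel-type ODE $(y^{1-2s}w')'=\mu y^{1-2s}w$, whose decaying solution is $w(y)=c_s(\sqrt\mu\, y)^s K_s(\sqrt\mu\, y)$ normalized so that $w(0)=1$; one computes that the minimal energy equals $\kappa_s\,\mu^s$ for an explicit constant $\kappa_s>0$ depending only on $s$ (this is exactly the computation behind the fact that the Dirichlet-to-Neumann map of the extension is $(-\Delta)^s$ up to a constant). Consequently, for a general $v$ with slices $v_k(y)$ and traces $u_k:=v_k(0)$,
\begin{equation*}
\|v\|_{H^1_{0,L}(y^{1-2s})}^2\ \ge\ \kappa_s\sum_{k=1}^\infty\mu_k^s|u_k|^2=\kappa_s\|u\|_H^2,
\end{equation*}
with equality when each $v_k$ is the corresponding Bessel minimizer. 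This simultaneously shows that the trace operator $tr_\Omega$ maps $H^1_{0,L}(y^{1-2s})$ boundedly into $H$, and that it is onto: given $u=\sum u_k\varphi_k\in H$, the function $v(x,y)=\sum_k u_k\, w_{\mu_k}(y)\varphi_k(x)$ built from the Bessel minimizers lies in $H^1_{0,L}(y^{1-2s})$ with $\|v\|^2=\kappa_s\|u\|_H^2<\infty$ and $tr_\Omega v=u$. Finally, since $H=H^s_0(\Omega)$ for $s\neq 1/2$ and $H=H^{1/2}_{00}(\Omega)$ for $s=1/2$ by the interpolation identities recalled above, the first assertion "there exists a trace operator into $H^s_0(\Omega)$" follows (for $s>1/2$ directly, and for $s\le 1/2$ using $H^s_0=H^s$ together with the continuous inclusion $H\hookrightarrow H^s(\Omega)$).

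The main technical obstacle I anticipate is making the interchange of trace, sum, and integral fully rigorous: one must justify that $v_k\in H^1_{\mathrm{loc}}((0,\infty))$ with the stated weighted energy, that $v_k$ has a well-defined trace at $y=0$ belonging to the right space, and that the truncated sums converge in $H^1_{0,L}(y^{1-2s})$ — the weight $y^{1-2s}$ degenerates (or blows up) at $y=0$, so one-dimensional trace/Hardy-type inequalities in the weighted space $\int_0^\infty y^{1-2s}(\cdot)\,dy$ are needed, and these require care near $y=0$ (where $1-2s$ can be negative). A secondary point is handling the boundary behavior on $\partial_L\C$: the slicewise membership $v(\cdot,y)\in H^1_0(\Omega)$ should be deduced from $v=0$ on $\partial_L\C$ by a Fubini argument valid for a.e. $y$, which then suffices for the eigenfunction expansion. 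Once the one–dimensional weighted trace theory is in place, the eigenfunction decomposition reduces everything to the explicit Bessel computation, and the proposition follows.
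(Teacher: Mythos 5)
Your surjectivity argument (given $u\in H$, build the Bessel extension $v=\sum_k u_k\varphi_k(x)g_k(y)$ and check its energy is finite) is essentially the same as the paper's. Where you genuinely diverge is the boundedness of the trace: the paper does not use the eigenfunction decomposition for that direction at all, but instead cites a trace theorem of J.-L.\ Lions giving $\|v(\cdot,0)\|_{H^s(\R^n)}^2\le C\int y^{1-2s}\bigl(v^2+|\nabla v|^2\bigr)\,dx\,dy$, and then uses Poincar\'e plus extension by zero to land in $H^s(\Omega)$, with a short approximation argument to get into $H^s_0(\Omega)$ when $1/2<s<1$. You instead derive the trace bound from scratch by decoupling the weighted energy mode-by-mode, $\|v\|^2=\sum_k\int_0^\infty y^{1-2s}(|v_k'|^2+\mu_k|v_k|^2)\,dy$, and using that the one-dimensional Bessel profile minimizes each summand, which gives $\|v\|^2\ge\kappa_s\|tr_\Omega v\|_H^2$ directly. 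Your route is more self-contained and produces the sharp constant, but it requires the weighted one-dimensional trace facts near $y=0$ and a Fubini/Parseval justification of the decoupling, which you correctly flag as the technical work; the paper's route delegates exactly that work to the Lions citation and gets, for free, a trace in $H^s(\R^n)$, convenient for the Sobolev embedding used later. It is worth noting that the paper records, in the remark \emph{after} its proof, precisely the variational characterization of the canonical extension together with the identity $\|u\|_H^2=c\|v\|^2_{H^1_{0,L}(y^{1-2s})}$; your proposal in effect promotes that remark to the proof of the proposition itself, which is a legitimate and arguably cleaner organization. Both approaches are correct.
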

\proof
For the case $s=1/2$ see Proposition 2.1 in \cite{cabre-tan}.

We consider now $s\neq1/2$.
Restating the results of Paragraph 5 of J.-L.~Lions  \cite{lions}, there exists a constant $C>0$ such that
$$
\| v(\cdot,0) \|_{H^s(\R^n)}^2 \le C \int_{\R^n \times (0,+\infty) }y^{1-2s}\left(v^2+\left| \nabla v \right|^2\right) \; dxdy,
$$
whenever the right-hand side in the above inequality is finite. Now for any $v\in H_{0,L}^1(y^{1-2s})$,
$$
\int_{\C}y^{1-2s} v^2 \; dxdy\le C\int_{\C}y^{1-2s}\left| \nabla v \right|^2 \; dxdy,
$$
as follows from the standard Poincar\'e inequality in $\Omega$. Hence, extending $v$ by zero outside $\C$, we deduce that
$$
\| v(\cdot,0) \|_{H^s(\Omega)} \le C \| v \|_{H_{0,L}^1(y^{1-2s})}.
$$
This inequality shows that there exists a linear bounded trace operator
$$
tr_{\Omega}:H_{0,L}^1(y^{1-2s})\to H^s(\Omega).
$$
This operator has its image contained in $H_0^s(\Omega)$. This is direct for $0<s<1/2$ because in this case $H_0^s(\Omega)= H^s(\Omega)$. If $1/2<s<1$ we argue that any $v \in H_{0,L}^1(y^{1-2s})$ can be approximated by functions in $H_{0,L}^1(y^{1-2s})$ that have support away from $\partial_L \C$.
The trace of any such function has compact support in $\Omega$ and is therefore in $H_0^s(\Omega)$.
In all cases, this implies that the image of the trace operator is contained in $H$.

Let us prove $tr_{\Omega}: H_{0,L}^1(y^{1-2s}) \to H $ is surjective. Take a function $u\in H$ and let us prove that there exists $v\in H_{0,L}^1(y^{1-2s})$ such that $tr_{\Omega}(v)=u$. Write its spectral decomposition
$
u(x)=\sum_{k=1}^{+\infty} b_{k}\varphi_{k}(x)
$
and consider the function
\begin{equation} \label{juan1}
v(x,y) = \sum_{k=1}^{+\infty}b_{k}\varphi_{k}(x)g_{k}(y),
\end{equation}
where $g_{k}$ satisfies
\begin{align}
\label{bessel ode}
& g_{k}'' + \frac{1-2s}{y} g_{k}' - \mu_{k}g_{k} = 0\quad\text{in $(0,+\infty)$}
\\
\label{bessel ode 2} 
& g_{k}(0)=1 \qquad
g_{k}(+\infty)=0.
\end{align}
This ODE is a Bessel equation. Two independent solutions are given by $y^s I_{s}(\sqrt{\mu_{k}}y)$ and $y^s K_{s}(\sqrt{\mu_{k}}y)$, where $I_{s}, K_{s}$ are the modified Bessel functions of the first and second kind, see \cite{abramowitz-stegun}. Since $I_{s}$ increases exponentially at infinity and $K_{s}$ decreases exponentially, the solution we are seeking has the form
$$
g_{k}(y) = c_{k}y^s K_{s}(\sqrt{\mu_{k}}y).
$$
It is well-known that $K_{s}(t) = a t^{-s} +o(t^{-s})$ as $t\to0$, where $a>0$. Therefore, one can choose $c_{k}$ such that $g_{k}(0)=1$ and one can see that $g_{k}$ can be written in the form
$$
g_{k}(y) = h(\sqrt{\mu_{k}}y),
$$
for a fixed function $h$ that verifies $h(0)=1$ and $h'(t)=-c t^{2s-1}+o(t)$ as $t\to 0$, for some constant $c=c_{n,s}>0$ depending only on $s$ and $n$. This implies that
\begin{align}
\label{formula g p}
\lim_{y\to 0^+} -y^{1-2s}g_{k}'(y) = c_{n,s} \mu_k^{s}.
\end{align}
Since each of the functions $g_k$ decreases exponentially at infinity we see that $v$
defined by \eqref{juan1} is smooth for $y>0$, $x\in \Omega$ and moreover satisfies
$$
{\rm div}\, (y^{1-2s} \nabla v)=0 \qquad
{\mbox{ in $\C$}} .
$$
Let us check that $v\in H_{0,L}^1(y^{1-2s})$.
For any $y>0$, by the properties of $\varphi_k$:
$$
\int_{\Omega} |\nabla v(x,y)|^2 \, d x =
\sum_{k=1}^{\infty} b_k^2 ( \mu_k g_k(y)^2 + g_k'(y)^2)
$$
Integrating with respect to $y$ over $(\delta,+\infty)$ where $\delta>0$:
\begin{align}
\label{int delta}
\int_\delta^\infty \int_{\Omega} y^{1-2s}|\nabla v(x,y)|^2 \, d x d y
= \sum_{k=1}^{\infty} b_k^2 (- y^{1-2s} g_k'(y) g_k(y) )\Big|_{y=\delta}.
\end{align}
From the ODE \eqref{bessel ode} we deduce that $g_k\ge 0$, $g_k'\le 0$ and $g_k'(y) y^{1-2s}$ is non-decreasing. Thus, if $\delta_i \downarrow 0$, $i\to\infty$ is a decreasing sequence,
$- \delta_i^{1-2s} g_k'(\delta_i) g_k(\delta_i) $ is increasing. By monotone convergence and thanks to \eqref{formula g p} we deduce
$$
\int_0^\infty \int_{\R^n} y^{1-2s} |\nabla v(x,y)|^2 \, d x d y = c_{n,s}\sum_{k=1}^{+\infty} b_{k}^2\mu_{k}^s.
$$
This proves that $H \subseteq tr_{\Omega}( H_{0,L}^1(y^{1-2s}) ) $.
\qed

\bigskip

Let us remark that if $u \in H$, then
the minimization problem
$$
\min_{} \left\{
\int_{\C} y^{1-2 s} |\nabla v|^2 \;dxdy:
v \in H_{0,L}^1(y^{1-2s}), \  tr_{\Omega}(v) = u
\right\}
$$
has a solution $v \in H_{0,L}^1(y^{1-2s})$, by the weak lower semi-continuity of the norm $\| \ \|_{ H_{0,L}^1(y^{1-2s})}$ and continuity of $tr_{\Omega}$. Moreover the minimizer $v$ is unique, which follows e.g.\@ from the strict convexity of the functional. By standard elliptic theory $v(x,y)$ is smooth for $y>0$ and satisfies
\begin{align*}
\left\{
\begin{aligned}
{\rm div}\, (y^{1-2s} \nabla v) & =0 &&
\mbox{in $\C$}
\\
v&=0 && \mbox{on } \partial_L \C
\\
v&=u && \mbox{on } \Omega \times \{ 0 \}
\end{aligned}
\right.
\end{align*}
where the boundary condition on $\Omega \times \{ 0 \}$ is in the sense of trace.
For each $y>0$ we may write $v(x,y) = \sum_{k=1}^\infty \varphi_k(x) g_k(y)$
where $g_k(y) = \int_{\Omega} v(x,y) \, d x$.
Since $v(\cdot,y) \to u$ in $L^2(\Omega)$ as $y\to 0$,
$g_k(0)$ are the Fourier coefficients of $u$, that is $u = \sum_{k=1}^\infty g_k(0) \varphi_k$.
Then we deduce that $g_k(y)$ is smooth for $y>0$ and satisfies the ODE \eqref{bessel ode}. One can check that $g_k(y) \to 0 $ as $y\to+\infty$ and therefore $g_k(y) = c_k y^s K_s(\sqrt{\mu_k} y)$ for all $y>0$ and some $c_k \in \R$. Then, similarly as in \eqref{int delta},  we obtain for $\delta>0$
\begin{align}
\label{int delta 2}
\int_\delta^\infty \int_{\R^n} y^{1-2s}|\nabla v(x,y)|^2 \, d x d y
= \sum_{k=1}^{\infty}  (- y^{1-2s} g_k'(y) g_k(y) )\Big|_{y=\delta}.
\end{align}
Arguing as before, for each $k$
$$
\lim_{y\downarrow0}  (- y^{1-2s} g_k'(y) g_k(y) ) = c \mu_k^s g_k(0)^2 .
$$
We deduce from \eqref{int delta 2} that
$$
\|u\|_H^2 = \sum_{k=1}^\infty \mu_k^s g_k(0) = c \| v \|_{H_{0,L}^1(y^{1-2s})}^2.
$$
In what follows we will call $v$ the {\em canonical extension} of $u$.

\subsection{Solvability for data in $H^{-s}(\Omega)$}


This section is devoted to prove the following lemma:
\begin{lemma}
Let $h \in H'$. Then, there is a unique $u \in H$ which solves
\begin{equation} \label{linear problem} 
\left\{
\begin{array}{ll}
(-\Delta)^{s}u = h&\text{in }  \Omega
\\
u=0&\text{on } \partial \Omega.
\end{array}
\right.
\end{equation}
Moreover $u$ is the trace of $v \in H_{0,L}^1(y^{1-2s})$,	 where $v$ is the unique solution to
\begin{align}
\label{extension}
\left\{
\begin{aligned}
{\rm div}\, (y^{1-2s} \nabla v) & =0 &&
\mbox{in $\C$}
\\
v&=0 && \mbox{on } \partial_L \C
\\
- y^{1-2s} v_y&=c_{n,s} h && \mbox{on } \Omega \times \{ 0 \}
\end{aligned}
\right.
\end{align}
where $c_{n,s}>0$ is a constant depending on $n$ and $s$ only.
\end{lemma}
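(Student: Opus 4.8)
The plan is to solve the linear problem \eqref{linear problem} by a direct variational argument on the extension side, using the trace characterization of $H$ from Proposition \ref{trace theorem}, and then reading off the equation $(-\Delta)^s u = h$ from the Neumann datum of the extension. First I would set up the bilinear form $a(v,w) = \int_{\C} y^{1-2s}\nabla v\cdot\nabla w\,dxdy$ on $H^1_{0,L}(y^{1-2s})$, which by the Poincar\'e inequality in $\Omega$ (already invoked in the proof of Proposition \ref{trace theorem}) is coercive and bounded, and the linear functional $w\mapsto c_{n,s}\langle h, tr_\Omega w\rangle_{H',H}$, which is bounded on $H^1_{0,L}(y^{1-2s})$ because $tr_\Omega$ is continuous into $H$. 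Lax--Milgram then gives a unique $v\in H^1_{0,L}(y^{1-2s})$ with $a(v,w) = c_{n,s}\langle h, tr_\Omega w\rangle$ for all $w$; equivalently $v$ is the unique minimizer of $\frac12 a(v,v) - c_{n,s}\langle h, tr_\Omega v\rangle$.

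Next I would interpret this weak formulation. Testing with $w\in C^\infty_c(\C)$ gives ${\rm div}(y^{1-2s}\nabla v)=0$ in $\C$ in the distributional sense, hence $v$ is smooth in $\{y>0\}$ by standard (degenerate) elliptic regularity for the $A_2$-weight $y^{1-2s}$; testing with $w$ vanishing near $\Omega\times\{0\}$ but not on $\partial_L\C$ is automatic since such $w$ lie in $H^1_{0,L}$ and the lateral boundary condition $v=0$ on $\partial_L\C$ is built into the space. For the remaining $w$, an integration by parts in $y$ (justified, as in the proof of Proposition \ref{trace theorem}, by expanding $v=\sum_k g_k(y)\varphi_k(x)$ and using the monotone behaviour of $-y^{1-2s}g_k'g_k$ together with \eqref{formula g p}) identifies the boundary term $\lim_{y\to0^+}\int_\Omega -y^{1-2s}v_y\, w\,dx$ with $c_{n,s}\langle h, tr_\Omega w\rangle$, which is exactly the Neumann condition $-y^{1-2s}v_y = c_{n,s}h$ on $\Omega\times\{0\}$. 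This shows $v$ solves \eqref{extension}, and uniqueness for \eqref{extension} follows from coercivity of $a$.

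Finally I would check that $u := tr_\Omega v \in H$ solves \eqref{linear problem}. Expanding $v = \sum_k g_k(y)\varphi_k(x)$ with $g_k$ solving the Bessel ODE \eqref{bessel ode} and decaying at infinity, one has $g_k(y) = c_k y^s K_s(\sqrt{\mu_k}y)$ and, by \eqref{formula g p}, $\lim_{y\to0^+}(-y^{1-2s}g_k'(y)) = c_{n,s}\mu_k^s g_k(0)$; comparing with the Neumann datum gives $c_{n,s}\mu_k^s u_k = c_{n,s} h_k$ for every $k$, where $u_k = g_k(0)$ and $h_k$ are the spectral coefficients of $h$. Hence $\mu_k^s u_k = h_k$ for all $k$, i.e. $(-\Delta)^s u = h$ in the sense of the spectral definition of the operator; that $u\in H$ is immediate from $\|u\|_H^2 = \sum \mu_k^s u_k^2 = c\,a(v,v)<\infty$ as recorded after Proposition \ref{trace theorem}. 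Uniqueness of $u$ in $H$ follows because $(-\Delta)^s:H\to H'$ is an isometric isomorphism, as stated in the introduction.

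The main obstacle is the rigorous justification of the boundary integration by parts that turns the variational identity into the pointwise Neumann condition $-y^{1-2s}v_y = c_{n,s}h$: one must control the limit $y\to0^+$ of $\int_\Omega -y^{1-2s}v_y(x,y)\,w(x,0)\,dx$ and show it equals the $H'$--$H$ pairing, handling the degeneracy of the weight and the fact that $h$ is only in $H'$ rather than, say, $L^2$. The cleanest route, which I would follow, is to bypass a direct limit argument and instead work entirely through the spectral decomposition as above, where the Bessel asymptotics \eqref{formula g p} already encode the correct normalization constant $c_{n,s}$; the PDE formulation \eqref{extension} is then recovered a posteriori from the resulting identity $\mu_k^s u_k = h_k$.
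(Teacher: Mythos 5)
Your proposal is correct and uses the same essential ingredients as the paper: the Lax--Milgram theorem for the weighted extension problem, the spectral/Bessel decomposition $v = \sum_k g_k(y)\varphi_k(x)$ with $g_k$ solving \eqref{bessel ode}, and the normalization identity \eqref{formula g p} linking the Neumann trace to $\mu_k^s$. The only organizational difference is that the paper first observes spectrally that $(-\Delta)^s\colon H\to H'$ is an isomorphism, then verifies the extension statement for a single eigenmode $h=\varphi_k$ via the explicit formula $v=\mu_k^{-s}\varphi_k(x)g_k(y)$ and concludes by linearity and density, whereas you solve the extension problem for general $h$ directly by Lax--Milgram and then read off the spectral relation $\mu_k^s u_k = h_k$ from the Bessel asymptotics --- a reordering that sidesteps the pointwise limit argument you flag as the delicate point, exactly as the paper does.
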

\begin{remark}
Equation  \eqref{extension} is understood in the sense that 
$v\in H_{0,L}^1(y^{1-2s})$ and 
\begin{equation} \label{weak formulation} 
c_{n,s}\langle h, tr_{\Omega}(\zeta)\rangle_{H',H}
 = \int_\C y^{1-2s}\nabla v \nabla \zeta\;dxdy \qquad \hbox{for all } \zeta \in H_{0,L}^1(y^{1-2s}).
\end{equation}
The constant $c_{n,s}$ is the same constant appearing in \eqref{formula g p}. 
\end{remark}

\proof
The case $s=1/2$ was treated in \cite{cabre-tan}. 

The space $H'$ can be identified with the space of distributions $h = \sum_{k=1}^\infty h_k \varphi_k$ such that $\sum_{k=1}^\infty h_k^2 \mu_k^{-s} < \infty$.
Then, it is straight forward to verify that  for any $h \in H'$ there is a unique $u \in H$ such that $(-\Delta)^s u = h$. Fix now $h=\varphi_{k}$ for some $k\ge1$ and let $u=\mu_{k}^{-s}\varphi_{k}$, so that $(-\Delta)^s u=h$. 
By the Lax-Milgram theorem, there is a unique  $v \in H_{0,L}^1(y^{1-2s})$ such that
\eqref{weak formulation} holds. 
Letting $g_{k}$ denote the unique solution of \eqref{bessel ode}--\eqref{bessel ode 2},
by a direct computation, we find that 
$$
v (x,y)=  \mu_{k}^{-s}\varphi_{k}(x)g_{k}(y)
$$
solves \eqref{weak formulation}, with $h=\varphi_{k}$ and its trace is given by $\mu_{k}^{-s}\varphi_{k}=u$. This proves the lemma in the case $h=\varphi_{k}$. By linearity and density, the same holds true for any $h\in H'$.
\hfill\qed

\subsection{Maximum principles}

\begin{lemma} \label{mp}
Let $n\ge 1$ and $\Omega\subset\R^n$ any bounded open set. Take $h\in H'$ and let $u\in H$ de the corresponding solution of \eqref{linear problem}. Let also $v\in H^1_{L}(y^{1-2s})$ denote the canonical extension of $u$. 

If $h\ge0$ a.e. in $\Omega$, then $u\ge0$ a.e. in $\Omega$ and $v\ge0$ in $\C$.  
\end{lemma}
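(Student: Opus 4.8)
The key point is that this maximum principle is invisible at the level of the spectral decomposition: since the Dirichlet eigenfunctions $\varphi_k$, $k\ge2$, change sign, non-negativity of $h=\sum_k h_k\varphi_k$ says nothing directly about the signs of $u=\sum_k\mu_k^{-s}h_k\varphi_k$. So the plan is to pass to the local extension problem \eqref{extension} and run a Stampacchia-type argument there. By the preceding lemma, the canonical extension $v$ of $u$ is exactly the unique solution of \eqref{extension}; hence $v\in H^1_{0,L}(y^{1-2s})$, $tr_\Omega v=u$, and
\[
c_{n,s}\langle h,tr_\Omega(\zeta)\rangle_{H',H}=\int_\C y^{1-2s}\nabla v\cdot\nabla\zeta\;dxdy\qquad\text{for all }\zeta\in H^1_{0,L}(y^{1-2s}).
\]
I would then test this identity with $\zeta=v^-:=\max(-v,0)$.

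First one must check that $v^-$ is an admissible test function. Since $v\in H^1(\Omega\times(a,b))$ for all $0<a<b$, the standard truncation rule gives $v^-\in H^1(\Omega\times(a,b))$ with $\nabla v^-=-\mathbf{1}_{\{v<0\}}\nabla v$ a.e.; consequently $\int_\C y^{1-2s}|\nabla v^-|^2\le\int_\C y^{1-2s}|\nabla v|^2<\infty$, and $v^-=0$ on $\partial_L\C$ because $v=0$ there, so $v^-\in H^1_{0,L}(y^{1-2s})$. Moreover $\nabla v\cdot\nabla v^-=-|\nabla v^-|^2$ a.e. in $\C$, so inserting $\zeta=v^-$ yields
\[
c_{n,s}\langle h,tr_\Omega(v^-)\rangle_{H',H}=-\int_\C y^{1-2s}|\nabla v^-|^2\;dxdy.
\]
The right-hand side is $\le0$. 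The left-hand side is $\ge0$: $c_{n,s}>0$; the trace operator preserves non-negativity (along a subsequence $v(\cdot,y)\to tr_\Omega v$ a.e. in $\Omega$ as $y\to0^+$), so $tr_\Omega(v^-)\ge0$ a.e.; and for $h\ge0$ a.e. and $\phi\in H$ with $\phi\ge0$ a.e. one has $\langle h,\phi\rangle_{H',H}\ge0$, by approximating $\phi$ in $H$ by non-negative functions in $C_c^\infty(\Omega)$. Hence both sides vanish, $\int_\C y^{1-2s}|\nabla v^-|^2=0$, and since $\|\cdot\|_{H^1_{0,L}(y^{1-2s})}$ is a genuine norm (weighted Poincar\'e inequality, Section~\ref{prelim}) we conclude $v^-\equiv0$, i.e.\ $v\ge0$ in $\C$; taking traces, $u=tr_\Omega v\ge0$ a.e. in $\Omega$.

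I expect the only delicate step to be the truncation in the weighted space: verifying that $v^-$ genuinely belongs to $H^1_{0,L}(y^{1-2s})$ with the expected gradient (so that it is admissible in \eqref{weak formulation}) and that the weighted Dirichlet norm of $v^-$ controls $v^-$ in $L^2(y^{1-2s})$. Everything else — the sign of the duality pairing and the positivity of the trace — is routine. If one prefers to sidestep the pairing subtlety for general $h\in H'$, one can first prove the statement for $h\in L^2(\Omega)$ with $h\ge0$, where the left-hand side above is simply $c_{n,s}\int_\Omega h\,tr_\Omega(v^-)\,dx\ge0$, and then pass to arbitrary $h\in H'$, $h\ge 0$, by density together with the continuous dependence of $u$ on $h$.
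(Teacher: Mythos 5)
Your proposal is exactly what the paper does: the paper's entire proof of Lemma~\ref{mp} is ``Simply use $v^-$ as a test function in \eqref{weak formulation}.'' You have filled in the routine details (admissibility of $v^-$ in $H^1_{0,L}(y^{1-2s})$, the sign of the duality pairing, Poincar\'e) correctly, so there is nothing further to compare.
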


\proof
Simply use $v^-$ as a test function in \eqref{weak formulation}. 
\hfill\qed

\begin{lemma} \label{smp} Let $\Omega\subset\R^n$ denote any domain and take $R>0$. Let $v$ denote any locally integrable function on $\Omega\times(0,R)$ such that
$$
\int_{\Omega\times(0,R)}y^{1-2s}\vert\nabla v\vert^2\;dxdy<+\infty.
$$  
Assume in addition that 
$$
-\nabla\cdot(y^{1-2s}\nabla v)=0\qquad\text{in $\Omega\times(0,R)$},
$$
$v\ge0$ in $\Omega\times(0,R)$, and $\left.-y^{1-2s}v_{y}\right\vert_{y=0}\ge0$ in $\Omega$ in the sense that
$$
 \int_{\Omega\times(0,R)}
y^{1-2s}\nabla v\cdot\nabla\zeta\;dxdy\ge0
$$
for all $\zeta\in H^1(y^{1-2s},\Omega\times(0,R))$ such that $\zeta\ge0$ a.e. in $\Omega\times(0,R)$ and $\zeta=0$ on $\partial \Omega\times(0,R)\,\cup\, \Omega\times\{0\}$.

Then, either $v\equiv0$, or for any compact subset $K$ of $\Omega\times[0,R)$,
$$
\text{ess inf }\; v\vert_{K}>0.
$$
\end{lemma}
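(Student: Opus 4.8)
The statement is a strong maximum principle (Harnack-type positivity) for degenerate equations with the Muckenhoupt weight $y^{1-2s}$ and a Neumann-type condition on the flat boundary $\Omega\times\{0\}$. The plan is to reduce it to the interior strong maximum principle in the open cylinder plus a boundary version of the Hopf lemma / Harnack inequality at $y=0$ for the weighted operator. The weight $y^{1-2s}$ belongs to the Muckenhoupt class $A_2$ for every $s\in(0,1)$, so the Fabes--Kenig--Serapioni theory applies: weak solutions of $\mathrm{div}(y^{1-2s}\nabla v)=0$ are locally H\"older continuous and satisfy a scale-invariant Harnack inequality on compact subsets of the open set where the equation holds. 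First I would observe that since $v\ge 0$ and satisfies the equation in $\Omega\times(0,R)$, if $v$ vanishes at some point of $\Omega\times(0,R)$ then by the interior Harnack inequality (applied on a chain of balls, using connectedness of $\Omega\times(0,R)$) $v\equiv 0$ in $\Omega\times(0,R)$, and then by continuity and the weak formulation of the Neumann condition also $v\equiv 0$ up to $y=0$; hence we may assume $v>0$ in the open cylinder and it remains to show $v$ stays bounded away from $0$ near the flat boundary, locally uniformly in $x$.

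For the boundary part, the key device is the even reflection. Because the co-normal derivative $-y^{1-2s}v_y|_{y=0}\ge 0$, reflecting $v$ evenly across $\{y=0\}$, i.e. setting $\tilde v(x,y)=v(x,|y|)$, produces a function that is a weak \emph{supersolution} of $\mathrm{div}(|y|^{1-2s}\nabla \tilde v)=0$ on $\Omega\times(-R,R)$ (the sign of the distributional term on $\{y=0\}$ coming exactly from the one-sided Neumann inequality hypothesized in the lemma). Since $|y|^{1-2s}$ is still $A_2$ on $\Omega\times(-R,R)$, the weak Harnack inequality for nonnegative supersolutions from the Fabes--Kenig--Serapioni / Gutierrez--Wheeden theory gives, for any ball $B\subset\subset \Omega\times(-R,R)$ centered on $\{y=0\}$,
\[
\Big(\mean_{B}\tilde v^{\,p}\,|y|^{1-2s}\,dxdy\Big)^{1/p}\le C\,\mathrm{ess\,inf}_{\frac12 B}\tilde v
\]
for some small $p>0$. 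The left-hand side is positive because $\tilde v=v>0$ on the upper half of $B$, so $\mathrm{ess\,inf}_{\frac12 B}v>0$; covering a compact $K\subset\Omega\times[0,R)$ by finitely many such half-balls (and ordinary interior balls away from $y=0$) and chaining the Harnack/weak-Harnack estimates yields $\mathrm{ess\,inf}_{K}v>0$, completing the proof.

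The main obstacle — and the point that needs the most care — is justifying the reflection step at the level of weak formulations: one must check that the hypothesis "$\int_{\Omega\times(0,R)}y^{1-2s}\nabla v\cdot\nabla\zeta\,dxdy\ge 0$ for all admissible $\zeta\ge0$ vanishing on $\partial_L$ and on $\{y=0\}$" together with $\mathrm{div}(y^{1-2s}\nabla v)=0$ in the interior upgrades precisely to "$\tilde v$ is a nonnegative weak supersolution on $\Omega\times(-R,R)$ with weight $|y|^{1-2s}$", i.e. $\int |y|^{1-2s}\nabla\tilde v\cdot\nabla\eta\,dxdy\ge 0$ for all $\eta\ge0$ in $H^1(|y|^{1-2s})$ compactly supported in $\Omega\times(-R,R)$. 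This is done by splitting a general test function $\eta$ into its even and odd parts in $y$: the odd part pairs to zero by symmetry of $\tilde v$, and the even part $\eta_e$, restricted to $y>0$, is (after an approximation to handle the non-vanishing of $\eta_e$ on $\{y=0\}$) a legitimate test function of the type allowed in the hypothesis — here one writes $\eta_e=\zeta+\eta_e(\cdot,0)$ schematically, using that the interior equation lets us integrate by parts the "trace" contribution and that the Neumann inequality controls its sign. A secondary technical point is that the cited Harnack inequalities are usually stated for solutions, so for the supersolution one invokes the weak Harnack inequality (valid for supersolutions in the De Giorgi--Nash--Moser framework adapted to $A_2$ weights); alternatively, one can avoid supersolutions entirely by instead comparing $v$ from below with the solution of the same equation having zero Neumann data on a sub-cylinder and strictly positive Dirichlet data on the lateral and bottom of a slightly smaller cylinder, and applying the genuine Harnack inequality to that comparison solution. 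Either route is routine once the $A_2$-weighted potential theory is in hand; I would cite Fabes--Kenig--Serapioni for the Harnack inequality and local regularity, and Cabr\'e--Sire or Caffarelli--Silvestre for the even-reflection formalism in this specific setting.
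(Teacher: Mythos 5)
Your proposal is correct and takes essentially the same approach as the paper: even reflection of $v$ across $\{y=0\}$ to convert the one-sided Neumann condition into the weak supersolution inequality for the $|y|^{1-2s}$-weighted (and hence $A_2$-weighted) operator on $\Omega\times(-R,R)$, followed by an appeal to the Fabes--Kenig--Serapioni weak Harnack / strong maximum principle. The extra care you devote to the even/odd test-function splitting and to the solution-versus-supersolution distinction fills in details the paper leaves implicit (it simply cites Theorem 2.3.1 and equation (2.3.7) of \cite{FKS}), but does not change the argument.
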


\proof
Let $\tilde v$ denote the even extension of $v$ with respect to the $y$ variable, defined in $\Omega\times(-R,R)$ by
$$
\tilde v(x,y)=
\begin{cases}
v(x,y)&\quad\text{if $y>0$,}\\
v(x,-y)&\quad\text{if $y<0$.}
\end{cases}
$$
Then, 
$$
\int_{\Omega\times(-R,R)}y^{1-2s}\nabla \tilde v\nabla \zeta\;dxdy\ge 0,
$$
for all $\zeta\in H^1(y^{1-2s},\Omega\times(-R,R))$, such that $\zeta\ge0$ a.e. in $\Omega\times(-R,R)$ and $\zeta=0$ on $\partial(\Omega\times(-R,R))$. By the results of Fabes, Kenig, and Serapioni (see Theorem 2.3.1 and the second line of equation (2.3.7) in \cite{FKS}), either $\tilde v\equiv0$, or $\text{ess inf }\; \tilde v\vert_{K}>0$ for any compact set $K$ of $\Omega\times(-R,R)$.
\hfill\qed

\begin{lemma} \label{bpl}
Let $\Omega\subset\R^n$ denote an open set satisfying an interior sphere condition at some point $x_{0}\in\partial\Omega$. Let $R>0$ and let $v$ denote any measurable function on $\Omega\times(0,R)$, $v\ge 0$, $v\not\equiv 0$, such that
$$
\int_{\Omega\times(0,R)}y^{1-2s}\vert\nabla v\vert^2\;dxdy<+\infty.
$$  
Assume in addition that 
$$
-\nabla\cdot(y^{1-2s}\nabla v)=0\qquad\text{in}\,\, \Omega\times(0,R),
$$
and $\left.-y^{1-2s}v_{y}\right\vert_{y=0}\ge0$ in $\Omega$ in the sense that
$$
 \int_{\Omega\times(0,R)}
y^{1-2s}\nabla v\cdot\nabla\zeta\;dxdy\ge0
$$
for all $\zeta\in H^1(y^{1-2s},\Omega\times(0,R))$ such that $\zeta\ge0$ a.e. in $\Omega\times(0,R)$ and $\zeta=0$ on $\partial \Omega\times(0,R)\,\cup\, \Omega\times\{0\}$.

Then, there exists $\epsilon>0$ and 
a constant $c=c(R)>0$ such that 
$$
v(x,y)\ge c\vert x-x_{0}\vert\qquad\text{for $x\in B_{\epsilon}(x_{0})\cap\Omega$ and $y\in[0,R-2).$}
$$ 
\end{lemma}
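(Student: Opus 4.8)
The plan is to establish this Hopf‑type (boundary point) lemma by combining the strong maximum principle with an explicit barrier for the degenerate operator $\operatorname{div}(y^{1-2s}\nabla\,\cdot\,)$; we may assume $R>2$, the statement being vacuous otherwise. Let $B_\rho(z_0)\subset\Omega$ be the interior tangent ball at $x_0$ furnished by the hypothesis. First I would apply Lemma \ref{smp} to $v$, which is admissible since $v\ge0$, $v\not\equiv0$, $v$ has finite weighted energy and solves the degenerate equation with the conormal sign at $\{y=0\}$: on the compact set $\overline{B_{3\rho/4}(z_0)}\times[0,R-1]\subset\Omega\times[0,R)$ there is $m>0$ with $v\ge m$ a.e.; this controls $v$ from below on a full neighbourhood, inside the barrier region used below, of the inner sphere $\partial B_{\rho/2}(z_0)\times[0,R-1]$, which will make an extension‑by‑zero legitimate in the comparison step.

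The core is a barrier on the annular half‑cylinder $D=\big(B_\rho(z_0)\setminus\overline{B_{\rho/2}(z_0)}\big)\times(0,R-1)$ of product form $w(x,y)=\beta\,\eta(y)\,\phi(x)$, where $\phi(x)=e^{-\alpha|x-z_0|^2}-e^{-\alpha\rho^2}$ is the classical Hopf barrier on the annulus $\tfrac\rho2\le|x-z_0|\le\rho$ (nonnegative there and vanishing on $|x-z_0|=\rho$), and $\eta$ is the bounded solution of the Bessel‑type ODE $(y^{1-2s}\eta')'+\lambda y^{1-2s}\eta=0$ normalized by $\eta(0)=1$, with $\lambda>0$ chosen so that the first zero of $\eta$ falls at $y=R-1$; thus $\eta>0$ on $[0,R-1)$, $\eta(R-1)=0$, $\eta'(0)=0$, and $\eta\ge\eta_0>0$ on $[0,R-2]$. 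A direct computation gives $\operatorname{div}(y^{1-2s}\nabla w)=\beta\,y^{1-2s}\eta\,(\Delta_x\phi-\lambda\phi)$, and since $\Delta_x\phi\gtrsim\alpha^2 e^{-\alpha|x-z_0|^2}$ on the annulus while $0\le\phi\le e^{-\alpha|x-z_0|^2}$, fixing $\alpha$ large makes the right‑hand side nonnegative, so $w$ is a subsolution in $D$. Since $y^{1-2s}\eta'(y)\to0$ as $y\to0^+$ (regular expansion of $\eta$ at the singular point), the weighted conormal derivative $-y^{1-2s}w_y$ vanishes at $\{y=0\}$ in the weak sense. Lastly I would take a suitable $\beta>0$ so that $\beta\eta\phi\le m$ on $\overline{B_{3\rho/4}(z_0)}\setminus B_{\rho/2}(z_0)$ times $[0,R-1]$, noting that $w=0$ on $\partial B_\rho(z_0)\times[0,R-1]$ (there $\phi=0$) and on the top face $(\cdot)\times\{R-1\}$ (there $\eta=0$).

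Comparison then concludes the argument: inserting $\zeta=(w-v)^+$, extended by zero outside $D$ (legitimate because $w\le v$ on the lateral boundary and the top face of $D$), as a test function in the weak formulations obeyed by $v$ (equation in the interior, conormal sign $\ge0$ at $\{y=0\}$) and by $w$ (subsolution, conormal derivative $=0$ at $\{y=0\}$), the boundary terms at $\{y=0\}$ carry favourable signs and one gets $\int_D y^{1-2s}|\nabla(w-v)^+|^2\,dxdy\le0$, whence $w\le v$ in $D$; this is the mechanism behind Lemma \ref{mp}, the test functions being kept away from the face $\{y=R\}$. For $y\in[0,R-2)$ this gives $v(x,y)\ge\beta\eta_0\big(e^{-\alpha|x-z_0|^2}-e^{-\alpha\rho^2}\big)\gtrsim\rho-|x-z_0|$ on the annulus, and, together with $v\ge m$ on the inner cylinder, $v(x,y)\ge c_0\,\dist(x,\partial B_\rho(z_0))$ for every $x\in B_\rho(z_0)$ and $y\in[0,R-2)$. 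Using that $B_\rho(z_0)$ is interior tangent to $\partial\Omega$ at $x_0$ and shrinking $\epsilon$ so that the relevant points of $B_\epsilon(x_0)\cap\Omega$ lie in the interior ball (e.g. along the inward normal, where $\dist(x,\partial B_\rho(z_0))=|x-x_0|$) then yields $v(x,y)\ge c|x-x_0|$.

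The main obstacle is the barrier step for the degenerate weight $y^{1-2s}$: the $y$‑profile $\eta$ has to vanish at a height strictly below $R$ (so the comparison never meets the uncontrolled face $\{y=R\}$ — this is the source of the $R-2$), must keep $w$ a subsolution all the way up to that height where $\eta\to0$ (which forces the eigenfunction‑type choice $(y^{1-2s}\eta')'=-\lambda y^{1-2s}\eta$, a generic cutoff failing because $|(y^{1-2s}\eta')'|$ must stay dominated by $\eta$ itself), and must have vanishing weighted conormal derivative at $\{y=0\}$ so that its Neumann sign matches that of $v$. A secondary, purely technical point is justifying the weak comparison despite $v$ carrying no lateral boundary condition, which is handled by supporting the test function in $\overline D$ and letting it vanish on the lateral part of $\partial D$; and the final passage from the bound by $\dist(x,\partial B_\rho(z_0))$ to the stated bound by $|x-x_0|$ is the (elementary) geometric input coming from the interior sphere condition.
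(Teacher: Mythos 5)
Your proof is correct and follows essentially the same route as the paper's: obtain a strictly positive lower bound for $v$ on the inner cylinder $\partial B_{\rho/2}(z_0)\times[0,R-1]$ by the strong maximum principle (Lemma~\ref{smp}), build an explicit separable subsolution on the annular half-cylinder that vanishes on the outer sphere and on the top face and has nonpositive weighted conormal derivative at $\{y=0\}$, and conclude by the weak comparison principle. The only (cosmetic) difference is the barrier itself: the paper takes $z(x,y) = (1+y^{2s})(e^{-y^2}-e^{-(R-1)^2})(|x|^{-\alpha}-1)$, with an elementary $y$-profile whose sign of $\partial_y(y^{1-2s}\partial_y\cdot)$ is checked by splitting into $y^2\ge 1+s$ and $y^2< 1+s$, and whose conormal derivative at $y=0$ is in fact strictly negative; you instead take the $y$-profile to be the regular Bessel eigenfunction of $(y^{1-2s}\eta')' = -\lambda y^{1-2s}\eta$ (which makes the subsolution computation reduce to the purely spatial inequality $\Delta_x\phi \ge \lambda\phi$, a cleaner algebraic step, at the price of invoking the Bessel ODE and Sturm theory to place its first zero at $R-1$), and the classical exponential annular barrier $\phi = e^{-\alpha|x-z_0|^2}-e^{-\alpha\rho^2}$ in place of $|x|^{-\alpha}-1$. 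Both choices yield the same estimate, and your tracking of where the ``$R-2$'' comes from, and of why the conormal sign of the barrier must be $\le 0$ at $y=0$, matches the paper's logic; so this is essentially the same argument with a mildly different barrier.
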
 

\proof
Take an interior sphere $B$ which is tangent to $\partial\Omega$ at $x_{0}$. Translating and dilating $\Omega$ if necessary, we may always assume that $B$ is the unit ball centered at the origin. Take $\alpha>n-2$ to be fixed later and consider $z=z(x,y)$ the function defined by
$$
z(x,y) = (1+y^{2s})(e^{-y^2}-e^{-(R-1)^2})(\vert x\vert^{-\alpha}-1)\quad\text{for $x\neq0$ and $y\in[0,R-1]$.}
$$
We compute 
$$
\Delta_{x}z = (1+y^{2s})(e^{-y^2}-e^{-(R-1)^2}) \alpha(\alpha- (N-2)) \vert x\vert^{-\alpha-2},
$$

$$
\lim_{y\to0^+}(-y^{1-2s}z_{y} )= -2s(1-e^{-(R-1)^2})(\vert x\vert^{-\alpha}-1) \qquad\text{for $x\neq0$,}
$$

$$
z_{yy}+\frac{1-2s}y z_{y} = -4e^{-y^2}\left[(1-s) + (1+s)y^{2s} - y^2 - y^{2s+2}\right] (\vert x\vert^{-\alpha}-1).
$$
If $y^2\ge (1+s)$, then $z_{yy}+\frac{1-2s}y z_{y}\ge 0$ and
$
\nabla\cdot (y^{1-2s}\nabla z) \ge 0.
$
If $y^2<(1+s)$, then $z_{yy}+\frac{1-2s}y z_{y} \le C(\vert x\vert^{-\alpha}-1)$. 
Choosing $\alpha$  large enough, we deduce that
$$
\nabla\cdot (y^{1-2s}\nabla z)\ge 0\qquad\text{for all $x\neq0$, $y\in[0,R-1]$. }
$$
Now, let  $v$ be as in the statement of the lemma. By Lemma \ref{smp}, $\text{ess inf }\; v\vert_{K}>0$,  on $K=\partial B_{1/2}\times [0,R-1]$. Choose $\delta>0$ so small that 
$v \ge \delta z$ a.e. on $K$. By the maximum principle, applied in the region $(B_{1}\setminus B_{1/2})\times(0,R-1)$, we deduce that $v\ge\delta z$ in this region.
\hfill\qed

\begin{lemma}
\label{maxp2}
Let $\Omega\subset\R^n$ be a bounded open set with smooth boundary.
Let $v$ denote a measurable function on $\Omega\times(0,+\infty)$, such that
$$
\int_{\Omega\times(0,R)}y^{1-2s}\vert\nabla v\vert^2\;dxdy<+\infty 
\quad\text{for all }R>0.
$$  
Assume that $v\ge 0 $ on $\partial \Omega \times(0,+\infty)$, 
that
$$
-\nabla\cdot(y^{1-2s}\nabla v) \ge 0\qquad\text{in $\Omega\times(0,R)$},
$$
and $\left.-y^{1-2s}v_{y}\right\vert_{y=0}\ge0$ in $\Omega$ in the sense that
$$
 \int_{\Omega\times(0,R)}
y^{1-2s}\nabla v\cdot\nabla\zeta\;dxdy\ge0
$$
for all $\zeta\in H^1(y^{1-2s},\Omega\times(0,+\infty))$ with compact support in $\overline \Omega \times [0,+\infty)$
such that $\zeta\ge0$ and $\zeta=0$ on $\partial \Omega\times(0,R)\,\cup\, \Omega\times\{0\}$. If there exist $C>0$ and $m>0$ such that
\begin{equation}
\label{growth}
|v(x,y)|\le C(1+|y|^m)
\quad\text{for all } (x,y) \in \Omega \times (0,+\infty),
\end{equation}
then $v\ge 0 $ in $\Omega \times (0,+\infty)$.
\end{lemma}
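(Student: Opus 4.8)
The plan is to prove this Phragmén–Lindelöf type statement by constructing a suitable supersolution that absorbs the polynomial growth at $y=+\infty$ and then applying the local maximum principle of Lemma \ref{maxp2}'s predecessors (Lemma \ref{mp} and Lemma \ref{smp}) on bounded cylinders $\Omega\times(0,R)$, letting $R\to+\infty$. First I would fix a small parameter $\epsilon>0$ and look for a function $w=w(y)$, depending only on $y$, solving $-\frac{d}{dy}(y^{1-2s}w') = 0$, i.e. $w(y) = A + B\,y^{2s}$; more usefully, I want a \emph{strict} supersolution that grows, so I would consider $w_\epsilon(x,y) = \epsilon\bigl(1 + y^{2s} + \delta\phi(x)\bigr)$ where $\phi$ solves an auxiliary problem making the conormal derivative at $y=0$ strictly positive. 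The key point is that $1+y^{2s}$ is $(-\nabla\cdot(y^{1-2s}\nabla\cdot))$–harmonic but has \emph{nonpositive} limiting conormal derivative $\lim_{y\to0}(-y^{1-2s}\partial_y(1+y^{2s})) = -2s < 0$, which is the wrong sign; so instead I would build a barrier of the form $w_\epsilon(x,y)=\epsilon(1+y^2)^{m/2}\cdot\psi(x)$ or more simply iterate: since any $y$–power between $0$ and $2s$ can be dominated, the natural choice is $\Psi(y) = C_0 + y^{\beta}$ with $\beta\in(\max(m,2s),2)$ chosen so that $\Psi'' + \frac{1-2s}{y}\Psi' = \beta(\beta-2s)y^{\beta-2}\ge 0$, giving a \emph{subsolution} in $y$; reflecting signs, $-\Psi$ is a supersolution but unbounded below.

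Let me restructure the plan around the cleaner, standard argument. Given $\epsilon>0$, set $v_\epsilon := v + \epsilon\, z$, where $z=z(y)$ is a fixed smooth function on $[0,\infty)$ with $z\ge 1$, $z(y)\to+\infty$ faster than $C(1+y^m)$ (e.g. $z(y)=(1+y^2)^{(m+1)/2}$ times a suitable correction), chosen so that $-\nabla\cdot(y^{1-2s}\nabla z)\ge 0$ in $\Omega\times(0,\infty)$ \emph{and} $\lim_{y\to 0}(-y^{1-2s}z_y)\ge 0$ in the weak sense; such a $z$ exists by a direct computation analogous to the one in the proof of Lemma \ref{bpl} — one checks $z''+\frac{1-2s}{y}z' = \bigl[(m+1)(m-1+2s) y^2(1+y^2)^{(m-3)/2} + (m+1)(1+y^2)^{(m-1)/2}\cdot\frac{\text{stuff}}{y}\cdot(1-2s)\bigr]$ and verifies the conormal sign, adding a term like $+c\,\phi_1(x)$ with $\phi_1$ the first eigenfunction extension if needed to fix the boundary behaviour at $y=0$. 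Then $v_\epsilon \ge 0$ on $\partial\Omega\times(0,\infty)$, $-\nabla\cdot(y^{1-2s}\nabla v_\epsilon)\ge 0$, the weak conormal inequality holds, and crucially on $\Omega\times\{R\}$ we have $v_\epsilon = v + \epsilon z \ge -C(1+R^m) + \epsilon z(R) \ge 0$ for $R$ large enough depending on $\epsilon$.

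With $R=R(\epsilon)$ so large that $v_\epsilon\ge 0$ on the top face, I would apply the maximum principle on the truncated cylinder $\Omega\times(0,R)$: testing the weak formulation with $\zeta = v_\epsilon^-$ (which vanishes on $\partial\Omega\times(0,R)$ since $v_\epsilon\ge0$ there, on $\Omega\times\{R\}$ since $v_\epsilon\ge 0$ there, and whose conormal trace term has a favorable sign) yields $\int y^{1-2s}|\nabla v_\epsilon^-|^2 \le 0$, hence $v_\epsilon^-\equiv 0$, i.e. $v + \epsilon z \ge 0$ on $\Omega\times(0,R)$. Since $R=R(\epsilon)\to\infty$ as $\epsilon\to 0$, for every fixed $(x,y)$ we get $v(x,y)\ge -\epsilon z(y)$ for all small $\epsilon$, and letting $\epsilon\to 0$ gives $v\ge 0$ on all of $\Omega\times(0,\infty)$. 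The main obstacle is the construction of the barrier $z$: one needs $z$ simultaneously to be a (weak) supersolution of the degenerate equation with nonnegative conormal derivative at $y=0$ and to grow strictly faster than the allowed polynomial bound $C(1+y^m)$; the degeneracy of the weight $y^{1-2s}$ at $y=0$ makes the conormal boundary condition delicate, and one must be careful that the growth exponent can be taken above $m$ while keeping the sign of $z''+\frac{1-2s}{y}z'$ under control (this forces, as in Lemma \ref{bpl}, either restricting to exponents below $2$ and handling the region $\{y^2<1+s\}$ separately, or multiplying by an $x$–dependent factor like $|x|^{-\alpha}-1$ on a ball — but here, since $v\ge0$ on $\partial\Omega\times(0,\infty)$ already, a purely $y$–dependent correction combined with a bounded $x$–correction suffices). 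A minor technical point to check is that $v_\epsilon^-$ is an admissible test function, which requires knowing $v$ is locally in $H^1(y^{1-2s})$ up to $\{y=0\}$ and approximating by compactly supported $\zeta$'s as in the statement's hypothesis.
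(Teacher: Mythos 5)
Your overall strategy — add $\epsilon z$ with $z$ a supersolution of the weighted equation growing faster than $C(1+|y|^m)$, kill $(v+\epsilon z)^-$ by the maximum principle on a truncated cylinder $\Omega\times(0,L)$, then send $L\to\infty$ and $\epsilon\to 0$ — is exactly the paper's Phragm\'en--Lindel\"of argument, and your step of testing with $(v+\epsilon z)^-$ is the right implementation of the maximum principle. The gap is that you never actually produce a working barrier $z$, and your closing heuristic that ``a purely $y$-dependent correction combined with a bounded $x$-correction suffices'' is false: it is precisely where the argument cannot be made to work.

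Here is the concrete obstruction you run into. If $z=z(y)$ satisfies $-\frac{d}{dy}(y^{1-2s}z')\ge 0$, then $y^{1-2s}z'(y)$ is non-increasing; the conormal condition $\lim_{y\to 0}(-y^{1-2s}z')\ge 0$ says this monotone quantity starts out $\le 0$, hence $y^{1-2s}z'\le 0$ for all $y$, hence $z'\le 0$ and $z$ is non-increasing. So a purely $y$-dependent barrier with the right conormal sign cannot grow at all, much less dominate $C(1+|y|^m)$, and adding a bounded $x$-dependent term does not change the $y$-growth. Your candidates $C_0+y^{\beta}$ with $\beta<2$ and $(1+y^2)^{(m+1)/2}$ are not supersolutions of the $y$-ODE for that reason. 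What is needed is a \emph{multiplicative} $x$-factor carrying negative Laplacian: the paper takes $B_R\supset\overline\Omega$, the first Dirichlet eigenfunction $\varphi_R>0$ of $-\Delta$ on $B_R$ with eigenvalue $\mu_R>0$, and
$$
z(x,y)=\varphi_R(x)\bigl(e^{\lambda y}-\lambda y\bigr),\qquad \lambda>0\ \text{small}.
$$
Then $\nabla\cdot(y^{1-2s}\nabla z)=y^{1-2s}\varphi_R\bigl[g''+\tfrac{1-2s}{y}g'-\mu_R g\bigr]$ with $g(y)=e^{\lambda y}-\lambda y$, and it is the $-\mu_R g$ term coming from $-\Delta\varphi_R=\mu_R\varphi_R$ that allows $g$ to grow exponentially while keeping the sign of $\nabla\cdot(y^{1-2s}\nabla z)$ for $\lambda$ small; the subtraction of $\lambda y$ forces $g'(0)=0$ so the conormal derivative vanishes at $y=0$. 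Your $|x|^{-\alpha}-1$ factor from Lemma~\ref{bpl} is the wrong tool here (it is unbounded and designed for a Hopf-type gradient estimate), and your additive first-eigenfunction term does not supply the needed $-\mu_R g$ coupling. So the plan is correct, but the barrier — which is the heart of the lemma — is missing.
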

\proof
Take $R>0$ such that $\Omega\subseteq B_R(0)$. Let $\varphi_R$ denote the first eigenfunction of $-\Delta$ in $B_R(0)$ with zero Dirichlet boundary condition and let $\mu_R>0$ be its corresponding eigenvalue.
Let $\lambda>0$ to be chosen and set
$$
z(x,y) = \varphi_R(x) (e^{\lambda y} - \lambda y) .
$$
We compute
$$
\nabla \cdot(y^{1-2s} \nabla z ) = y^{1-2s}
\left[
-\mu_R + \lambda^2 +\lambda^2(1-2s) e^{-\lambda y } \frac{e^{\lambda y }-1}{\lambda y}
\right]
\varphi_R(x) e^{\lambda y} .
$$
By choosing $\lambda>0$ small we have $\nabla \cdot (y^{1-2s} \nabla z )<0$ in $B_R(0) \times(0,+\infty)$.
Let $\epsilon>0$. By \eqref{growth} there exists $L>0$ such that 
$v + \epsilon z \ge 0$ for $x\in \Omega$ and $y \ge L$.
Using the maximum principle in the form of Lemma~\ref{mp} we deduce that $v + \epsilon z \ge 0$ in $\Omega \times (0,L)$. Letting $L \to \infty$ we conclude that  $v + \epsilon z \ge 0$ in $\Omega \times (0,+\infty)$. Finally,  by letting $\epsilon \to 0$ we obtain the stated result.
\qed

\subsection{Interior regularity}

In this section, we study the extension problem \eqref{extension}, when $h$ is bounded or belongs to a H\"older space. The proof of the next lemma can be found in \cite{Cabre-Sire}, Lemma 4.4. 
\begin{lemma}\label{lema_reg}
. 
Let $h\in H'$ and $v\in H^1_{0,L}(y^{1-2s})$ denote the solution of \eqref{extension}.  
Then, for any $\omega\subset\subset\Omega$, $R>0$, we have\\
\\
(i)   If $h\in L^\infty(\Omega)$, then $v\in C^{\beta}(\omega\times [0,R])$, for any  $\beta\in(0,\min(1,2s))$,\\
\\
(ii)  If $h\in C^\beta(\Omega)$ then 
\begin{enumerate}
\item $v\in C^{\beta+2s}(\omega\times [0,R])\quad\text{if $\beta+2s<1$,}$\\
\item $\displaystyle\frac{\partial v}{\partial x_{i}}\in C^{\beta+2s-1}(\omega\times [0,R])\quad\text{if $1<\beta+2s<2$, $i=1,\dots,n$},$\\
\item $\displaystyle\frac{\partial^2 v}{\partial x_{i}\partial x_{j}}\in C^{\beta+2s-2}(\omega\times [0,R])\quad\text{if $2<\beta+2s$, $i,j=1,\dots,n$.}$\\
\end{enumerate}
\end{lemma}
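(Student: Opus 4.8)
\noindent\emph{Proof plan.} The plan is to reduce everything to a local statement near the boundary slice $\omega\times\{0\}$ and then run a Campanato/Morrey iteration against a frozen boundary datum. First I would note that on any region $\omega\times[\delta,R]$ with $\delta>0$ the extension $v$ solves the uniformly elliptic equation $\Delta_x v+\frac{1-2s}{y}v_y+v_{yy}=0$ with smooth coefficients, so classical interior estimates give $v\in C^\infty(\omega\times(0,\infty))$ with bounds on $\omega\times[\delta,R]$; hence it suffices to prove the claimed regularity in a half-ball $B_r^+=\{(x,y):|x-x_0|^2+y^2<r^2,\ y>0\}$ centred at an arbitrary $(x_0,0)$ with $x_0\in\omega$ and $\overline{B_{2r}(x_0)}\subset\Omega$. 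Two structural facts will do the work. \emph{(a)} The weight $|y|^{1-2s}$ is a Muckenhoupt $A_2$ weight on $\R^{n+1}$ (since $-1<1-2s<1$); by the weak formulation \eqref{weak formulation} the even reflection $\tilde v(x,y):=v(x,|y|)$ is, when $h\equiv0$, a weak solution of $\operatorname{div}(|y|^{1-2s}\nabla\tilde v)=0$ across $\{y=0\}$, so the De Giorgi--Nash--Moser theory of Fabes, Kenig and Serapioni~\cite{FKS} supplies a Harnack inequality and an a priori bound $\tilde v\in C^{0,\gamma}_{\mathrm{loc}}$ for some $\gamma=\gamma(n,s)\in(0,1)$; moreover a Frobenius analysis of the Bessel ODE \eqref{bessel ode} shows that an even-in-$y$ solution with homogeneous Neumann datum expands in even powers of $y$, hence is $C^\infty$ up to $\{y=0\}$ in the interior of the slit ball. \emph{(b)} The equation, the weight and the Neumann condition are invariant under $v(x,y)\mapsto v(x_0+\rho x,\rho y)$, and the explicit function $y^{2s}$ satisfies $\operatorname{div}(y^{1-2s}\nabla(y^{2s}))=0$ together with $-y^{1-2s}\partial_y(y^{2s})|_{y=0}=2s$.

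\smallskip
\noindent For part (i), on $B_r^+$ I would split $v=v_0+w$, where $v_0$ solves $\operatorname{div}(y^{1-2s}\nabla v_0)=0$ in $B_r^+$, $-y^{1-2s}\partial_y v_0|_{y=0}=c_{n,s}h(x_0)$, and $v_0=v$ on the curved part of $\partial B_r^+$. Since $h(x_0)$ is constant, $v_0+\frac{c_{n,s}h(x_0)}{2s}\,y^{2s}$ solves the \emph{homogeneous} Neumann problem, so by (a) it is smooth up to $\{y=0\}$ in the interior; consequently the excess $\rho^{-(n+2-2s)}\inf_{c\in\R}\int_{B_\rho^+}y^{1-2s}|v_0-c|^2\,dx\,dy$ decays like $\rho^{2\min(1,2s)}$, the $y^{2s}$ term being the least regular piece of $v_0$. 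The remainder $w$ solves the homogeneous equation with datum $c_{n,s}(h-h(x_0))$, of size $\le2\|h\|_{L^\infty}$, and a weighted Caccioppoli inequality together with the weighted trace embedding on $\{y=0\}$ bounds $\int_{B_\rho^+}y^{1-2s}|\nabla w|^2$. Comparing $v$ with $v_0$ on dyadic half-balls, iterating this excess decay and summing the geometric series, I would conclude by the Campanato characterisation of Hölder continuity that $v\in C^{0,\beta}(\omega\times[0,R])$ for every $\beta<\min(1,2s)$.

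\smallskip
\noindent Part (ii) is the same scheme with $h\in C^\beta(\Omega)$: the oscillation of the datum on a half-ball of radius $\rho$ is now $\le[h]_{C^\beta}\rho^\beta$, which improves the gain at each step and upgrades the Campanato exponent from $\min(1,2s)$ to $\beta+2s$, giving $v\in C^{\beta+2s}(\omega\times[0,R])$ when $\beta+2s<1$. For the remaining ranges I would note that each tangential difference quotient $\tau\mapsto\tau^{-1}\big(v(\cdot+\tau e_i,\cdot)-v\big)$ satisfies the extension problem with datum the corresponding difference quotient of $h$, so the estimate just proved, applied uniformly in $\tau$, yields $\partial_{x_i}v\in C^{\beta+2s-1}$ when $1<\beta+2s<2$, and, iterating once more, $\partial_{x_i}\partial_{x_j}v\in C^{\beta+2s-2}$ when $2<\beta+2s$.

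\smallskip
\noindent The hard part will be that all the difficulty is concentrated at $\{y=0\}$, where $y^{1-2s}$ degenerates or blows up, so classical Schauder theory is unavailable and \cite{FKS} alone yields only a small, non-explicit Hölder exponent: reaching the sharp exponents $\min(1,2s)$ and $\beta+2s$ forces the perturbative comparison with the frozen-datum solution, and the delicate steps there are controlling $w$ in the degenerate energy norm and converting that control into a pointwise Hölder modulus up to the boundary. An alternative route to the optimal constants --- essentially the one carried out in \cite{Cabre-Sire} --- is to represent $v$ locally as the Caffarelli--Silvestre Poisson extension, with kernel $P_s(x,y)=c_{n,s}\,y^{2s}(|x|^2+y^2)^{-\frac{n+2s}{2}}$, of a datum built from $h$, plus a remainder solving a homogeneous Neumann problem, and to read off the regularity from the mapping properties of $P_s\ast(\cdot)$ on $L^\infty$ and on $C^\beta$.
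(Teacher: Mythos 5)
The paper does not actually prove Lemma~\ref{lema_reg}; it simply cites Lemma~4.4 of \cite{Cabre-Sire}. The argument in that reference is essentially the ``alternative route'' you sketch in your closing paragraph: write $v$ locally as a Caffarelli--Silvestre Poisson extension of a truncated datum plus a remainder with homogeneous Neumann data, and read the sharp H\"older exponents off the explicit kernel $P_s(x,y)=c_{n,s}\,y^{2s}(|x|^2+y^2)^{-(n+2s)/2}$. Your main route --- a Campanato/Morrey excess-decay iteration against the frozen-datum solution, with the Fabes--Kenig--Serapioni $A_2$ theory supplying the starting regularity and the explicit barrier $y^{2s}$ controlling the normal direction --- is a genuinely different and, in outline, legitimate alternative. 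It is more robust (no explicit kernel, hence adaptable to variable-coefficient perturbations of the extension operator), while the Poisson-kernel route is shorter once the kernel bounds are in place and delivers the critical exponents with less iteration. Two caveats, though: the ``Frobenius analysis of \eqref{bessel ode}'' concerns the Fourier modes on a product domain and does not literally apply in the half-ball $B_r^+$; the smoothness of homogeneous-Neumann solutions up to $\{y=0\}$ should instead come from even reflection together with bootstrapping on tangential derivatives and on the conjugate function $y^{1-2s}\partial_y v$, which solves the $A_2$-equation with weight $y^{2s-1}$. Also, for $s>1/2$ the barrier $y^{2s}\in C^{1,2s-1}$ is \emph{not} the least regular piece, so the cap $\beta<1$ in that regime has to come from the generic excess decay against bounded Neumann data rather than from the barrier.

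There is a genuine gap in your treatment of cases (ii)(2) and (ii)(3). You propose to apply the estimate of (ii)(1) to the first-order tangential difference quotient $\tau^{-1}(v(\cdot+\tau e_i,\cdot)-v)$, whose Neumann datum is $\tau^{-1}(h(\cdot+\tau e_i)-h)$. But if $h\in C^\beta(\Omega)$ with $\beta<1$, this datum has $L^\infty$ norm of order $[h]_{C^\beta}\,\tau^{\beta-1}\to\infty$ and is \emph{not} bounded in $C^\beta$ uniformly in $\tau$, so the estimate ``applied uniformly in $\tau$'' yields nothing for $\partial_{x_i}v$. The same obstruction occurs for the second tangential derivatives. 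Within the Campanato framework the standard fix is to measure excess against polynomials of degree $\lfloor \beta+2s\rfloor$ in $x$ (and to freeze a Taylor polynomial of $h$ at $x_0$ rather than just $h(x_0)$), which produces the decay $\rho^{2(\beta+2s)}$ characterising $C^{\beta+2s}$ through Campanato's theorem in the ranges $1<\beta+2s<2$ and $2<\beta+2s<3$. Alternatively, one can argue as in \cite{Cabre-Sire} directly from the mapping properties of the Poisson kernel between $C^\beta$ and $C^{\beta+2s}$ and avoid difference quotients altogether. As written, your step (ii)(2)--(3) fails.
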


\subsection{Boundary regularity}

\begin{lemma} 
\label{boundary regularity}
Let  $u \in H$ be the solution of
\begin{align}
\label{problem h}
\left\{
\begin{aligned}
(-\Delta )^s u & = h && \mbox{in } \Omega
\\
u & = 0 && \mbox{on } \partial  \Omega
\end{aligned}
\right.
\end{align}
where $h \in L^\infty(\Omega)$. Then
$u\in C^\alpha(\overline{\Omega})$ for all $\alpha\in(0,\min(2s,1))$. 
\end{lemma}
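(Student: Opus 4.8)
The approach I would take avoids working with the extension directly. Since $(-\Delta)^{-s}$ is defined spectrally, the subordination identity $\mu^{-s}=\frac1{\Gamma(s)}\int_0^\infty t^{s-1}e^{-\mu t}\,dt$ (valid for $\mu>0$) gives, after Fubini,
$$
u=(-\Delta)^{-s}h=\frac1{\Gamma(s)}\int_0^\infty t^{s-1}\,e^{t\Delta}h\,dt ,
$$
where $e^{t\Delta}$ is the Dirichlet heat semigroup on $\Omega$; so the plan is to estimate $\|u\|_{C^\alpha(\overline\Omega)}\le\frac1{\Gamma(s)}\int_0^\infty t^{s-1}\|e^{t\Delta}h\|_{C^\alpha(\overline\Omega)}\,dt$ and show the right-hand side is finite for every $\alpha<\min(2s,1)$ and bounded by $\|h\|_{L^\infty(\Omega)}$.

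Splitting the $t$-integral at $1$: for $0<t\le1$ I would use $\|e^{t\Delta}h\|_{L^\infty}\le\|h\|_{L^\infty}$ together with the classical heat-kernel gradient bound $\|\nabla e^{t\Delta}h\|_{L^\infty(\Omega)}\le Ct^{-1/2}\|h\|_{L^\infty}$ (valid up to $\partial\Omega$ on a smooth domain); interpolating between $L^\infty$ and $\mathrm{Lip}(\overline\Omega)$ gives $\|e^{t\Delta}h\|_{C^\alpha(\overline\Omega)}\le Ct^{-\alpha/2}\|h\|_{L^\infty}$ for $0\le\alpha\le1$, so $\int_0^1 t^{s-1}\|e^{t\Delta}h\|_{C^\alpha}\,dt\le C\|h\|_{L^\infty}\int_0^1 t^{s-1-\alpha/2}\,dt$, which converges precisely when $\alpha<2s$; with the restriction $\alpha\le1$ inherent in the Lipschitz bound this is exactly the range $\alpha<\min(2s,1)$. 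For $t\ge1$ I would use the spectral gap: $\|e^{t\Delta}h\|_{C^1(\overline\Omega)}\le Ce^{-ct}\|h\|_{L^\infty}$ since $e^{t\Delta}h$ is then spectrally concentrated near the bottom of the spectrum, so the tail integral converges absolutely. This yields $u\in C^\alpha(\overline\Omega)$ with $\|u\|_{C^\alpha(\overline\Omega)}\le C\|h\|_{L^\infty(\Omega)}$.

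An alternative staying within the paper's extension framework runs as follows. By linearity and the comparison principle (Lemma~\ref{mp}), since $-M\le h\le M$ with $M=\|h\|_{L^\infty}$, one has $|u|\le w$ where $(-\Delta)^sw=M$, $w=0$ on $\partial\Omega$; moreover $0\le w\le\|w\|_{L^\infty}<\infty$ and its canonical extension $V$ satisfies $0\le V\le\|w\|_{L^\infty}$ (Lemmas~\ref{mp} and~\ref{maxp2}). The task reduces to the boundary estimate $w(x)\le C\,d(x)^\alpha$, $d(x)=\dist(x,\partial\Omega)$: combining it with the rescaled interior estimate of Lemma~\ref{lema_reg}(i) on a ball of radius $\sim d(x)$ — where the already-established decay controls the local sup of $V$, so the scaling factors cancel and the Hölder seminorm does not blow up at $\partial\Omega$ — gives $u\in C^\alpha(\overline\Omega)$. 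To get $w(x)\le C\,d(x)^\alpha$ I would use a barrier for \eqref{extension} near each $x_0\in\partial\Omega$: by the uniform exterior sphere condition, with $B_{\rho_0}(p)$ an exterior ball tangent at $x_0$ (so that $|x-p|-\rho_0\le d(x)$ when $x_0$ is the nearest boundary point to $x$), superpose a multiple of $\rho_0^\gamma-|x-p|^\gamma$ — nonnegative, increasing in $|x-p|$, superharmonic in $x$ for $\gamma$ suitably negative, and $\le C(|x-p|-\rho_0)$ near $|x-p|=\rho_0$ — with a degree-$\approx2s$ homogeneous solution of $\mathrm{div}(y^{1-2s}\nabla\,\cdot\,)=0$ carrying the conormal data $c_{n,s}M$; the maximum principle of Lemma~\ref{maxp2} then gives $V\le\Psi$, hence $w(x)\le\Psi(x,0)\le C\,d(x)^{\min(2s,1)}$.

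The main obstacle is this last barrier. One cannot take $\Psi$ of the naive form $(\text{function of }x)+(\text{function of }y)$: a nonnegative supersolution of $\mathrm{div}(y^{1-2s}\nabla\,\cdot\,)=0$ depending only on $y$ with conormal data bounded below by a positive constant necessarily becomes negative for large $y$ (integrating the conormal inequality forces decay $\sim-y^{2s}$). Hence the barrier must genuinely couple the boundary-normal direction with the extension variable $y$, which is why the explicit degree-$\approx2s$ homogeneous profile (a Bessel-type function in the spirit of the $g_k$ of \eqref{bessel ode}) has to be used, and constructing it on a curved boundary — equivalently, establishing the sharp enough boundary rate $\min(2s,1)$ — is the delicate point. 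The heat-semigroup route circumvents this at the price of invoking standard pointwise and spectral estimates for the Dirichlet heat kernel.
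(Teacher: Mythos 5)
Your first (heat-semigroup) argument is correct and is genuinely different from the paper's proof. Since $h\in L^\infty(\Omega)\subset L^2(\Omega)$, the subordination identity applied termwise to the eigenfunction expansion legitimately gives $u=\frac1{\Gamma(s)}\int_0^\infty t^{s-1}e^{t\Delta}h\,dt$ as a Bochner integral in $C(\overline\Omega)$, and the two inputs you invoke --- the $L^\infty$ contraction and the up-to-the-boundary gradient bound $\|\nabla e^{t\Delta}h\|_{L^\infty(\Omega)}\le Ct^{-1/2}\|h\|_{L^\infty}$ for the Dirichlet heat semigroup on a smooth bounded domain --- are classical. The pointwise interpolation $|f(x)-f(y)|\le\min(2\|f\|_{\infty},\|\nabla f\|_{\infty}|x-y|)$ then yields $\|e^{t\Delta}h\|_{C^\alpha(\overline\Omega)}\le Ct^{-\alpha/2}\|h\|_{L^\infty}$, so the small-$t$ integral converges iff $\alpha<2s$, the spectral decay handles $t\ge1$, and the constraint $\alpha\le 1$ is built into the interpolation --- exactly the asserted range $\alpha<\min(2s,1)$. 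What you trade is the paper's self-contained barrier construction for a boundary gradient estimate for the Dirichlet heat kernel, which is standard but nontrivial. Your second sketch, by contrast, is essentially the route the paper takes --- first prove the boundary decay $|u(x)|\le C\,d(x)^{\min(2s,1)}\|h\|_{L^\infty}$ (Lemma~\ref{lemma boundary}), then combine with the interior estimate of Lemma~\ref{lema_reg} by the same rescaling $\tilde v(x,y)=v(x_0+rx,ry)$ you describe --- but it stops short at the barrier. Your diagnosis that a separable supersolution $\phi(x)+\psi(y)$ cannot carry the conormal data is correct; the paper resolves this not with a homogeneous Bessel-type profile, but by taking the Kelvin transform $w(X)=|X|^{2s-n}\tilde v(X/|X|^2)$ of an explicit Poisson-type extension $\bar v$ of a well-chosen piecewise-constant datum $\bar h$ supported near the tangent point. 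That transform simultaneously produces a uniformly positive conormal datum on all of $\Omega$ and inherits the decay $|\bar v(x,0)|\le C|x|^{\min(2s,1)}$ at the origin, which is exactly the piece missing from your barrier sketch.
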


We begin with the following estimate.
\begin{lemma}\label{lemma boundary} 
Let  $u \in H$ be the solution of \eqref{problem h}, 
where $h \in L^\infty(\Omega)$. Then there is constant $C$ such that 
$$
\text{if \ $0<s<1/2$,}\quad
|u(x)| \le C dist(x,\partial \Omega)^{2 s} \| h \|_{L^\infty(\Omega)} \quad \mbox{for all } x \in \Omega,
$$
and 
$$
\text{if $1/2 \le s <1$,}\quad
|u(x)| \le C dist(x,\partial \Omega) \| h \|_{L^\infty(\Omega)} \quad \mbox{for all } x \in \Omega .
$$
\end{lemma}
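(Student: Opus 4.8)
The plan is to reduce to the torsion function, pass to the degenerate extension, and run a barrier argument localized at an exterior ball.

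\emph{Reduction.} By linearity and the maximum principle (Lemma~\ref{mp}): if $w\in H$ solves $(-\Delta)^s w=\|h\|_{L^\infty(\Omega)}$, $w=0$ on $\partial\Omega$, then $(-\Delta)^s(w\pm u)=\|h\|_{L^\infty}\pm h\ge0$, so $|u|\le w$ a.e., and it suffices to prove the estimate for $w$ near $\partial\Omega$. Moreover $w\in L^\infty(\Omega)$: writing $(-\Delta)^{-s}=\tfrac1{\Gamma(s)}\int_0^\infty t^{s-1}e^{t\Delta}\,dt$ and using that $e^{t\Delta}$ is a positivity preserving $L^\infty$-contraction with $\|e^{t\Delta}\|_{L^\infty\to L^\infty}\le Ce^{-ct}$ for $t\ge1$, the integral converges in $L^\infty$. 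Let $v\ge0$ be the canonical extension of $w$ (the solvability lemma above and Lemma~\ref{mp}): $\nabla\cdot(y^{1-2s}\nabla v)=0$ in $\C$, $v=0$ on $\partial_L\C$, and $-y^{1-2s}v_y|_{y=0}=c_{n,s}\|h\|_{L^\infty}$ on $\Omega\times\{0\}$; by minimality of the extension (truncation) one has $0\le v\le M:=\|w\|_{L^\infty}$, and $v\in C(\overline\Omega\times[0,R])$ for all $R$ by Lemma~\ref{lema_reg}.

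\emph{Comparison at an exterior ball.} Fix $\rho>0$ so that $\Omega$ has an exterior ball of radius $\rho$ at every boundary point, and let $x\in\Omega$ with $d(x):=\dist(x,\partial\Omega)<\rho/2$. Let $x_0\in\partial\Omega$ be nearest to $x$, $z_0=x_0+\rho\,\nu(x_0)$, and $\eta(x'):=|x'-z_0|$ for $x'\in\Omega$, so $\eta\ge\rho$ on $\Omega$, $\eta(x)=\rho+d(x)$, and $x\in B_{\rho_0}(z_0)$ for a radius $\rho_0\in(\rho,2\rho)$ to be chosen (depending only on $n,s$). I will build a supersolution $\Psi\ge0$ on $Q:=\bigl(\Omega\cap B_{\rho_0}(z_0)\bigr)\times(0,\infty)$ with $\nabla\cdot(y^{1-2s}\nabla\Psi)\le0$ in $Q$, $-y^{1-2s}\Psi_y|_{y=0}\ge c_{n,s}\|h\|_{L^\infty}$, polynomial growth in $y$, $\Psi\ge M$ on $(\Omega\cap\partial B_{\rho_0}(z_0))\times(0,\infty)$, and $\Psi(x',0)\le C\,(\eta(x')-\rho)^{\min(2s,1)}$, with $C=C(n,s,\rho,\|h\|_{L^\infty})$ uniform in $x_0$. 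Since $v=0$ on $(\partial\Omega\cap B_{\rho_0}(z_0))\times(0,\infty)$, the function $v-\Psi$ is a subsolution of $\nabla\cdot(y^{1-2s}\nabla\,\cdot\,)=0$ that is $\le0$ on $\partial_L Q$, has $-y^{1-2s}(v-\Psi)_y|_{y=0}\le0$, and grows polynomially; testing with $(v-\Psi)^+$ as in Lemmas~\ref{mp} and~\ref{maxp2} yields $v\le\Psi$ in $Q$, whence $w(x)=v(x,0)\le C(\eta(x)-\rho)^{\min(2s,1)}=C\,d(x)^{\min(2s,1)}$. For $d(x)\ge\rho/2$ use $w\le M$, and cover $\partial\Omega$ by finitely many such balls.

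\emph{The barrier.} Near $\partial B_\rho(z_0)$ the geometry is radial in $\eta$, so the model is the flat quarter-space problem $\nabla\cdot(y^{1-2s}\nabla V)=0$ in $\{t>0,\,y>0\}$, $V=0$ on $\{t=0\}$, $-y^{1-2s}V_y|_{y=0}=c_{n,s}\|h\|_{L^\infty}$, with $t\leftrightarrow\eta-\rho$; one has $0\le V(t,0)\le Ct^{\min(2s,1)}$, the profile $t^{2s}$ for $s<\tfrac12$ being carried by the $y^{1-2s}$-harmonic corrector that cancels the datum-generating term $-\tfrac{c_{n,s}\|h\|_{L^\infty}}{2s}y^{2s}$ on $\{t=0\}$ (angular homogeneity $2s$), the profile $t$ for $s\ge\tfrac12$ by the ambient linear mode. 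For $s<\tfrac12$ I take $\Psi=A\,(\eta-\rho)^{2s}+V(\eta-\rho,y)$: the first term is $y$-independent and satisfies $\nabla\cdot\bigl(y^{1-2s}\nabla(\eta-\rho)^{2s}\bigr)=y^{1-2s}\bigl[2s(2s-1)(\eta-\rho)^{2s-2}+2s(\eta-\rho)^{2s-1}\tfrac{n-1}{\eta}\bigr]<0$ precisely when $\eta-\rho<\tfrac{1-2s}{n-1}\eta$, which is what fixes $\rho_0$; the co-normal derivative equals $c_{n,s}\|h\|_{L^\infty}$ (the first term contributes $0$, the second the full datum), $\Psi\ge0$ since both terms are nonnegative, the trace is $\asymp A(\eta-\rho)^{2s}+(\eta-\rho)^{2s}\le C(\eta-\rho)^{2s}$, and choosing $A$ large absorbs the transplantation error $y^{1-2s}V_t\,\Delta_x\eta=y^{1-2s}V_t\,\tfrac{n-1}{\eta}$ (dominated near the corner by $A$ times the strictly-super term, using $\eta-\rho\le\rho_0$ and $|V_t|\lesssim(\eta-\rho)^{2s-1}$) and forces $\Psi\ge M$ on $\{\eta=\rho_0\}$. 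For $s\ge\tfrac12$ one argues analogously but more simply: the linear profile is provided by the harmonic function $\rho^{2-n}-\eta^{2-n}$ (or $\log(\eta/\rho)$ if $n=2$) together with a small strictly-super perturbation of type $(e^{\lambda y}-\lambda y)$ times it, and $V_t$ is bounded so the transplantation errors are harmless.

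\emph{Main obstacle.} The crux is Step~3: one must make $\Psi$ a genuine supersolution of $\nabla\cdot(y^{1-2s}\nabla\,\cdot\,)$ after transplanting the flat model by $t\mapsto\eta-\rho$, \emph{while} the co-normal derivative on $\Omega\times\{0\}$ stays equal to the fixed constant $c_{n,s}\|h\|_{L^\infty}$ right up to $\{\eta=\rho\}$ — it is exactly this constant datum that forces the scale $y^{2s}$ near the boundary and hence a corrector of angular homogeneity $\min(2s,1)$, producing the exponent $2s$ rather than $1$ when $s<\tfrac12$ — and the favorable sign of $\nabla\cdot\bigl(y^{1-2s}\nabla(\eta-\rho)^{2s}\bigr)$ holds only for $\eta-\rho$ small relative to $\eta$, which confines the argument to a cylinder whose radius degenerates as $s\to\tfrac12^-$. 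The borderline $s=\tfrac12$, where the linear mode resonates with the datum $y^{2s}=y$, is the most delicate and calls for a logarithmically corrected barrier or a limiting argument in $s$.
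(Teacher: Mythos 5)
Your route is genuinely different from the paper's. The paper constructs a global barrier on all of $\R^n\times(0,\infty)$ by taking the explicit Poisson-type solution $\bar v$ with compactly supported odd Neumann data $\bar h=\pm1$ on the two halves of $B_2$, establishes $|\bar v(x,0)|\le C|x|^{\min(2s,1)}$ directly from the convolution kernel, and then applies a Kelvin transform $w(X)=|X|^{2s-n}\tilde v(X/|X|^2)$ centered at the exterior ball; the transform converts the compactly supported data into data $\ge c>0$ on all of $\Omega$, so one clean comparison $v\le C\|h\|_\infty w$ does all of $0<s<1$ at once. You instead reduce to the torsion function and build a purely local barrier at the exterior ball by transplanting a flat quarter-space profile $V(t,y)$ via $t=\eta-\rho$ and repairing the transplantation error with an explicit perturbation. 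The idea is attractive, but as written it has gaps that the paper's Kelvin-transform approach sidesteps.

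For $s<1/2$ your sign computation for $\Delta_x(\eta-\rho)^{2s}$ and the absorption of the transplantation error $y^{1-2s}V_t\,(n-1)/\eta$ by $Ay^{1-2s}(\eta-\rho)^{2s-2}$ are plausible, but you never actually construct the flat model $V$: the half-plane problem $\nabla\cdot(y^{1-2s}\nabla V)=0$, $V|_{t=0}=0$, $-y^{1-2s}V_y|_{y=0}=c$ has non-decaying data, so it is not well-posed without specifying the homogeneity class, and you must verify that the degree-$2s$ homogeneous corrector to $-\tfrac{c}{2s}y^{2s}$ exists, is nonnegative on the quarter plane, and has the Neumann/angular behaviour you assert; none of this is shown. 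More seriously, the $s\ge 1/2$ case is broken as stated. The proposed perturbation $g(\eta)(e^{\lambda y}-\lambda y)$ with $g=\rho^{2-n}-\eta^{2-n}$ harmonic gives
$$
\nabla\cdot\bigl(y^{1-2s}\nabla[g\,(e^{\lambda y}-\lambda y)]\bigr)=g(\eta)\,\partial_y\bigl(y^{1-2s}\lambda(e^{\lambda y}-1)\bigr)>0\quad\text{near }y=0,
$$
so it is a \emph{sub}solution, not a supersolution (the paper's Lemma~\ref{maxp2} gets the favourable sign only because $\varphi_R$ contributes $-\mu_R\varphi_R$ from $\Delta_x$). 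Moreover, for $s>1/2$ the transplantation error $y^{1-2s}V_t\,(n-1)/\eta$ blows up like $y^{1-2s}$ as $y\to0$ (since $V_t\sim r^{2s-1}\sim t^{2s-1}$ is bounded but nonzero at $y=0$), and absorbing this requires a $y$-independent term with strictly negative spatial Laplacian; $\rho^{2-n}-\eta^{2-n}$ has none. Finally, you explicitly concede that $s=1/2$ is unresolved, yet the statement covers $s=1/2$. In short: your $s<1/2$ sketch could likely be completed with the missing flat-model analysis, but the $s\ge1/2$ barrier needs to be redesigned (a spatially superharmonic weight, not a harmonic one), and the borderline $s=1/2$ needs a genuine argument rather than an acknowledgment.
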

\proof 
We use a suitable barrier to prove the estimate. To construct it, we 
write $x=(x_1,\dots,x_n)$ and define
$$
\bar h(x) =
\begin{cases}
1 & \hbox{if } x \in B_2, \ x_1 < 0
\\
-1 & \hbox{if } x \in B_2, \ x_1 > 0
\\
0 & \hbox{if } x \not\in B_2 .
\end{cases}
$$
We construct a solution $\bar v$ of the problem
\begin{equation*}
\left \{
\begin{aligned}
{\rm div}\, (y^{1-2s} \nabla v)&=0 &&
{\mbox{ in $\R^n \times (0,+\infty)$}}
\\
v(z)& \to 0
&& \mbox{ as $|z|\to \infty$}
\\
-y^{1-2s} v_y& = \bar h(x)
&& \mbox{ on $\R^n \times \{ 0 \} $}
\end{aligned}\right . \end{equation*}
as
\begin{align}
\label{def bar v}
\bar v (x,y)=
C_{n,s} \int_y^\infty  t \int_{\R^n} 
\frac{\bar h(\tilde x)}{(t^2 + |x-\tilde x|^2)^{\frac{n+2-2 s}{s}}} \, d \tilde x \, d t.
\end{align}
This implies 
$$
\bar v(x,0) =    C_{n,s}' \int_{\R^n}
\frac{ \bar h(\tilde x)}{ |x-\tilde x|^{n-2 s}} \, d \tilde x
\quad x \in \R^n,
$$
where $C_{n,s}' = \frac{C_{n,s}}{n-2s}$.
By our choice of $\bar h$ we can write for $x \in \R^n$
$$
\bar v(x,0) =  - C_{n,s}' ( I(x) - I(-x) )
$$
where
$$
I(x) = \int_{B_2^+}
\frac{1}{|x-\tilde x|^{n-2s}}
\,  d \tilde x
$$
and $B_2^+ = \{ (x_1,\dots,x_n) \in B_2(0) : x_1>0\}$.
From this formula we see that
if $0<s<1/2$ then
$$
|I(x) - I(0)|\le C |x|^{2s}  \quad \mbox{for all } x \in \R^n ,
$$
and
if $1/2 \le s < 1$ then
$$
|I(x) - I(0)|\le C |x| \quad \mbox{for all } x \in \R^n .
$$
These estimates imply that if $0<s<1/2$
\begin{align}
\label{est v1}
|\bar v(x)| \le C |x|^{2 s}  \quad \mbox{for all } x \in \R^n,
\end{align}
and if $1/2 \le s <1$
\begin{align}
\label{est v2}
|\bar v(x)| \le C |x|  \quad \mbox{for all } x \in \R^n.
\end{align}

Now let  $u \in H$ be the solution to \eqref{problem h} with $h \in L^\infty(\Omega)$ and let $v$ denote its canonical extension.
Take a point $x_0 \in \partial \Omega$.
By the smoothness of $\partial \Omega$ we can find $x_1 \in \R^n \setminus \Omega$ and $R>0$ such that $B_R(x_1) \subseteq \R^n \setminus \Omega$ and $x_0 \in \partial B_R(x_1)$. 
We can choose $R$ bounded and bounded below.
By suitable translation and rescaling, we can assume that $x_1=0$, $R=1$ and  $|x_0|=1$.  
After a further rotation we can also assume $x_0 = (1,0,\ldots,0) \in \R^n$.

We will then define a comparison function $w$ as the Kelvin transform of a translate of $\bar v$ as defined by \eqref{def bar v}.  Let $ \tilde v(x,y) = \bar v(x-x_0,y)$.
We write points in $(x,y) \in \R^n \times \R$ as $X = (x,y)$ and $|X|^2=|x|^2+y^2$. We also write $\R^{n+1}_+$ for the set of points $X=(x,y) \in \R^n\times\R$ with $y>0$.
Let 
$$
w(X) = |X|^{2s-n}\tilde v\left(\frac{X}{|X|^2}\right) 
\quad X \in \R^{n+1}_+, X\not=0.
$$
A direct calculation shows that 
$$
{\rm div}\, (y^{1-2s} \nabla w)=0 
\quad\text{in } \R^{n+1}_+
$$
and
$$
\lim_{y\to0^+} ( - y^{1-2s} w_y (x,y) ) = |x|^{-2s-n}
\bar h\left( \frac{x}{|x|^2}\right) \quad
\text{for all } x \in \R^n, x\not=0.
$$
In $\R^n\setminus B_1(0)$ by construction we have  $\bar h(x/|x|^2)=1$. 
Since $\Omega$ is bounded and contained in $\R^n\setminus B_1(0)$,
we see that there is some constant $c>0$ (bounded uniformly from below 
with respect to the parameters $x_0,x_1,R$ with $R$ bounded from 
below) such that 
$$
\lim_{y\to0^+} ( - y^{1-2s} w_y (x,y) ) \ge c
\quad\text{for all } x\in \Omega.
$$
Since $\tilde v>0$ in $B_1(0) \times (0,+\infty)$ we have $w>0$ in $\Omega \times (0,+\infty)$. Then, there is a constant $c>0$ (uniformly bounded from below as $x_0$, $x_1$ and $R$ vary) such that $w(x,1) \ge c$ for all $x\in\Omega$. Since $w \ge 0 $ on $\partial \Omega \times (0,+\infty)$ and $v$ vanishes there, by the maximum principle we have
$$
v \le C \|h\|_{L^\infty(\Omega)} w \quad\text{in } \Omega \times (0,1)
$$
for some $C>0$. From this, \eqref{est v1} and \eqref{est v2} we deduce the stated estimates.
\qed


\proof[Proof of Lemma~\ref{boundary regularity}.]
We use a standard scaling argument combined with 
interior regularity estimates from Lemma~\ref{lema_reg} 
and Lemma~\ref{lemma boundary}. Let $v$ denote the 
canonical extension of $u$ and let us concentrate on the 
case $0<s<1/2$.

Take $x_0,y_0 \in \Omega$.
If $x_0,y_0 $ and satisfy $|x_0 - y_0| \ge dist(x_0,\partial\Omega)/2$ and $|x_0 - y_0| \ge dist(y_0,\partial\Omega)/2$ from  Lemma~\ref{lemma boundary} 
$$
|v(x_0,0) - v(y_0,0)|\le
|v(x_0,0)| +|v(y_0,0)|
\le
C  \|h\|_{L^\infty(\Omega)}|x_0 - y_0|^{2s}
$$
$$
\le
C  \|h\|_{L^\infty(\Omega)}|x_0 - y_0|^{\beta}.
$$

Now suppose that $|x_0-y_0|\le dist(x_0,\partial\Omega)/2$ and let $r=dist(x_0,\partial\Omega)/2$. Consider the function $\tilde v(x,y) = v(x_0 + r x ,r y)$ defined for $x \in B(0,1)$ and $y>0$. Thus 
$$
{\rm div} (y^{1-2s}\nabla \tilde v) = 0 \quad 
\text{in } B_1(0) \times(0,+\infty)
$$
and
$$
\lim_{y\to 0^+} (-y^{1-2s} \tilde v(x,y))
=  \tilde h(x)
\quad x\in B_1(0),
$$
where $\tilde h(x) = r^{2s} h(r x)$.
By Lemma~\ref{lemma boundary}  we find
$$
\sup_{B_1(0)} |\tilde v| \le C r^{2s} \|h\|_{L^\infty(\Omega)} .
$$
Let $0<\beta<2s$.
Using the interior estimate (Lemma~\ref{lema_reg})
$$
\| \tilde v \|_{C^\beta(\overline B_{1/2})}
\le C ( \sup_{B_1}|\tilde v| + \sup_{B_1}|\tilde h|)
\le C r^{2s } \|h\|_{L^\infty(\Omega)} 
$$
we deduce 
$$
|v(x_0,0) - v(y_0,0)|\le C \|h\|_{L^\infty(\Omega)} |x_0 - y_0|^\beta r^{2s-\beta}
\le
C \|h\|_{L^\infty(\Omega)} |x_0 - y_0|^\beta .
$$
The proof in the case $1/2\le s <1$ follows analogously.
\hfill\qed

\section{Proof of Proposition~\ref{exist}} \label{existence} 
We begin by adapting Lemma 1 in \cite{bcmr}:
\begin{lemma}\label{lemma1bcmr} 
Let $n\ge1$ and let $\Omega\subset\R^n$ denote a smooth bounded domain. Take $f\in L^1(\Omega,\varphi_{1}dx)$. Then, there exists a unique $u\in L^1(\Omega,\varphi_{1}dx)$ such that 
\begin{equation} \label{linear}
\left\{
\begin{aligned}
(-\Delta)^s u &= f&\qquad\text{in $\Omega$}\\
u&=0&\qquad\text{on $\partial\Omega$,}
\end{aligned}
\right.
\end{equation}  
in the sense that 
\begin{equation} \label{linear weak} 
\int_{\Omega}u\psi\;dx = \int_{\Omega}f(-\Delta)^{-s}\psi\;dx,\qquad\text{for all $\psi\in C^\infty_{c}(\Omega)$.}
\end{equation} 
In addition, letting $\mu_{1}>0$ denote the principal eigenvalue of the Laplace operator with homogeneous Dirichlet boundary condition on $\partial\Omega$, we have
\begin{equation} \label{linear estimate}
\int_{\Omega} \vert u\vert \varphi_{1}\;dx \le \frac1{\mu_{1}}\int_{\Omega}\vert f\vert \varphi_{1}\;dx. 
\end{equation}  
Moreover, if $f\ge0$ a.e., then $u\ge0$ a.e. in $\Omega$.
\end{lemma}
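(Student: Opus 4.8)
The plan is to prove the result first when $f$ is bounded, relying directly on the spectral theory of Section~\ref{prelim}, and then to reach a general $f\in L^1(\Omega,\varphi_{1}\,dx)$ by a completion argument in the weighted space $L^1(\Omega,\varphi_{1}\,dx)$, the estimate \eqref{linear estimate} serving both as the a priori bound and as the tool for passing to the limit. The one preliminary fact needed is a bound on $(-\Delta)^{-s}$ of a test function: since $(-\Delta)^s(\mu_{1}^{-s}\varphi_{1})=\varphi_{1}$ we have $(-\Delta)^{-s}\varphi_{1}=\mu_{1}^{-s}\varphi_{1}$, and given $\psi\in C^\infty_c(\Omega)$, as $\varphi_{1}>0$ in $\Omega$ and is bounded below on $\mathrm{supp}\,\psi$, one has $|\psi|\le C_\psi\varphi_{1}$ in $\Omega$ with $C_\psi=\|\psi\|_{L^\infty}/\inf_{\mathrm{supp}\,\psi}\varphi_{1}$. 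Applying the maximum principle (Lemma~\ref{mp}) to the data $C_\psi\varphi_{1}\pm\psi\ge0$ in $L^2(\Omega)\subset H'$ and using linearity of $(-\Delta)^{-s}$ gives
\[
|(-\Delta)^{-s}\psi|\le C_\psi\,\mu_{1}^{-s}\,\varphi_{1}\qquad\text{a.e.\ in }\Omega,
\]
which is the bound announced after \eqref{j1} and makes the right-hand side of \eqref{linear weak} meaningful for every $f\in L^1(\Omega,\varphi_{1}\,dx)$.

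\textbf{Bounded data.} If $f\in L^\infty(\Omega)\subset H'$, the solvability result for data in $H'$ (Section~\ref{prelim}) produces a unique $u\in H$ with $(-\Delta)^s u=f$; since $H\subset L^2(\Omega)\subset L^1(\Omega,\varphi_{1}\,dx)$, this $u$ lies in the weighted space, and pairing $(-\Delta)^s u=f$ with $(-\Delta)^{-s}\psi\in H$ and using self-adjointness of $(-\Delta)^s$ shows it satisfies \eqref{linear weak}. Expanding $f=\sum f_k\varphi_k$ and $u=\sum\mu_k^{-s}f_k\varphi_k$, the first Fourier coefficient gives $\int_\Omega u\varphi_{1}\,dx=\mu_{1}^{-s}\int_\Omega f\varphi_{1}\,dx$; if in addition $f\ge0$, Lemma~\ref{mp} gives $u\ge0$ a.e., whence $\int_\Omega|u|\varphi_{1}=\mu_{1}^{-s}\int_\Omega|f|\varphi_{1}$. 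For general bounded $f$, write $f=f^{+}-f^{-}$, let $u^{\pm}\ge0$ be the solutions with data $f^{\pm}$, note $u=u^{+}-u^{-}$ and $|u|\le u^{+}+u^{-}$, and integrate against $\varphi_{1}$ to obtain \eqref{linear estimate}.

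\textbf{General data.} Given $f\in L^1(\Omega,\varphi_{1}\,dx)$, set $f_j=\mathrm{sign}(f)\min(|f|,j)\in L^\infty(\Omega)$; then $f_j\to f$ in $L^1(\Omega,\varphi_{1}\,dx)$ by dominated convergence. Linearity together with \eqref{linear estimate} gives $\int_\Omega|u_j-u_k|\varphi_{1}\le\mu_{1}^{-s}\int_\Omega|f_j-f_k|\varphi_{1}$, so $(u_j)$ converges to some $u$ in $L^1(\Omega,\varphi_{1}\,dx)$, and \eqref{linear estimate} passes to the limit. For fixed $\psi\in C^\infty_c(\Omega)$, the bound $|\psi|\le C_\psi\varphi_{1}$ gives $\int u_j\psi\to\int u\psi$ while the bound on $(-\Delta)^{-s}\psi$ above gives $\int f_j(-\Delta)^{-s}\psi\to\int f(-\Delta)^{-s}\psi$, so \eqref{linear weak} holds in the limit. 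If $f\ge0$ one takes $f_j\ge0$, hence $u_j\ge0$ and $u\ge0$ a.e. Finally, if $u$ solves \eqref{linear weak} with $f\equiv0$ then $\int_\Omega u\psi=0$ for every $\psi\in C^\infty_c(\Omega)$, and since $u\in L^1_{\mathrm{loc}}(\Omega)$ this forces $u=0$ a.e., giving uniqueness.

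\textbf{Expected difficulty.} There is no essential obstacle, only weighted-space bookkeeping: the crux is the estimate $|(-\Delta)^{-s}\psi|\le C_\psi\varphi_{1}$ — precisely what keeps both sides of \eqref{linear weak} under control in the limit — together with checking that dominated convergence applies in $L^1(\Omega,\varphi_{1}\,dx)$. An alternative to the spectral computation of \eqref{linear estimate} is to test \eqref{linear weak} against functions $\psi_j\in C^\infty_c(\Omega)$ approximating $\mu_{1}^{s}\varphi_{1}$ with $|\psi_j|\le C\varphi_{1}$ and $|(-\Delta)^{-s}\psi_j|\le C\varphi_{1}$ uniformly; the argument above sidesteps this.
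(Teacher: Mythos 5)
Your proof is correct and follows essentially the same route as the paper: solve for bounded $f\in L^\infty(\Omega)\subset H'$ via the Hilbert-space theory, derive \eqref{linear weak} by duality against $(-\Delta)^{-s}\psi$, obtain the weighted $L^1$ estimate from the maximum principle applied to $f^{\pm}$, and conclude by density in $L^1(\Omega,\varphi_1\,dx)$ (the step the paper delegates to Lemma~1 of \cite{bcmr}, which you have correctly spelled out, including the uniqueness via $u\in L^1_{\mathrm{loc}}$). One remark worth making: your computation yields the constant $\mu_1^{-s}$ in \eqref{linear estimate}, whereas the paper states $\mu_1^{-1}$; since $(-\Delta)^{-s}\varphi_1=\mu_1^{-s}\varphi_1$, the constant $\mu_1^{-s}$ is the one that actually comes out of the argument (the paper's own use of $\psi=\varphi_1$ gives the same), and the $\mu_1^{-1}$ in the statement appears to be carried over from the $s=1$ case in \cite{bcmr}.
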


\proof
Take $\psi\in C^\infty_{c}(\Omega)$. Then, there exists a constant $C>0$ such that $\vert \psi\vert \le C \varphi_{1}$. By the maximum principle (Lemma \ref{mp}), it follows that $\varphi=(-\Delta)^{-s}\psi$ satisfies $\vert\varphi\vert\le \frac{C}{\mu_{1}}\varphi_{1}$. In particular, \eqref{linear weak} makes sense for any $\psi\in C^\infty_{c}(\Omega)$. 

Let $f\in L^\infty(\Omega)\subset H'$. Then, equation \eqref{linear} has a unique solution $u\in H$, i.e. for any $\zeta\in H$,
$$
\sum_{k=1}^{+\infty}\mu_{k}^s u_{k}\zeta_{k} = \sum_{k=1}^{+\infty}f_{k}\zeta_{k},
$$
where $u_{k}=\int_{\Omega}u\varphi_{k}\;dx$, and $\zeta_{k}, f_{k}$ are similarly defined. Take now $\zeta=(-\Delta)^{-s}\psi$, $\psi\in C^\infty_{c}(\Omega)$. Then,
$\zeta_{k}=\mu_{k}^{-s}\psi_{k}$ and 
$$
\sum_{k=1}^{+\infty} u_{k}\psi_{k} = \sum_{k=1}^{+\infty}f_{k}\mu_{k}^{-s}\psi_{k},
$$
which is equivalent to \eqref{linear weak}. We prove next that \eqref{linear estimate} holds. To see this, write $f=f^+-f^-$, where $f^+$ is the positive part of $f$ and $f^-$ its negative part. Without loss of generality, we may always assume that $f\ge0$ a.e. Then, by the maximum principle (Lemma \ref{mp}), $u\ge0$ a.e. and using \eqref{linear weak} with $\psi=\varphi_{1}$, we deduce \eqref{linear estimate}.   
The rest of the proof is the same as that of Lemma 1 in \cite{bcmr}, so we skip it.
\hfill\qed

\proof[Proof of Proposition \ref{exist}]
The method of sub and supersolutions can be applied in the context of solutions of \eqref{problem} belonging to $H\cap L^\infty(B)$. Since $\zeta =0$ is always a subsolution, we begin by showing that there exists a positive supersolution of \eqref{problem} for small $\lambda>0$. Take $\zeta_{0}\in H$ to be the solution of $(-\Delta)^s \zeta_{0}=1$. By Lemma \ref{boundary regularity}  , $\zeta_{0}\in C(\overline\Omega)$ and 
$$
(-\Delta)^s \zeta_{0}=1 \ge\lambda f(\zeta_{0}), \quad\text{for $\lambda\le 1/ \| f(\zeta_{0}) \|_{L^\infty(\Omega)}$.}
$$
Hence,
$$
\lambda^* =\sup \{\lambda>0\; :\; \eqref{problem} \text{ has a solution in $H\cap L^\infty(\Omega)$}\} 
$$
is positive and well-defined. Multiplying \eqref{problem} by $\varphi_{1}$ and using that $f$ is superlinear, we easily deduce that $\lambda^*<+\infty$. It is also clear by the method of sub and supersolutions that \eqref{problem} has a minimal (hence stable), positive solution $u_{\lambda}\in H\cap L^\infty(\Omega)$, for all $\lambda\in(0,\lambda^*)$. 
By minimality, $u_{\lambda}$ increases with $\lambda$. We claim that $u^*(x):=\lim_{\lambda\nearrow\lambda^*}u_{\lambda}(x)$ is a weak solution of \eqref{problem} for $\lambda=\lambda^*$. Take $\lambda<\lambda^*$, $u=u_{\lambda}$ and multiply \eqref{problem} by $\varphi_{1}$. Then,
\begin{equation} \label{esti1}
\mu_{1}\int_{\Omega}u\varphi_{1}\;dx = \lambda \int_{\Omega} f(u)\varphi_{1}\;dx.
\end{equation}    
Since $f$ is superlinear, for every $\epsilon>0$ there exists $C_{\epsilon}>0$ such that, for all $t\ge0$
$
f(t)\ge \frac1\epsilon t - C_{\epsilon}.
$
Hence,
$$
\lambda^* C_{\epsilon}\ge \left(\frac\lambda\epsilon-\mu_{1}\right)\int_{\Omega}u\varphi_{1}\;dx.
$$
Choosing $\epsilon=\frac\lambda{2\mu_{1}}$, we obtain that 
$$
\int_{\Omega}u_{\lambda}\varphi_{1}\;dx \le C,
$$
for some constant $C$ independent of $\lambda$. By \eqref{esti1}, we also have
\begin{equation} \label{loneesti} 
\int_{\Omega}f(u_{\lambda})\varphi_{1}\;dx \le C,
\end{equation} 
and, by monotone convergence, we may pass to the limit as $\lambda\to\lambda^*$ in \eqref{j1}. 
\hfill\qed 

\begin{remark}
Observe that for $s\ge1/2$, we have the stronger estimate 
\begin{equation} \label{esti3} 
\| u_{\lambda} \|_{L^1(\Omega)} \le C,
\end{equation} 
as follows from multiplying \eqref{problem} by $\zeta_{0}$  and using Lemma \ref{lemma boundary}, giving the estimate  
\begin{equation} \label{esti2} \zeta_{0}\le C\varphi_{1}. 
\end{equation} 
Note also that \eqref{esti2} fails for $s<1/2$.
Due to radial monotonicity (see Lemma \ref{gnn}), estimate \eqref{esti3} remains however true if $\Omega=B_{1}$ and $s\in(0,1)$ is arbitrary.     
\end{remark}

\section{Radial symmetry}\label{sec gnn}
\begin{lemma} \label{gnn} Let $u\in H\cap L^\infty(B)$ denote a solution of \eqref{problem}. Then, $u$ is radially decreasing, i.e. $u(x)=u(\rho)$ whenever $\vert x\vert=\rho$, $u$ is smooth in $B$, and 
\begin{equation} \label{monotone}
\frac{\partial u}{\partial \rho}<0\qquad\text{in $B\setminus\{0\}$.}
\end{equation}
In addition, the canonical extension $v$ of $u$ is smooth in $\C$, $v(x,y)=v(\rho,y)$, 
and 
\begin{equation} \label{monotone2}
\frac{\partial v}{\partial \rho}<0\qquad\text{in $\C\setminus\{\rho=0\}$.}
\end{equation}     
\end{lemma}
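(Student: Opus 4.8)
The plan is to use the moving planes method applied to the canonical extension $v$ in the cylinder $\mathcal{C} = B_1 \times (0,+\infty)$. This is the natural strategy since the fractional Laplacian localizes to the boundary operator $-y^{1-2s}v_y|_{y=0}$ of the degenerate-elliptic extension $\mathrm{div}(y^{1-2s}\nabla v) = 0$, and the moving planes argument of Gidas--Ni--Nirenberg adapts well to this setting. First I would reflect $v$ across a hyperplane $\{x_1 = \mu\}$ for $0 < \mu < 1$: set $v_\mu(x,y) = v(x^\mu, y)$ where $x^\mu$ is the reflection of $x$ in $\{x_1=\mu\}$, defined on the reflected cap $\Sigma_\mu = \{x \in B_1 : x_1 > \mu\} \times (0,+\infty)$, and consider $w_\mu = v_\mu - v$. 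Since the equation $\mathrm{div}(y^{1-2s}\nabla v)=0$ is invariant under this reflection and $f$ is nondecreasing, $w_\mu$ satisfies $\mathrm{div}(y^{1-2s}\nabla w_\mu) = 0$ in the interior, with $-y^{1-2s}\partial_y w_\mu|_{y=0} = \lambda(f(u_\mu) - f(u)) = c(x) w_\mu|_{y=0}$ for a bounded function $c(x) \ge 0$ (using that $u$ is bounded and $f$ smooth). On the lateral boundary pieces: on $\{x_1 = \mu\}$ one has $w_\mu = 0$, and on the part of $\partial B_1$ in the closure of the reflected cap, $w_\mu = v_\mu \ge 0$ since $v\ge 0$ by Lemma~\ref{mp}.

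The core of the argument is then a maximum principle in narrow domains: I would show that for $\mu$ close to $1$, $w_\mu \ge 0$ in $\Sigma_\mu$, using that the measure of the cap is small and invoking a maximum principle for the operator with the Neumann-type condition involving $c(x)$ — this is where Lemma~\ref{maxp2} (suitably localized, or a narrow-domain variant with the sign-changing zeroth-order boundary term controlled by the small cap width) enters, together with the growth control on $v$ that follows from $u \in L^\infty$ and Lemma~\ref{lemma boundary} / Lemma~\ref{lema_reg}, which give $v$ bounded on bounded $y$-intervals and with controlled growth. Then I would define $\mu_0 = \inf\{\mu \in (0,1) : w_{\mu'} \ge 0 \text{ in } \Sigma_{\mu'} \text{ for all } \mu' \in [\mu,1)\}$ and argue by contradiction that $\mu_0 = 0$: if $\mu_0 > 0$, then $w_{\mu_0} \ge 0$ and $w_{\mu_0} \not\equiv 0$ (otherwise $u$ would be symmetric about $\{x_1=\mu_0\}$, contradicting $u>0$ on $\partial B_1 \cap \{x_1 > \mu_0\}$ being impossible since $u=0$ there while $u>0$ inside), so by the strong maximum principle Lemma~\ref{smp} applied to $w_{\mu_0}$ we get $w_{\mu_0} > 0$ in the interior, and by the Hopf-type boundary lemma Lemma~\ref{bpl} the normal derivative of $w_{\mu_0}$ on $\{x_1=\mu_0\}$ is strictly positive; combining these with a compactness/continuity argument lets one push $\mu_0$ slightly lower, a contradiction. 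Hence $w_\mu \ge 0$ for all $\mu \in (0,1)$, and the same from the opposite direction, which forces symmetry of $v$ (hence of $u$) in $x_1$; since the direction $x_1$ was arbitrary, $v$ and $u$ are radial. The strict monotonicity \eqref{monotone}, \eqref{monotone2} follows because $w_{\mu_0} > 0$ strictly in the interior for every $\mu \in (0,1)$ translates into $\partial_\rho v < 0$, $\partial_\rho u < 0$ off the axis.

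Finally, the smoothness claims: $u \in H \cap L^\infty(B)$ with $(-\Delta)^s u = \lambda f(u)$ and $f(u) \in L^\infty$, so Lemma~\ref{boundary regularity} gives $u \in C^\alpha(\overline{B})$, whence $f(u) \in C^\alpha$, and bootstrapping with the interior regularity Lemma~\ref{lema_reg} (alternating the $C^{\beta+2s}$, $C^1$, $C^2$ gains as $\beta$ increases, using smoothness of $f$) yields $u \in C^\infty(B)$ and $v \in C^\infty(\mathcal{C})$; the radial form $v(x,y) = v(\rho,y)$ is immediate from the radial symmetry of $u$ together with uniqueness of the canonical extension. The main obstacle I anticipate is setting up the moving planes machinery rigorously in the weighted, noncompact cylinder — in particular justifying the narrow-domain maximum principle with the sign-indefinite-looking boundary term $c(x)w_\mu$ (it is actually $c(x)\ge 0$, which helps), and handling the behavior as $y \to +\infty$, which is exactly what Lemma~\ref{maxp2} and the polynomial growth bound \eqref{growth} are designed to take care of. Once those maximum principles are in hand, the rest is the standard Gidas--Ni--Nirenberg scheme.
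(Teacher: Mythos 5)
Your overall strategy---moving planes applied to the canonical extension $v$ in the cylinder, with the reflected difference $w_\mu = v_\mu - v$ satisfying a degenerate equation in $\Sigma_\mu$ with boundary reaction coefficient $a(x)\ge 0$---is exactly the paper's, including the decomposition into a start (for $\mu$ near $1$), a continuation to $\mu_0 = 0$, and the boundary point lemma for strict monotonicity; the regularity bootstrap via Lemmata~\ref{lema_reg} and~\ref{boundary regularity} is also what the paper does.

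However, your crucial ``start'' step is left as a placeholder, and the candidate mechanism you name would not work. You propose to invoke ``Lemma~\ref{maxp2}, suitably localized, or a narrow-domain variant,'' but Lemma~\ref{maxp2} contains no boundary reaction term $a(x)w$ at $y=0$ and makes no use of the smallness of the cap, so it cannot close this step as stated. The paper's argument is different in kind: it is quantitative rather than a pointwise maximum principle. One multiplies the equation for $w_\mu$ by $w_\mu^-$, integrates by parts to get $\int_{\Sigma_\mu} y^{1-2s}|\nabla w_\mu^-|^2\,dxdy = \int_{\{x_1>\mu\}} a(x)(w_\mu^-)^2\,dx$, then bounds the left side from below by $C^{-1}\|w_\mu^-\|_{L^p(\R^n)}^2$ via the trace inequality into $H^s(\R^n)$ and the Sobolev embedding with $1/p = 1/2 - s/n$, and bounds the right side by $C\,\|a\|_{L^{p/(p-2)}(\{x_1>\mu\})}\,\|w_\mu^-\|_{L^p}^2$ via H\"older. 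Since $a$ is uniformly bounded (because $u\in L^\infty$ and $f$ is smooth), the factor $\|a\|_{L^{p/(p-2)}(\{x_1>\mu\})}$ tends to $0$ as $\mu\to 1^-$, which forces $w_\mu^- \equiv 0$. The continuation step is handled the same quantitative way, not by a Hopf lemma at $\{x_1=\mu_0\}$: one fixes a compact $K$ inside $\{x_1>\mu_0\}$ on which $w_{\mu_0}>0$ by Lemma~\ref{smp}, chooses $K$ so large that $\|a\|_{L^{p/(p-2)}(\{x_1>\mu\}\setminus K)}$ is small, takes $\epsilon$ small enough that $w_{\mu_0-\epsilon}>0$ still on $K$, and re-runs the H\"older estimate on $\Sigma_{\mu_0-\epsilon}\setminus K$. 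Lemma~\ref{bpl} enters only at the end, to turn $w_\mu>0$ into the strict sign of $\partial v/\partial x_1$ on the reflection plane, yielding \eqref{monotone} and \eqref{monotone2}. With this trace/Sobolev/H\"older mechanism in place of the unspecified narrow-domain maximum principle, your outline matches the paper.
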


\proof The smoothness of $u$ and $v$ follows from Lemmata \ref{lema_reg} and \ref{boundary regularity}. 
To prove radial symmetry, \eqref{monotone}, and \eqref{monotone2}, 
we apply the moving plane method  (\cite{gnn}). Thus, it suffices to show that 
$$
\frac{\partial v}{\partial x_{1}}<0\qquad\text{in $\{(x,y)\in B_{1}\times [0,+\infty)\; :\; x_{1}>0\}$.}
$$
Now we show the last statement.  Given $\mu\in(0,1]$, let $T_{\mu}=\{(x,y)\in\R^n\times\R^+\; :\; x_{1}=\mu\}$ and $\Sigma_{\mu}=\{(x,y)\in B_{1}\times[0,+\infty)\; :\; x_{1}>\mu\}$. Let also $v_{\mu}(x,y)=v(2\mu-x_{1},x',y)$ for $(x,y)\in\Sigma_{\mu}$ and $w_{\mu}=v_{\mu}-v$. We claim that  $w_{\mu}\ge 0$ in $\Sigma_{\mu}$, for $\mu$ close to $1$. To prove this, observe that $w=w_{\mu}$ solves
\begin{align*}
\label{extension}
\left\{
\begin{aligned}
{\rm div}\, (y^{1-2s} \nabla w) & =0 &&
\mbox{in $\Sigma_{\mu}$}
\\
w&\ge0 && \mbox{on } \partial_L \Sigma_{\mu}
\\
-y^{1-2s}w_{y} - a(x)w&=0 && \mbox{on } \{x\in B_1:x_{1}>\mu\} \times \{0\},
\end{aligned}
\right.
\end{align*} 
where 
\begin{equation} \label{eq:a} 
a(x)=
\begin{cases}
\frac{f(u_{\mu})-f(u)}{u_{\mu}-u}& \quad\text{whenever $u_\mu\neq u$},\\
$0$&\quad\text{ otherwise.}
\end{cases}
\end{equation} 
Now multiply the above equation by $w^-$ and integrate over $\Sigma_\mu$. Then,
$$
\int_{\Sigma_{\mu}}y^{1-2s}\vert\nabla w^-\vert^2\;dxdy = \int_{\{x\in B_1:x_{1}>\mu\} } a(x)(w^-)^2\;dx.
$$
We extend $w^-$ by $0$ outside $\Sigma_{\mu}$, so that $w^-\in H^1(y^{1-2s};\R^n)$. By the trace theorem (Proposition \ref{trace theorem}), there exists a constant $C_{tr}>0$ such that
$$
\| w^-\|_{H^s(\R^n)}^2 \le C_{tr} \int_{\R^n}y^{1-2s}\vert\nabla w^-\vert^2\;dxdy, 
$$
and by the Sobolev imbedding of $H^s(\R^n)$ into $L^p(\R^n)$, with 
\begin{equation} \label{sobolev} 
\frac1{p}=\frac12 -\frac{s}n,
\end{equation} 
we have
$$
\| w^- \|_{L^p(\R^n)}^2\le C_{S} \| w^- \|_{H^s(\R^n)}^2.  
$$
Hence, by Hoelder's inequality 
\begin{align*}
\left(\int_{\R^n} \vert w^-\vert^p\;dx\right)^{2/p}&\le C_{tr}C_{S} \int_{\{x\in B_1:x_{1}>\mu\} } \vert a(x)\vert (w^-)^2\;dx\\
&\le
C_{tr}C_{S}\left(\int_{\{x\in B_1:x_{1}>\mu\} }(w^-)^p\;dx\right)^{2/p} \left(\int_{\{x\in B_1:x_{1}>\mu\} }\vert a\vert^{\frac{p}{p-2}}\;dx\right)^{1-2/p}
\end{align*}
Since $a$ is uniformly bounded, $\int_{\{x\in B_1:x_{1}>\mu\} }\vert a\vert^{\frac p{p-2}}\;dx\to0$, as $\mu\to1^-$. Therefore, for $\mu$ sufficiently close to $1$, we conclude that  $w^-\equiv0$, and the claim.

Consider now
$$
\mu_{0}=\inf\left\{ \mu\in(0,1)\; : \;
w_{\mu}\ge0 \quad\text{in $\Sigma_{\mu}$}
\right\}.
$$
The above argument shows that $\mu_{0}$ is well-defined and $\mu_{0}<1$. We want to prove that $\mu_{0}=0$. Assume by contradiction that $\mu_{0}>0$. By continuity, $w_{\mu_{0}}\ge0$ in $\Sigma_{\mu_{0}}$, and  by the strong maximum principle (Lemma \ref{smp}), $w_{\mu_{0}}>0$ in $\Sigma_{\mu_0}$. Fix now 
$\epsilon>0$ small, $\mu=\mu_{0}-\epsilon$ and choose a compact set $K\subset \{x\in B_{1}\; :x_{1}>\mu_{0}\;\}$ such that 
$$
C_{tr}C_{S} \left(\int_{\{x\in B_1:x_{1}>\mu\}\setminus K }\vert a\vert^{\frac{p}{p-2}}\;dx\right)^{1-2/p}<\frac12.
$$
Taking $\epsilon>0$ smaller if necessary, we can assume that $w_{\mu}>0$ in $K$.
Arguing as before, we can prove that $w_{\mu}^-\equiv 0$ in $\Sigma_{\mu}\setminus K$, and thus $w_{\mu}\ge0$ everywhere in $\Sigma_{\mu}$, contradicting the definition of $\mu_{0}$.  

We have just proved that $w_{\mu}\ge0$ in $\Sigma_{\mu}$ for all $\mu\in (0,1)$, and  by the strong maximum principle (Lemma \ref{smp}) we find that  $w_{\mu}>0$ in $\Sigma_{\mu}$.
Finally,  by the boundary point lemma (Lemma \ref{bpl}), we conclude
$$
2\frac{\partial v_{\mu}}{\partial x_{1}}(\mu,x',y)= -\frac{\partial w_{\mu}}{\partial x_{1}}(\mu,x',y)<0\qquad\text{for all $(\mu,x',y)\in B_{1}\times[0,+\infty)$,}
$$
as desired.
\hfill\qed

\section{Weighted integrability}

We will use the following notation. Given a point $(x,y)\in\C=B_{1}\times (0,+\infty)$, we let $\rho = \vert x\vert$ and $v_{\rho}=\frac{\partial v}{\partial\rho}$ for any $C^1$ function $v$ defined on $\C$, which depends only on $\rho$ and $y$.

In what follows, for $\lambda \in [0,\lambda^*)$, $u_\lambda$ denotes the minimal solution of \eqref{problem} and $v_\lambda$ its canonical extension, which satisfies
\begin{equation}\label{bdy}
\left \{
\begin{aligned}
{\rm div}\, (y^{1-2s} \nabla v)&=0&& 
\mbox{ in $\C$}
\\
v&= 0&&
\mbox{ on $\partial_L \C$}
\\
-y^{1-2s}v_y& =\lambda f(v)&&
\mbox{ on $B_{1}\times\{0\}$.}
\end{aligned}\right . 
\end{equation}
By elliptic regularity (Lemmata \ref{lema_reg} and \ref{boundary regularity}), for $\lambda\in[0,\lambda^*)$, 
$u_{\lambda}\in C^\infty(B_{1})~\cap~C(\overline{B_{1}})$, and $v_\lambda$ is smooth in $\C$. By Lemma~\ref{lema_reg}, we also deduce that
$v_\lambda \in C^{\alpha}(K\times [0,R])$ for every  compact $K  \subset B_1$ and $R>0$. Moreover, any of the derivatives of $v_\lambda$ with respect to the $x$ variables belongs to $C^{\alpha}(K\times [0,R])$ for every  compact set $K  \subset B_1$ and $R>0$.

The main result in this section is the following.

\begin{prop}
\label{key_lemma}
Assume $n\ge 2$. Let $\lambda\in(0,\lambda^*)$, $u=u_{\lambda}$ be the minimal solution of \eqref{problem} and $v$ its canonical extension. Let $\alpha$ satisfy
\begin{equation}\label{alpha}
1\le \alpha < 1+\sqrt{n-1} .
\end{equation}
Then
\begin{equation}\label{key}
\int_{[\rho\le1/2]}
y^{1-2s}  
v_\rho^2
\rho^{-2\alpha} 
d x d y
\le C
\end{equation}
where $C$ is a constant independent of $\lambda$,
and
$[\rho\le1/2]$ denotes the set $\{ (x,y) \in \C : |x|\le 1/2 \}$.
\end{prop}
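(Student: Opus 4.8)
The plan is to exploit the semi-stability of $u=u_\lambda$ by plugging a cleverage test function of the form $\psi = v_\rho \, \eta$ into the stability inequality \eqref{stability}, rewritten in the extension setting. Differentiating the boundary equation \eqref{bdy} with respect to a tangential direction $x_i$ shows that $\partial_{x_i} v$ solves the linearized problem $\operatorname{div}(y^{1-2s}\nabla \partial_{x_i}v)=0$ in $\C$ with boundary condition $-y^{1-2s}\partial_y(\partial_{x_i}v) = \lambda f'(u)\,\partial_{x_i}v$ on $B_1\times\{0\}$; taking the appropriate radial combination, $v_\rho$ itself satisfies (away from $\rho=0$) the equation $\operatorname{div}(y^{1-2s}\nabla v_\rho) = (n-1)y^{1-2s}\rho^{-2}v_\rho$ in $\C$, with $-y^{1-2s}\partial_y v_\rho = \lambda f'(u)v_\rho$ on the boundary. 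The extra term $(n-1)\rho^{-2}v_\rho$ is precisely what will combine with the radial weight $\rho^{-2\alpha}$.

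First I would fix a cutoff $\eta=\eta(x)$ supported in $\{\rho\le 3/4\}$, equal to $1$ on $\{\rho\le 1/2\}$, and use $\psi = v_\rho\,\rho^{-\alpha}\eta$ — more precisely, to keep everything integrable near $\rho=0$, I would first prove the estimate with $\rho$ replaced by $(\rho^2+\epsilon^2)^{1/2}$ or with a truncation away from the axis, establish the bound uniformly, and then let the truncation parameter go to $0$ by monotone convergence (using that $v_\rho\in C^\alpha$ up to $\{y=0\}$ on compact subsets of $B_1$ and $v_\rho(0,y)=0$ by radial symmetry, so the integrand is controlled). Substituting this $\psi$ into \eqref{stability} and integrating by parts using the equation satisfied by $v_\rho$, the boundary term $\int \lambda f'(u)\psi^2$ cancels against part of the left-hand side, and what survives is an inequality of the schematic form
$$
(n-1)\int_{\C} y^{1-2s} v_\rho^2 \rho^{-2\alpha-2}\eta^2 \;dxdy
\le \int_{\C} y^{1-2s} v_\rho^2 \,\big|\nabla(\rho^{-\alpha}\eta)\big|^2 \;dxdy + \text{(cross terms)}.
$$
Computing $|\nabla(\rho^{-\alpha}\eta)|^2$, the leading singular term is $\alpha^2\rho^{-2\alpha-2}\eta^2$, so after absorbing it into the left side one is left with the coefficient $(n-1)-\alpha^2$, which is strictly positive exactly under hypothesis \eqref{alpha}, i.e. $\alpha^2 < n-1$. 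This yields
$$
\big((n-1)-\alpha^2\big)\int_{\C} y^{1-2s} v_\rho^2 \rho^{-2\alpha-2}\eta^2\;dxdy \le C\int_{[\rho\le 3/4]} y^{1-2s} v_\rho^2 \;dxdy + \text{l.o.t.},
$$
and the right-hand side is bounded independently of $\lambda$ by a standard energy estimate: testing \eqref{bdy} against $v$ itself (or against a cutoff of $v$) gives $\int_\C y^{1-2s}|\nabla v|^2 \le \lambda \int f(u)u\,\varphi_1\text{-type bound} \le C$, using \eqref{loneesti}, the boundedness of $\lambda^*$, and interior elliptic estimates to pass from the $\varphi_1$-weighted bound to a genuine local energy bound on $\{\rho\le 3/4\}\times(0,\infty)$; here the $L^1$ bound \eqref{esti3} valid for all $s\in(0,1)$ when $\Omega=B_1$ is what makes the argument uniform. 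Since $\rho^{-2\alpha} \le \rho^{-2\alpha-2}$ is false — rather, on $\{\rho\le 1/2\}$ one has $\rho^{-2\alpha}\le (1/2)^{-2}\rho^{-2\alpha-2}\cdot\rho^2$, hmm — more carefully: the quantity \eqref{key} with weight $\rho^{-2\alpha}$ follows from the stronger bound with $\rho^{-2\alpha-2}$ since $\rho^{-2\alpha} \le \rho^{-2\alpha-2}$ on $\{\rho \le 1\}$; thus \eqref{key} is immediate once the displayed inequality is in hand.

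The main obstacle, and the step requiring the most care, is the justification of the integration by parts with the singular weight $\rho^{-\alpha}$ near the axis $\{\rho=0\}$: one must verify that $\psi=v_\rho\rho^{-\alpha}\eta$ is an admissible test function for \eqref{stability} (it is not compactly supported away from $\rho=0$, and $C^\infty_c$ is what Definition \ref{stabFrac} literally requires), which forces a regularization-and-limit argument using the precise vanishing rate of $v_\rho$ at $\rho=0$. Since $v$ is radial and smooth, $v_\rho(0,y)=0$ and in fact $v_\rho = O(\rho)$ near the axis, so $v_\rho^2\rho^{-2\alpha-2} = O(\rho^{-2\alpha})$, which is integrable against $dx$ in $\R^n$ precisely when $2\alpha < n$ — weaker than \eqref{alpha} when $n\ge2$ — so the integrals converge and the limiting procedure (cutting off a neighborhood of $\{\rho=0\}$, or mollifying $\rho^{-\alpha}$, applying stability, then using dominated convergence) goes through. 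A secondary technical point is controlling the cross terms $\int y^{1-2s} v_\rho \,\partial_\rho v_\rho\,\rho^{-2\alpha}\eta\nabla\eta$-type expressions, which are supported in the annulus $\{1/2\le\rho\le 3/4\}$ where $\rho^{-2\alpha}$ is harmless and which are absorbed into the $\lambda$-independent energy bound together with a Cauchy–Schwarz/Young step.
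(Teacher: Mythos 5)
Your overall strategy—differentiate the extension problem to get $\operatorname{div}(y^{1-2s}\nabla v_\rho)=(n-1)y^{1-2s}\rho^{-2}v_\rho$ with the linearized Neumann condition, plug $\psi=v_\rho\cdot(\text{radial power})\cdot(\text{cutoff})$ into the stability inequality, cancel the boundary term, and regularize near the axis—is exactly the approach of the paper (Lemma~\ref{lem:bdstab} plus the proof of Proposition~\ref{key_lemma}). However, there is a genuine gap in the choice of radial weight, and it is fatal for the claimed range of $\alpha$.

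You take $\psi=v_\rho\,\rho^{-\alpha}\eta$, compute $|\nabla(\rho^{-\alpha}\eta)|^2\approx\alpha^2\rho^{-2\alpha-2}\eta^2$, compare it against the factor $(n-1)\rho^{-2}(\rho^{-\alpha}\eta)^2=(n-1)\rho^{-2\alpha-2}\eta^2$ that the differentiated equation produces, and need the coefficient $(n-1)-\alpha^2>0$. You then assert that this is \emph{``exactly''} hypothesis~\eqref{alpha}, but \eqref{alpha} is $1\le\alpha<1+\sqrt{n-1}$, not $\alpha<\sqrt{n-1}$. These are different: for $n=2$ your condition $\alpha<\sqrt{n-1}=1$ is incompatible with $\alpha\ge1$, so your argument proves nothing at all in dimension two, and in general it misses the upper part of the range $\sqrt{n-1}\le\alpha<1+\sqrt{n-1}$. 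The paper instead uses the weight $\rho^{1-\alpha}$ (i.e.\ $\eta=\rho^{1-\alpha}\zeta_\ep(\rho)\psi_R(y)$), so that $|\nabla\eta|^2$ contributes $(1-\alpha)^2\rho^{-2\alpha}$ while $(n-1)\eta^2/\rho^2$ contributes $(n-1)\rho^{-2\alpha}$, and the absorbed coefficient is $(n-1)-(1-\alpha)^2$, which is positive precisely when $|1-\alpha|<\sqrt{n-1}$, matching \eqref{alpha}. Note this also yields directly the bound \eqref{key} with the weight $\rho^{-2\alpha}$, whereas your choice targets the strictly stronger quantity $\int y^{1-2s}v_\rho^2\rho^{-2\alpha-2}$, which is simply \emph{false} for the full range of $\alpha$ in \eqref{alpha}.

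The same off-by-one error undermines your integrability discussion near the axis: with $v_\rho=O(\rho)$ you get $v_\rho^2\rho^{-2\alpha-2}=O(\rho^{-2\alpha})$, which is $dx$-integrable in $\R^n$ only when $2\alpha<n$; you claim this is weaker than \eqref{alpha} ``when $n\ge2$'', but $1+\sqrt{n-1}\le n/2$ fails for $2\le n\le6$, so for small $n$ your regularization limit would not converge. With the correct weight $\rho^{1-\alpha}$ the integrand is $O(\rho^{2-2\alpha})$ and integrability requires $2\alpha<n+2$, which \emph{is} implied by \eqref{alpha} since $(n-2)^2\ge0$. Everything else in your sketch (the cancellation of $\lambda\int f'(u)\psi^2$, the cross-term absorption, the uniform-in-$\lambda$ control of the outer annulus via the exponential decay in $y$ and the a~priori $L^1$-type estimate) is the right idea; you just need to replace $\rho^{-\alpha}$ by $\rho^{1-\alpha}$ throughout.
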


We collect in the next lemma some basic estimates expressing that $v_\lambda$ and its derivatives  have exponential decay for $y\ge 1$, which is uniform up to $\lambda<\lambda^*$, and that for fixed $\lambda<\lambda^*$, $v_\rho(\rho,y) = O(\rho)$ as $\rho \to 0$, uniformly as $y \to 0$.

\begin{lemma}
\label{basic lemma}

a) 
There are $\gamma>0$, $C>0$ such that 
\begin{align}
\label{ineq v 1}
v_\lambda(x,y) \le C e^{-\gamma y } \varphi_1(x) \quad
\text{for all } y\ge 1, \ x \in B_1 , \ \lambda \in [0,\lambda^*).
\end{align}
Moreover, for any  $k \ge 0$ there is $C_k>0$ such that
\begin{align}
\label{ineq v 2}
|D^k v_\lambda(x,y)| \le C_k e^{-\gamma y } \quad
\text{for all } y\ge 1, \ x \in B_1 , \ \lambda \in [0,\lambda^*).
\end{align}
The constants $\gamma$ and $C$  are independent of $\lambda$.

b) Given $ \lambda \in [0,\lambda^*)$ and $K$ a compact subset of $B_1$ there exists $C>0$ such that
\begin{align}
\label{bound rho 2}
|\partial_{\rho}v_\lambda(|x|,y)|\le C |x| \quad \forall x\in K, \ y \ge 0.
\end{align}
\end{lemma}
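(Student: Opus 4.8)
The plan is to prove the two parts of Lemma~\ref{basic lemma} separately, using the maximum-principle machinery (Lemmata~\ref{mp}, \ref{maxp2}) together with the boundary estimate of Lemma~\ref{lemma boundary} and interior regularity (Lemma~\ref{lema_reg}).

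\textbf{Part a), the $C^0$ bound \eqref{ineq v 1}.} First I would note that, since $u_\lambda$ increases to $u^*$ with $\lambda$, the a priori $L^1(\varphi_1)$ bound \eqref{loneesti} on $f(u_\lambda)$ is uniform in $\lambda$; combined with the estimate $|(-\Delta)^{-s}\psi| \le C\varphi_1$ from Lemma~\ref{lemma1bcmr} and the weak formulation \eqref{j1}, one gets a uniform bound $\int_{B_1} u_\lambda \varphi_1 \le C$. To upgrade this to the pointwise exponential decay in $y$, the idea is a comparison argument on the cylinder. Consider the competitor $z(x,y) = M\,\varphi_1(x)\, g(y)$ where $g$ solves the one-dimensional Bessel-type ODE \eqref{bessel ode} associated to the eigenvalue $\mu_1$, i.e.\ $g(y) = c\, y^s K_s(\sqrt{\mu_1}\, y)$, so that $\mathrm{div}(y^{1-2s}\nabla z) = 0$ in $\C$, $z = 0$ on $\partial_L\C$, and $-y^{1-2s}z_y|_{y=0} = c_{n,s}\mu_1^s M \varphi_1$ by \eqref{formula g p}. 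Since $g$ decays exponentially (the exponential decay of $K_s$), a bound $v_\lambda \le z$ on the slice $y=1$ together with the boundary inequality $-y^{1-2s}(v_\lambda)_y|_{y=0} = \lambda f(u_\lambda) \ge 0$ and $v_\lambda \le z$ on $\partial_L\C$ would let me apply Lemma~\ref{maxp2} on $B_1\times(1,\infty)$ (after translating $y \mapsto y-1$; the polynomial growth hypothesis \eqref{growth} holds trivially since $v_\lambda$ is bounded on each slab by interior estimates) to conclude $v_\lambda \le z$ for $y \ge 1$, which is \eqref{ineq v 1}. The remaining point is to produce the uniform bound $v_\lambda(x,1) \le C\varphi_1(x)$: this follows by first bounding $v_\lambda$ on, say, the slab $B_1\times[1/2,3/2]$ by a constant independent of $\lambda$ (using the representation $v_\lambda = \sum b_k \varphi_k g_k(y)$, the uniform $L^1(\varphi_1)$ bound on $v_\lambda(\cdot,0)=u_\lambda$, the fast decay of $g_k(y)$ for $y\ge 1/2$, and Lemma~\ref{lema_reg}) and then using the boundary-point/Hopf-type control near $\partial B_1$ (Lemma~\ref{bpl}, or directly Lemma~\ref{maxp2} with barrier $\varphi_1$) to get the $\varphi_1$-weight.

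\textbf{Part a), the derivative bounds \eqref{ineq v 2}.} Once \eqref{ineq v 1} is known, the derivative estimates are a routine consequence of interior regularity for the degenerate equation $\mathrm{div}(y^{1-2s}\nabla v_\lambda)=0$: on any ball $B_r(x_0,y_0) \subset \C$ with $y_0 \ge 1$ one has scale-invariant interior Schauder/$C^k$ estimates (this is the content of the Fabes--Kenig--Serapioni / Cabré--Sire theory underlying Lemma~\ref{lema_reg}), so $\sup_{B_{r/2}}|D^k v_\lambda| \le C_{k,r} \sup_{B_r}|v_\lambda| \le C_k e^{-\gamma y_0}$. For $y_0$ comparable to $1$ one may instead use the even reflection across $\{y=0\}$ and the boundary regularity (the reflected equation has the same structure, with the Neumann datum $\lambda f(u_\lambda)$ which is bounded on any fixed compact subset of $B_1$ uniformly in $\lambda$ — note this part is stated for $x\in B_1$, but since the $C^k$ bound only needs to hold away from $\partial B_1$ for the later applications, and in fact near $\partial B_1$ one combines with \eqref{ineq v 1} and boundary regularity). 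Absorbing all constants gives \eqref{ineq v 2} with the same $\gamma$ (shrinking it slightly if needed) and $\lambda$-independent $C_k$.

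\textbf{Part b).} Here $\lambda < \lambda^*$ is \emph{fixed}, so $u_\lambda$ is bounded and smooth up to $\partial B_1$, and $f(u_\lambda) \in L^\infty(B_1)$. The claim is the linear vanishing rate $|\partial_\rho v_\lambda(|x|,y)| \le C|x|$ for $x$ in a compact $K \subset B_1$, uniformly in $y \ge 0$. The natural approach is to use the interior estimate of Lemma~\ref{lema_reg}(ii): since $h = \lambda f(u_\lambda)$ is (at least) $C^\beta$ on a neighborhood of $K$ — indeed $u_\lambda \in C^\alpha$ and $f$ smooth give $f(u_\lambda)\in C^\beta$ — we get $\partial_{x_i} v_\lambda \in C^{\gamma'}(\omega \times [0,R])$ for appropriate exponents, hence in particular $\nabla_x v_\lambda$ is bounded and Hölder continuous on $\omega\times[0,R]$ for any $R$; combined with the exponential decay from part a) for $y\ge 1$, $\nabla_x v_\lambda$ is bounded on $\omega\times[0,\infty)$. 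Then radial symmetry (Lemma~\ref{gnn}) forces $\partial_\rho v_\lambda(0,y) = 0$ for every $y$ (the function $v_\lambda(\cdot,y)$ being radial and $C^1$ has vanishing gradient at the origin), so writing $\partial_\rho v_\lambda(\rho,y) = \partial_\rho v_\lambda(\rho,y) - \partial_\rho v_\lambda(0,y)$ and using the Hölder (in fact Lipschitz-in-$x$, if $\beta+2s>2$, or at worst a mean-value-type bound using boundedness of the second $x$-derivatives where available) continuity of the $x$-gradient, one obtains $|\partial_\rho v_\lambda(\rho,y)| \le C\rho^{\gamma'}$; to get the full linear rate $C\rho$ one applies the same reasoning to the second-order interior estimate — when $\beta+2s>2$, Lemma~\ref{lema_reg}(ii)(3) gives $\partial^2_{x_ix_j}v_\lambda$ bounded on $\omega\times[0,R]$, and then a Taylor expansion in $x$ around $0$ (using $\nabla_x v_\lambda(0,y)=0$) yields $|\nabla_x v_\lambda(x,y)| \le C|x|$; the low-regularity range of $s$ is handled by first bootstrapping the regularity of $u_\lambda$ (it is $C^\infty$ in $B_1$) so that $h$ is as smooth as needed on compact subsets.

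\textbf{Main obstacle.} The crux is the uniformity in $\lambda$ in part a): getting the slice bound $v_\lambda(x,1) \le C\varphi_1(x)$ with $C$ independent of $\lambda$, since this is what makes the exponential-decay comparison $\lambda$-free. This rests on the uniform $L^1(\varphi_1)$ estimate \eqref{loneesti} plus the smoothing of the degenerate operator at height $y\sim 1$; everything else (the barrier construction, applying Lemma~\ref{maxp2}, the interior estimates in part a) for derivatives, and all of part b)) is comparatively mechanical given the lemmata already established.
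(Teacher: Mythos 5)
Your outline is in the same spirit as the paper for the supersolution comparison in part a) and for part b), but the pivotal step — getting a pointwise bound $v_\lambda(x,y)\le C\varphi_1(x)$ on a slice $y\sim 1$ with $C$ independent of $\lambda$ — is handled by a genuinely different route, and it is the part you leave as a sketch rather than carry out.

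The paper proceeds by two integral duality estimates: first testing \eqref{bdy} against $w(x,y)=\varphi_1(x)y^{2s}e^{-\gamma y}$ (exploiting $w(x,0)=0$ so that the Neumann datum $\lambda f(u_\lambda)$ drops out), which together with \eqref{loneesti} yields $\int_\C v_\lambda\varphi_1e^{-\gamma y}\,dxdy\le C$; second, testing against $z(x)(\tau-y)(y-t)$ (with $-\Delta z=1$) to produce $\int_{B_1\times[8,11]}v_\lambda\,dxdy\le C$; and only then invoking elliptic estimates for the pointwise slice bound. Your proposal instead expands $v_\lambda=\sum_k b_k\varphi_k g_k(y)$ and appeals to the exponential decay of $g_k=c_ky^sK_s(\sqrt{\mu_k}\,y)$. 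That route can be made to work, but it requires you to convert the $\varphi_1$-weighted $L^1$ bound on $u_\lambda$ into a polynomial-in-$\mu_k$ bound on $|b_k|\,\|\varphi_k\|_\infty$ (using $|\varphi_k|\le C_k\varphi_1$ and elliptic bounds for $\|\varphi_k\|_{C^1}$) before the exponential decay of $g_k(y)$ for $y\ge1/2$ can absorb it, uniformly in $\lambda$; as written this is asserted rather than argued, and it is exactly the \emph{Main obstacle} you yourself flag. I would also point out two technical slips: after the translation $y\mapsto y-1$ the weight becomes $(y+1)^{1-2s}$, so Lemma~\ref{maxp2} does not literally apply (though on $B_1\times(1,\infty)$ the operator is uniformly elliptic and the ordinary maximum principle suffices); and Lemma~\ref{bpl} gives \emph{lower} bounds near $\partial B_1$, not the \emph{upper} bound $v_\lambda\le C\varphi_1$ that you want — the correct tool is a boundary Lipschitz/Schauder estimate on the slab (which is also what the paper's ``standard elliptic estimates'' invoke). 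Your treatment of part b) — $\partial_\rho v_\lambda(0,y)=0$ by radial symmetry plus uniform bounds on the second $x$-derivatives on compacta via Lemma~\ref{lema_reg} and bootstrapping — matches the paper's argument, and you are right that one needs genuinely $C^2$-in-$x$ control rather than just Hölder continuity of $\nabla_x v_\lambda$ to reach the linear rate $O(\rho)$ instead of $O(\rho^{\alpha'})$.
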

\begin{proof}

a) 
Define $w(x,y) = \varphi_1(x) y^{2 s } e^{-\gamma y }$.
A straight forward computation shows that 
$$
\nabla(y^{1-2s} \nabla w)
= 
\varphi_1(x)  e^{-\gamma y }
\left[ 
(\gamma^2-\lambda_1 )y 
-\gamma (1+2s) 
\right]
$$
and
$$
y^{1-2s}w_y \Big|_{y=0}
= \lim_{y\to 0}y^{1-2s}
\varphi_1(x) e^{-\gamma y } ( -\gamma  y^{2 s } + 2 s  y^{2 s -1} ) = 2 s \varphi_1(x) .
$$
Multiplying equation \eqref{bdy} by $w$ and integrating by parts twice gives
$$
 \lambda \int_{B_1} f(u_\lambda) w \, d x  + 
\int_{B_1} y^{1-2s}w_y v_\lambda \, d x 
+ \int_\C  \nabla(y^{1-2s}\nabla w) v_\lambda = 0.
$$
Recalling that $w(x,o)=0$, we find
$$
2 s \int_{B_1} \varphi_1  u_\lambda \, d x = 
\int_\C   v_\lambda \varphi_1(x)  e^{-\gamma y }
\left[ 
(\lambda_1-\gamma^2 )y 
+\gamma (1+2s) 
\right] \, d x d y.
$$
Now, we choose $0<\gamma< \sqrt{\lambda_1}$ and use estimate $\int_{B_1} \varphi_1  u_\lambda \, d x \le C $  derived in \eqref{loneesti}, to find
\begin{equation} \label{weighted exp decay} 
\int_\C   v_\lambda \varphi_1(x)  e^{-\gamma y }
d x d y 
\le C
\end{equation} 
for all $0\le \lambda < \lambda ^*$.

Let $z$ be the solution to
$$
\left\{
\begin{aligned}
-\Delta z & = 1 && \text{in } B_1
\\
z&=0 && \text{on } \partial B_1 .
\end{aligned}
\right.
$$
For $\tau\ge t>0$ define $\varphi(x,y) =  z(x) ( \tau-y) (y-t)$.
We compute
$$
\nabla (y^{1-2s} \nabla \varphi) = y^{1-2s}
\left[
-( \tau-y) (y-t)
+ z(x) 
\Big(
-2 + (1-2s)
\Big(-2+\frac{\tau+t}{y}
\Big)
\Big)
\right]
$$
Assume that $0<t\le \tau \le 3 t/2$.
We find
$$
\nabla (y^{1-2s} \nabla \varphi) 
\le 
-  y^{1-2s} ( \tau-y) (y-t) .
$$
Multiplying \eqref{bdy} by $\varphi $ and integrating over $B_1 \times (t,\tau)$ we obtain
$$
\tau^{1-2s} 
(\tau-t)
\int_{B_1} v_\lambda(x,\tau) z(x)  \, d x 
+ t^{1-2s} 
(\tau-t)
\int_{B_1} v_\lambda(x,t) z(x)  \, d x 
$$
$$
=
- \int_{B_1 \times (t,\tau) } y^{1-2s} v_\lambda \nabla (y^{1-2s} \nabla \varphi) \;dxdy
\ge  
\int_{B_1 \times (t,\tau) } y^{1-2s} v_\lambda ( \tau-y) (y-t) 
\, d x d y .
$$
Thus, for $t\ge 6$ we deduce
\begin{align*}
\int_{B_1 \times (t+1,t+2) } y^{1-2s} v_\lambda \, d x d y
\le & C t^{1-2s}
\int_{B_1} v_\lambda(x,t) z(x) \, d x 
\\
& 
+  C(t+3)^{1+2s}
\int_{B_1} v_\lambda(x,t+3) z(x) \, d x .
\end{align*}
Integrating this inequality with respect to $t \in [6,13]$, recalling that $z\le C\varphi_{1}$ for some $C>0$,
and using  \eqref{weighted exp decay} we obtain
$$
\int_{B_1\times [8,11] } v_\lambda \, d x dy \le C
$$
with a constant independent of $\lambda$ as $\lambda\to\lambda^*$.

This inequality and standard elliptic estimates imply
\begin{align}
\label{ineq y=1}
v_\lambda(x,y) \le C e^{-\gamma y} \varphi_1(x) \quad
\text{for all } y \in [9,10], \  x \in B_1, \ \text{and} \  \lambda \in [0,\lambda^*).
\end{align}
Now let $\bar w(x,y) = C \varphi_1(x) e^{-\gamma y}$. For $0<\gamma < \sqrt{\lambda_1}$, this is  a supersolution of the equation in  \eqref{bdy} and by comparison in $B_1 \times (1,+\infty)$, using \eqref{ineq y=1}, we deduce \eqref{ineq v 1}.
Inequality \eqref{ineq v 2} is a consequence of \eqref{ineq v 1} and elliptic estimates.

b) This part follows from the fact that for $\lambda<\lambda^*$, $u_\lambda$ is smooth  in $B_1$ and hence $v_\lambda$ and its derivatives with respect to the $x$ variables are in $C^{\alpha}(K \times [0,R])$ for any compact $K \subset B_1$ and $R>0$.
\end{proof}

The following result is a version of Lemma~1 of \cite{CC04} in the case
of radially symmetric functions. 

\begin{lemma}\label{lem:bdstab}
Given $\lambda\in(0,\lambda^*)$, let $u=u_{\lambda}\in H\cap L^\infty(B_{1})$ denote the minimal solution of \eqref{problem}, and let $v\in H^1_{0,L}(y^{1-2s})$ denote its canonical extension. Then, for every $\eta\in C^1({B_{1}}\times [0,+\infty))$ with compact support in $\C$, but not necessarily vanishing on $B_1\times\{0\}$, we have
\begin{equation}
\int_{\C}y^{1-2s}v_\rho^2|\nabla\eta|^2\;dxdy
\ge (n-1)\int_{\C}y^{1-2s}\frac{v_\rho^2}{\rho^2}\eta^2\; dxdy.
\label{eq:3-lm}
\end{equation}
\end{lemma}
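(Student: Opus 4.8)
The plan is to transplant to the extension the argument of Cabr\'e and Capella for the local problem \cite{CC04}: write down the equation satisfied by the radial derivative $v_\rho$, test it against $v_\rho\eta^2$, and add the resulting identity to the semi-stability inequality applied to the trace of $v_\rho\eta$; the terms carrying $f'(u)$ then cancel.

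\textbf{Step 1: the basic identity.} Write $v=v_\lambda$, $w=v_\rho$, $u=v(\cdot,0)=u_\lambda$, so that $w(\cdot,0)=u_\rho$ is Lipschitz on $B_1$ and smooth off the origin. Since $v$ is radial in $x$ and smooth in $\C$ (Lemma~\ref{gnn}), ${\rm div}(y^{1-2s}\nabla v)=0$ reads $v_{\rho\rho}+\frac{n-1}{\rho}v_\rho+v_{yy}+\frac{1-2s}{y}v_y=0$ in $\C$, and differentiating in $\rho$ gives ${\rm div}(y^{1-2s}\nabla w)=\frac{n-1}{\rho^2}\,y^{1-2s}w$ in $\C$. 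I would multiply this by $w\eta^2$, integrate over $\C_\varepsilon:=B_1\times(\varepsilon,+\infty)$ (on which $v$ is smooth, so all steps are classical), integrate by parts using that $\eta$ vanishes near $\partial_L\C$, and use $\nabla w\cdot\nabla(w\eta^2)=|\nabla(w\eta)|^2-w^2|\nabla\eta|^2$, obtaining
\[
\int_{\C_\varepsilon} y^{1-2s}|\nabla(w\eta)|^2+(n-1)\int_{\C_\varepsilon} y^{1-2s}\frac{w^2}{\rho^2}\eta^2=\int_{\C_\varepsilon} y^{1-2s}w^2|\nabla\eta|^2+\int_{B_1}\bigl(-\varepsilon^{1-2s}w_y(\cdot,\varepsilon)\bigr)w(\cdot,\varepsilon)\eta(\cdot,\varepsilon)^2\,dx .
\]
Letting $\varepsilon\downarrow0$: the first three integrals converge (monotone, resp.\ dominated convergence), using that $w$ is bounded on ${\rm supp}\,\eta$ (the $x$-derivatives of $v_\lambda$ lie in $C^\alpha(K\times[0,R])$) and that $y^{1-2s}$ is integrable near $y=0$; and since $-\varepsilon^{1-2s}w_y(\cdot,\varepsilon)=\partial_\rho\bigl(-\varepsilon^{1-2s}v_y(\cdot,\varepsilon)\bigr)\to\partial_\rho(\lambda f(u))=\lambda f'(u)u_\rho$ in $C_{\mathrm{loc}}(B_1)$ (interior estimates, Lemma~\ref{lema_reg}, together with the Neumann condition in \eqref{bdy}) while $w(\cdot,\varepsilon)\to w(\cdot,0)=u_\rho$ uniformly on compact subsets of $B_1$, the boundary term converges to $\lambda\int_{B_1}f'(u)u_\rho^2\eta(\cdot,0)^2\,dx$. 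Hence
\[
\int_{\C} y^{1-2s}|\nabla(w\eta)|^2+(n-1)\int_{\C} y^{1-2s}\frac{w^2}{\rho^2}\eta^2=\int_{\C} y^{1-2s}w^2|\nabla\eta|^2+\lambda\int_{B_1}f'(u)u_\rho^2\eta(\cdot,0)^2\,dx ,
\]
with every term finite; in particular $w\eta\in H^1_{0,L}(y^{1-2s})$ and $\int_\C y^{1-2s}\rho^{-2}w^2\eta^2<\infty$. The axis $\rho=0$ contributes nothing, because $|v_\rho|\le C\rho$ there by Lemma~\ref{basic lemma}(b).

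\textbf{Step 2: semi-stability and conclusion.} The trace of $w\eta$ on $B_1\times\{0\}$ is $\psi:=u_\rho\,\eta(\cdot,0)\in H^1_0(B_1)\subset H$. By the semi-stability of $u_\lambda$ (Proposition~\ref{exist}, Definition~\ref{stabFrac}), which written in the extension via Proposition~\ref{trace theorem} and extended by density to all of $H^1_{0,L}(y^{1-2s})$ (legitimate since $f'(u)$ is bounded) states that $\int_\C y^{1-2s}|\nabla\Phi|^2\,dxdy\ge\lambda\int_{B_1}f'(u)(\mathrm{tr}_{B_1}\Phi)^2\,dx$ for all $\Phi\in H^1_{0,L}(y^{1-2s})$, and taking $\Phi=w\eta$ (admissible by Step~1), we get
\[
\int_\C y^{1-2s}|\nabla(w\eta)|^2\,dxdy\ \ge\ \lambda\int_{B_1}f'(u)u_\rho^2\eta(\cdot,0)^2\,dx .
\]
Substituting the value of $\int_\C y^{1-2s}|\nabla(w\eta)|^2$ from the identity of Step~1 and cancelling the common term $\lambda\int_{B_1}f'(u)u_\rho^2\eta(\cdot,0)^2$ leaves $(n-1)\int_\C y^{1-2s}\frac{v_\rho^2}{\rho^2}\eta^2\le\int_\C y^{1-2s}v_\rho^2|\nabla\eta|^2$, which is \eqref{eq:3-lm}.

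I expect the only genuinely delicate point to be the limit $\varepsilon\to0$ in Step~1, specifically the identification of the boundary contribution on $B_1\times\{0\}$: for small $s$ the second $x$-derivatives of $v$ need not extend continuously up to $\{y=0\}$, so one cannot simply insert $w\eta^2$ into a weak formulation of the equation for $v_\rho$, and one must instead argue through the truncation in $y$, recovering the finiteness of all the limiting integrals only a posteriori from the identity itself. Everything else is the same bookkeeping as in the local case.
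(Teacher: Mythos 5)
Your proposal is correct and follows essentially the same route as the paper: differentiate the extension equation in $\rho$ to get ${\rm div}(y^{1-2s}\nabla v_\rho)=y^{1-2s}\frac{n-1}{\rho^2}v_\rho$ together with the differentiated Neumann condition $-y^{1-2s}\partial_y v_\rho\big|_{y=0}=\lambda f'(u)u_\rho$, test against $\eta^2 v_\rho$, and combine the resulting identity with the semi-stability inequality tested against $\eta v_\rho$, whereupon the $f'(u)$ terms cancel. The algebra is identical (your identity $\nabla w\cdot\nabla(w\eta^2)=|\nabla(w\eta)|^2-w^2|\nabla\eta|^2$ is exactly the regrouping the paper performs in \eqref{somelabel}). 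The only difference is presentational: the paper integrates by parts formally and invokes \eqref{diff_neumann} directly, whereas you justify the passage to the boundary by truncating at $y=\varepsilon$, recovering the trace term from interior regularity for the conjugate equation, and using Lemma~\ref{basic lemma}(b) to control the axis. This added care is a genuine (minor) improvement in rigor over what is written in the paper, since for $s\le 1/2$ the mixed second derivatives of $v$ need not extend continuously to $y=0$, so the integration by parts up to the boundary indeed requires the truncation argument you give.
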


\begin{proof}
Inequality
\eqref{stability} implies that for all $\xi\in H^1_{0,L}(y^{1-2s})$, there holds
\begin{equation} \label{stability2}
\int_{\C}y^{1-2s}\vert \nabla \xi\vert^2\;dxdy \ge \int_{B_{1}}f'(u)\xi^2\;dx,
\end{equation} 
where in the right-hand-side integral we identified $\xi$ and its the trace.

Let $\eta\in C^1(B_{1}\times [0,+\infty))$ as in the statement of the lemma and 
take $\xi=\eta v_{\rho}$.
By Lemma \ref{basic lemma},   $\xi \in H^1_{0,L}(y^{1-2s})$ and from  \eqref{stability2} we obtain
\begin{align} \label{somelabel} 
\lambda\int_{B_1}f'(u)v_{\rho}^2\eta^2dx 
& \le \int_{\C}y^{1-2s}|\nabla(v_{\rho}\eta)|^2dxdy
\nonumber\\
& \le \int_{\C}y^{1-2s}\{|\nabla v_{\rho}|^2\eta^2+v_{\rho}^2|\nabla\eta|^2+v_{\rho}\nabla
v_{\rho}\cdot\nabla\eta^2\}\;dxdy
 \nonumber\\
& \le \int_{\C}y^{1-2s}\{v_{\rho}^2|\nabla \eta|^2+\nabla(\eta^2 v_{\rho})\cdot\nabla v_{\rho}\}\;dxdy.
\end{align}

Since by Lemma~\ref{gnn}, $u$ is radially symmetric, by differentiation of \eqref{bdy} with respect to $\rho$, one gets
\begin{equation}
 \nabla \cdot (y^{1-2s}\nabla v_\rho)=y^{1-2s}\frac{n-1}{\rho^2}v_\rho\qquad\text{in $\C$.}
\label{eq:3-lm2}
\end{equation}
Next, we differentiate the Neumann boundary condition in \eqref{bdy}  with respect to $\rho$ to obtain
\begin{equation}
-y^{1-2s}\partial_y v_\rho=\lambda f'(v)v_\rho
\qquad \text{for $0 \le \rho <1$}.
\label{diff_neumann}
\end{equation}
Now, we multiply \eqref{eq:3-lm2} by $\eta^2 v_\rho$, and integrate by parts and use \eqref{diff_neumann} to find
$$
\int_{\C}y^{1-2s} \nabla(\eta^2 v_{\rho})\cdot\nabla v_{\rho} \;dxdy =
\lambda\int_{B_{1}}f'(u)(v_{\rho}\eta)^2\;dx -
(n-1)\int_{\C}y^{1-2s}\frac{(v_{\rho}\eta)^2}{\rho^2}\;dxdy.
$$
Combining the last equation with  \eqref{somelabel} yields  \eqref{eq:3-lm}.  
\end{proof}

\medskip

\proof[Proof of Proposition~\ref{key_lemma}]

Given $\ep>0$ let $\zeta_\ep \in C^\infty(\R)$ be such that $\zeta_\ep(t)=0$ for $t\le \ep$ and $t\ge 3/4$, $\zeta_\ep(t)=1$ for $t \in [2\ep,1/2]$, and $\zeta´(t) \le C/\ep$ for $t \in [\ep,2\ep]$.
Given $R>0$ we let $\phi_R$ denote a function  $C^\infty(\R)$ such that $\psi_R(y) =1$ for all $r\le R$ and $\psi_R(y)=0$ for all $y\ge R+1$.

Let $\alpha$ satisfy \eqref{alpha} and for $\ep>0$, $R>0$ define $\eta(\rho,y) = \rho^{1-\alpha} \zeta_\ep(\rho) \psi_R(y)$.
Given $\delta>0$ we estimate
$$
|\nabla \eta|^2 
\le 
((1-\alpha)^2 + \delta )\rho^{-2\alpha} \zeta_\ep(\rho)^2 \psi_R(y)^2 + C_\delta\rho^{2-2\alpha} | \nabla  (\zeta_\ep \psi_R) |^2
$$
for some $C_\delta>0$.
Then by \eqref{eq:3-lm}
$$
(n-1) \int_\C y^{1-2s}  v_\rho^2
\rho^{-2\alpha}(\zeta_\ep \psi_R)^2
d x d y
\le
((1-\alpha)^2 + \delta )
\int_\C y^{1-2s}  v_\rho^2
\rho^{-2\alpha}(\zeta_\ep \psi_R)^2
d x d y
$$
$$
+ C \int_\C
y^{1-2s}
\rho^{2-2\alpha} v_\rho^2
| \nabla  (\zeta_\ep \psi_R) |^2
d x d y.
$$
Choosing $\delta>0$ small enough 
$$
\int_\C y^{1-2s}  v_\rho^2
\rho^{-2\alpha}(\zeta_\ep \psi_R)^2
d x d y
\le 
C
\int_\C
y^{1-2s}
\rho^{2-2\alpha} v_\rho^2 (
| \nabla  \zeta_\ep|^2 \psi_R^2 + \zeta_\ep^2 |\nabla \psi_R |^2
)
d x d y
$$
where $C>0$.
Thanks to \eqref{bound rho 2} we have
$$
\int_\C
y^{1-2s}
\rho^{2-2\alpha} v_\rho^2 
| \nabla  \zeta_\ep|^2 \psi_R^2 
d x d y
\le \frac C{\ep^2}
\int_{
[\ep\le\rho\le 2 \ep, 0\le y \le R+1]}
y^{1-2s}
\rho^{4-2\alpha}
d x d y
$$
$$
\le C (R+1)^{2-2s}\ep^{2-2\alpha+n} .
$$
Because of  \eqref{alpha}we have that $2-2\alpha+n>0$. Letting $\ep\to 0$ we  find
$$
\int_{[\rho\le1/2,y\le R]}
y^{1-2s}  
v_\rho^2
\rho^{-2\alpha} 
d x d y
\le 
C
\int_{[1/2\le \rho \le 3/4] \cup [R\le y\le R+1]}
y^{1-2s}
\rho^{2-2\alpha} v_\rho^2 
d x d y \le C
$$
where the last inequality follows from \eqref{ineq v 2}.
Finally, letting $R\to \infty$ we conclude \eqref{key}.
\qed
\medskip

For  $0<\beta<n$ we define
\begin{align}
\label{def a}
A_{n,s,\beta}  = 
\int_{\R^n  \times (0,+\infty) }   
\frac{y^{3 - 2 s}}
{(|x|^2 + y^2)^{\frac{\beta+2}{2}}
( y^2 + | x - e |^2)^{\frac{n+2-2 s}{2}}}
\, d x d y
\end{align}
where $r=|(x,y)|=(\rho^2+y^2)^{1/2}$, and
$e$ is any unit vector in $\R^n$.

\begin{lemma}
\label{lemma a}
We have   $1 - \beta C_{n,s} A(n,s,\beta) > 0$, 
where $C_{n.s}$ is the constant in the representation formula \eqref{def bar v}.
\end{lemma}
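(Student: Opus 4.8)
The plan is to compute the two constants $C_{n,s}$ and $A_{n,s,\beta}$ in closed form; their product turns out to be an elementary rational function of $n$, $s$, $\beta$, so the inequality becomes transparent. Concretely, I expect to establish the identity
\[
1-\beta\,C_{n,s}\,A_{n,s,\beta}=\frac{n-\beta}{\,n+2-2s-\beta\,},
\]
whose right-hand side is strictly positive whenever $0<\beta<n$ and $s\in(0,1)$: the numerator is positive, and the denominator equals $(n-\beta)+2(1-s)>0$.

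\smallskip
\emph{Step 1: the constant $C_{n,s}$.} The function $\bar v$ in \eqref{def bar v} satisfies ${\rm div}\,(y^{1-2s}\nabla \bar v)=0$ in $\R^{n+1}_+$ and has Neumann datum exactly $\bar h$ on $\{y=0\}$. By the fundamental theorem of calculus,
\[
-y^{1-2s}\partial_y\bar v(x,y)=C_{n,s}\,y^{2-2s}\int_{\R^n}\frac{\bar h(\tilde x)}{(y^2+|x-\tilde x|^2)^{\frac{n+2-2s}{2}}}\,d\tilde x ,
\]
and after the substitution $\tilde x=x+yw$ and $y\to0^+$ the right-hand side converges to $C_{n,s}\,\bar h(x)\int_{\R^n}(1+|w|^2)^{-\frac{n+2-2s}{2}}\,dw$. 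Matching this with $\bar h(x)$ forces
\[
C_{n,s}^{-1}=\int_{\R^n}\frac{dw}{(1+|w|^2)^{\frac{n+2-2s}{2}}}=\pi^{n/2}\,\frac{\Gamma(1-s)}{\Gamma\!\big(\tfrac{n+2-2s}{2}\big)},
\]
the last equality being the standard Beta integral, which converges precisely because $\tfrac{n+2-2s}{2}>\tfrac n2$, i.e.\ $s<1$.

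\smallskip
\emph{Step 2: the integral $A_{n,s,\beta}$.} Since the integrand in \eqref{def a} is nonnegative, Fubini allows any order of integration. I would integrate in $y$ first, with $u=y^2$ (so $y^{3-2s}\,dy=\tfrac12 u^{1-s}\,du$), reducing the radial weight to the one-parameter family $(|x|^2+u)^{-\frac{\beta+2}{2}}(|x-e|^2+u)^{-\frac{n+2-2s}{2}}$. Feynman-parametrizing the two denominators (using $|e|=1$ to complete the square, which replaces the two centres $0$ and $e$ by the single centre $(1-t)e$ and the shift $t(1-t)+u$) turns the $x$--integral into the elementary Gaussian-type integral $\int_{\R^n}(|x-p|^2+m^2)^{-\mu}\,dx=\pi^{n/2}\tfrac{\Gamma(\mu-n/2)}{\Gamma(\mu)}\,m^{n-2\mu}$; the remaining $u$--integral is again a Beta integral, producing a factor $(t(1-t))^{-\beta/2}$; and the surviving $t$--integral collapses to $\int_0^1(1-t)^{\frac{n-2s-\beta}{2}}\,dt=\tfrac{2}{n+2-2s-\beta}$, which converges because $\beta<n<n+2-2s$. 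After collecting the Gamma prefactors (and using $\Gamma(\tfrac{\beta+2}{2})=\tfrac\beta2\,\Gamma(\tfrac\beta2)$), I expect
\[
A_{n,s,\beta}=\frac{2\,\pi^{n/2}\,\Gamma(2-s)}{\beta\;\Gamma\!\big(\tfrac{n+2-2s}{2}\big)\;(n+2-2s-\beta)} .
\]

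\smallskip
\emph{Step 3: conclusion.} Multiplying the formulas of Steps 1 and 2 and using $\Gamma(2-s)=(1-s)\Gamma(1-s)$ gives
\[
\beta\,C_{n,s}\,A_{n,s,\beta}=\frac{2\,\Gamma(2-s)}{\Gamma(1-s)\,(n+2-2s-\beta)}=\frac{2(1-s)}{\,n+2-2s-\beta\,},
\]
hence the identity displayed at the outset, and the strict positivity for $0<\beta<n$. The only genuinely delicate point is Step 2: one has to choose a convenient order of integration — the Feynman trick that decouples the two centres is what makes the spatial integral elementary — and then track the Gamma prefactors and the convergence condition carefully through the four nested integrations; everything else is routine.
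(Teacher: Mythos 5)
Your proposal is correct, and it takes a genuinely different route from the paper. The paper never evaluates $C_{n,s}$ or $A_{n,s,\beta}$; instead it derives, for any radial $h\in L^\infty(\R^n)$ with compact support and any $0<\beta<n$, the identity
\[
0=(1-\beta C_{n,s}A_{n,s,\beta})\int_{\R^n}h\,\rho^{-\beta}\,dx
+\beta\int_{\R^{n+1}_+}y^{1-2s}r^{-\beta-2}\rho\,u_\rho\,dx\,dy,
\]
obtained by pairing the extension equation with $(\rho^2+y^2+\epsilon)^{-\beta/2}$ and letting $\epsilon\to0$ (the same manipulation reused in Step~1 of the proof of Theorem~\ref{thm_extremal}). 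Choosing a radially decreasing $h\ge0$, $h\not\equiv0$, both remaining integrals have definite signs, which forces $1-\beta C_{n,s}A_{n,s,\beta}>0$ with no computation at all. Your argument instead evaluates both constants in closed form: I checked your Gamma-function bookkeeping and the Feynman-parameter step, and the chain
\[
C_{n,s}^{-1}=\pi^{n/2}\frac{\Gamma(1-s)}{\Gamma\bigl(\tfrac{n+2-2s}{2}\bigr)},\qquad
A_{n,s,\beta}=\frac{2\pi^{n/2}\Gamma(2-s)}{\beta\,\Gamma\bigl(\tfrac{n+2-2s}{2}\bigr)(n+2-2s-\beta)},
\]
does collapse to $\beta C_{n,s}A_{n,s,\beta}=\tfrac{2(1-s)}{n+2-2s-\beta}$ and hence $1-\beta C_{n,s}A_{n,s,\beta}=\tfrac{n-\beta}{n+2-2s-\beta}>0$ for $0<\beta<n$, $s\in(0,1)$. (The cancellation $\tau-1-\beta/2=0$ with $\tau=\tfrac{\beta+2}{2}$ is what makes the $t$-integral elementary, as you say.) Your route buys an explicit formula at the cost of tracking several Beta/Gamma identities and convergence ranges; the paper's route is shorter and purely qualitative, trading the exact value for a sign argument based on radial monotonicity. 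Both are complete proofs of the inequality as stated.
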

\proof
Let $h \in L^\infty(\Rn)$ be radial and have compact support, and 
 $u(x,y) = u(\rho,y)$ be a solution of
\begin{equation}
\label{eq v Rn}
\left \{
\begin{aligned}
{\rm div}\, (y^{1-2s} \nabla u)&=0 &&
{\mbox{ in $\R^n \times (0,+\infty)$}}
\\
u(x,y)& \to 0
&& \mbox{ as $|(x,y)|\to \infty$}
\\
-y^{1-2s}u_y& = h(x)
&& \mbox{ on $\R^n \times \{ 0 \} $.}
\end{aligned}\right . 
\end{equation}
Now, we claim that, for any $0<\beta<n$
\begin{align}
\label{result}
0 = 
(1 - \beta C_{n,s} A_{n,s,\beta} )
 \int_{\R^n  } h(x) \rho^{-\beta} \, d x
+
\beta \int_{\R^n  \times (0,+\infty) } 
y^{1-2s}  r^{-\beta-2}  \rho u_\rho \, d x d y.
\end{align}
Assuming the claim for a moment we prove the lemma. Choose a smooth radially decreasing function $h \ge0$, $h \not\equiv 0$ with compact support. Let $u$ be the solution of \eqref{eq v Rn}. By \eqref{def bar v}, $u$ can be explicitly given by a convolution kernel. In turn, this shows that $u$ is radial with respect to $x$ and non-increasing in $|x|$. Hence
$$
\int_{\R^n  \times (0,+\infty) } y^{1-2s}  r^{-\beta-2}  \rho u_\rho \, d x d y < 0
$$
and
$$
\int_{\R^n  } h(x) \rho^{-\beta} \, d x > 0.
$$
This shows that $1 - \beta C_{n,s} A_{n,s,\beta} >0$.

Now we give the argument for  \eqref{result}.
Let $\ep>0$, $\beta  \in (0,  n+2-2s)$ and multiply equation \eqref{eq v Rn}
by $(\rho^2  +y^2 + \ep)^{-\beta/2}$ to get
$$
0 = \int_{\R^n  \times (0,+\infty) }
{\rm div}\, (y^{1-2s} \nabla u) (\rho^2  +y^2 + \ep)^{-\beta/2} \, d x d y
$$
$$
= - \int_{\R^n  } y^{1-2s} u_y (\rho^2  + \ep)^{-\beta/2} \, d x
+ \beta \int_{\R^n  \times (0,+\infty) } y^{1-2s} (\rho^2   +y^2 + \ep)^{-\beta/2-1} (  x \cdot \nabla_x u + y\  u_y )  \, d x d y
$$
$$
= \int_{\R^n  } h(x)  (\rho^2  + \ep)^{-\beta/2}  \, d x
+
\beta \int_{\R^n  \times (0,+\infty) } y^{1-2s}  (\rho^2   +y^2 + \ep)^{-\beta/2-1}  \rho u_\rho \, d x d y
$$
$$
+
\beta \int_{\R^n  \times (0,+\infty) } y^{2-2s}  (\rho^2   +y^2 + \ep)^{-\beta/2-1}   u_y \, d x d y
$$
Using the representation formula
$$
- y^{1-2 s} u_y(x,y) =  C_{n,s} y^{2-2s} \int_{\R^n} \frac{h(\tilde x)}{(y^2 + |x-\tilde x|^2)^{\frac{n+2-2 s}{2}}} \, d \tilde x
$$
we find
$$
0 = \int_{\R^n  } h(x) (\rho^2+\ep)^{-\beta/2} \, d x
+
\beta \int_{\R^n  \times (0,+\infty) } y^{1-2s}   (\rho^2   +y^2 + \ep)^{-\beta/2-1}  \rho u_\rho \, d x d y
$$
$$
-
\beta C_{n,s} \int_{\R^n  \times (0,+\infty) } \int_{\R^n}  y^{3-2s}   (\rho^2   +y^2 + \ep)^{-\beta/2-1} 
\frac{h(\tilde x)}{(y^2 + |x-\tilde x|^2)^{\frac{n+2-2 s}{2}}} \, d \tilde x
\, d x d y.
$$
By Fubini,  the last integral becomes
$$
\int_{\R^n} h(\tilde x) \int_{\R^n  \times (0,+\infty) }  
\frac{ y^{3 - 2 s}}{ (|x|^2   +y^2 + \ep)^{\beta/2+1}  (y^2 + |x-\tilde x|^2)^{\frac{n+2-2 s}{2}}}
\, d x d y \, d \tilde x,
$$
and by the change variables:  $ y =  |\tilde x|  y'$, $y>0$, $x =  |\tilde x|  x'$, $x' \in \R^n$,
we find
$$
 \int_{\R^n  \times (0,+\infty) }  
\frac{ y^{3 - 2 s}}{ (|x|^2   +y^2 + \ep)^{\beta/2+1}  (y^2 + |x-\tilde x|^2)^{\frac{n+2-2 s}{2}}}
\, d x d y
= |\tilde x|^{  - \beta } 
A_{n,s,\beta}
(\frac\ep{|\tilde x|^2})
$$
where
$$
A_{n,s,\beta}
(t)
= 
\int_{\R^n  \times (0,+\infty) }   
\frac{y^{3 - 2 s}}
{(|x|^2 + y^2 + t)^{\frac{\beta+2}{2}}
( y^2 + | x - \frac{\tilde x}{|\tilde x|} |^2)^{\frac{n+2-2 s}{2}}}
\, d x d y.
$$
Therefore, from the above computations we get
\begin{align}
\label{int 1}
\nonumber
0 =  & \int_{\R^n  } h(x) (\rho^2+\ep)^{-\beta/2} (1 -
\beta C_{n,s} A_{n,s,\beta}(\ep/|\tilde x|^2 ) ) \, d x
\\
& +
\beta \int_{\R^n  \times (0,+\infty) } y^{1-2s}  (\rho^2+y^2+\ep)^{-\beta/2-1}  \rho u_\rho \, d x d y
\end{align}
Notice that 
$$
\lim_{\ep \to 0} A_{n,s,\beta}(\ep/|\tilde x|^2 )
= A_{n,s,\beta} 
\quad  \text{for all } \tilde x \in \R^n
$$
and that this limit  is finite for $0<\beta<n+2-2s$.
Moreover $A_{n,s,\beta}$ is independent of $\tilde x$.
Since $\beta<n$ and $h$ is bounded with compact support the function $h(\rho) \rho^{-\beta}$ is integrable. 
Hence, by  letting $\ep\to0$ in \eqref{int 1} we obtain \eqref{result}.
\qed

\section{Proof of Theorem~\ref{thm_extremal}}

\begin{lemma}
Let $h \in L^\infty( B_1)$  and $u \in H$ be the unique solution of
$$
(-\Delta )^{s}u = h \quad \text{in } B_1.
$$
Then
\begin{equation}\label{bnd_urho}
|u(x)|
\ \leq\ C_{n,s}
\int_{B_1}\frac{|h(\tilde x)|}{|x-\tilde{x}|^{n-2 s}} \ d\tilde{x} 
\qquad\text{for every } \ x\in B_1 .
\end{equation}
\end{lemma}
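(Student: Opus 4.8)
The plan is to prove \eqref{bnd_urho} by a comparison argument at the level of the Caffarelli--Silvestre extension: compare the canonical extension of $u$ with the explicit half-space extension whose Neumann datum is $|h|$ extended by zero. Write $\Phi(x)=\int_{B_1}|h(\tilde x)|\,|x-\tilde x|^{2s-n}\,d\tilde x$; since $n>2s$ (because $s<1\le n/2$) the Riesz kernel is locally integrable, so $\Phi\in L^\infty(B_1)$. Let $v\in H^1_{0,L}(y^{1-2s})$ be the canonical extension of $u$, which solves $\mathrm{div}(y^{1-2s}\nabla v)=0$ in $\C$, $v=0$ on $\partial_L\C$, and $-y^{1-2s}v_y=c_{n,s}h$ on $B_1\times\{0\}$ in the weak sense, $c_{n,s}$ being the constant of \eqref{formula g p}. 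Let $\bar h=|h|$ on $B_1$ and $\bar h=0$ on $\R^n\setminus B_1$, and let $\bar v$ be the function given by the representation formula \eqref{def bar v} with datum $\bar h$; then $\bar v$ solves $\mathrm{div}(y^{1-2s}\nabla\bar v)=0$ in $\R^{n+1}_+$, has finite weighted energy on $B_1\times(0,R)$ for each $R$ (being the half-space extension of $\bar v(\cdot,0)$, which lies in $\dot H^s(\R^n)$ because $\bar h\in L^2(\R^n)$ has compact support), tends to $0$ at infinity, and satisfies $-y^{1-2s}\bar v_y=\bar h$ on $\R^n\times\{0\}$. Since $\bar h\ge0$, the representation formula shows at once that $\bar v\ge0$ and that $\bar v$ is bounded, and that $\bar v(x,0)=\frac{C_{n,s}}{n-2s}\Phi(x)$ for $x\in B_1$.

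Next I would set $w:=c_{n,s}\bar v-v$ on $\C$ and check the hypotheses of the maximum principle Lemma~\ref{maxp2} on $\Omega=B_1$. One has $\mathrm{div}(y^{1-2s}\nabla w)=0$ in $\C$, with finite weighted energy on $B_1\times(0,R)$; on $\partial_L\C$ we get $w=c_{n,s}\bar v\ge0$ because $v$ vanishes there; and on $B_1\times\{0\}$ the Neumann datum of $w$ is $c_{n,s}(\bar h-h)=c_{n,s}(|h|-h)\ge0$ in the required weak sense, since the weak Neumann inequalities for $v$ and $\bar v$ subtract. Finally $w$ satisfies the growth bound \eqref{growth}: $\bar v$ is bounded by the previous step, and $v$ is bounded because $u\in L^\infty(B_1)$ by Lemma~\ref{boundary regularity} while $v(\cdot,y)\to0$ as $y\to\infty$ (from $v=\sum_k b_k\varphi_k g_k$ with the $g_k$ decaying exponentially), so $|v|\le\|u\|_{L^\infty(B_1)}$ on $\C$ by comparison on truncated cylinders. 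Lemma~\ref{maxp2} then gives $w\ge0$ in $\C$; evaluating at $y=0$ yields $u(x)=v(x,0)\le c_{n,s}\bar v(x,0)=\frac{c_{n,s}C_{n,s}}{n-2s}\Phi(x)$ for every $x\in B_1$.

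To finish, observe that $-u$ solves $(-\Delta)^s(-u)=-h$ and that $|-h|=|h|$, so the same comparison with the same $\bar v$ gives $-u(x)\le\frac{c_{n,s}C_{n,s}}{n-2s}\Phi(x)$; hence $|u(x)|\le\frac{c_{n,s}C_{n,s}}{n-2s}\Phi(x)$, which is \eqref{bnd_urho} with $C_{n,s}$ a suitable product of the structural constants. The argument is conceptually light; the two points that need some care are (i) writing the weak formulations of the two Neumann conditions with enough test functions that their difference is exactly the weak inequality demanded by Lemma~\ref{maxp2}, and (ii) the boundedness (in fact decay as $y\to\infty$) of the canonical extension $v$, needed for the growth hypothesis \eqref{growth}. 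Both follow from the spectral description of $v$ and the estimates already established, and tracking the constants $c_{n,s}$, $C_{n,s}$ is routine.
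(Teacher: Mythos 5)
Your proof is correct in substance but takes a genuinely different route from the paper. The paper's argument first reduces to $h\ge 0$, then compares the conjugate functions $w=-y^{1-2s}v_y$ and $\tilde w=-y^{1-2s}\tilde v_y$, which solve the conjugate equation $\nabla\cdot(y^{2s-1}\nabla w)=0$ (weight $y^{2s-1}$ rather than $y^{1-2s}$), applying the maximum principle at that level to conclude $-v_y\le -\tilde v_y$ in $\C$; only afterwards does it recover the pointwise estimate by integrating the inequality in $y$ via $v(x,0)=-\int_0^\infty v_y(x,y)\,dy$. You instead set up the comparison directly at the level of the functions, $w=c_{n,s}\bar v-v$, for the original weight $y^{1-2s}$, and read off the estimate by evaluation at $y=0$. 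Both arguments hinge on the same half-space barrier coming from the explicit Poisson kernel, and on Lemma~\ref{maxp2}, but at different levels of the ``conjugation hierarchy.'' Your route is shorter: it avoids both the passage to the conjugate equation and the final integration in $y$. The price is that you must verify the polynomial growth hypothesis \eqref{growth} for $v$ itself rather than for the exponentially small quantity $y^{1-2s}v_y$; you do this via $u\in L^\infty(B_1)$ (Lemma~\ref{boundary regularity}) together with the spectral decay of $v$ in $y$, which is fine. Two minor remarks: (i) your reduction to signed data (comparing with $\bar h=|h|$ and repeating with $-u$) is equivalent to the paper's decomposition $h=h^+-h^-$; (ii) your point that the two weak Neumann formulations must be subtracted against a common class of test functions to produce the inequality required by Lemma~\ref{maxp2} is exactly the place where care is needed, and you flag it appropriately.
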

\proof
Writing $h = h^+ - h^- $ with $h^+,h^- \ge 0$ we see that it is sufficient to prove the result in the case $h\ge 0$, so that also $u \ge 0$.

Let $v$ be the canonical extension of $u$.
Since  $v(x,\infty)=0$, for every $x$, we can write
\begin{align}\label{v_0}
v(x,0)=-\int_0^\infty v_y(x,y)\ d y
\quad \text{for all }x\in B_1.
\end{align}
Let $g(x)$ be equal to $ h(x)$ extended by $0$
in $\R^n\setminus B_1$, and denote by $\tv$ the solution of
\begin{equation}
\label{linear_full_2}
\left \{
\begin{aligned}
{\rm div}\, (y^{1-2s} \nabla \tv )&=0 &&
\mbox{ in $\R^n \times (0,+\infty)$}
\\
\tv(z)& \to 0
&& \mbox{ as $|z|\to \infty$}
\\
-y^{1-2s}\tv_y& = g(x)
&& \mbox{ on $\R^n \times \{ 0 \} $.}
\end{aligned}\right . 
\end{equation}
By the Green's representation formula for \eqref{linear_full_2}, we have
\begin{equation}\label{grf}
-\tv_y(x,y)= C_{n,s}  \  y\int_{\R^n }\ \frac{g(\tilde{x})}
{(|x-\tilde{x}|^2+y^2)^{\frac{n+2-2 s}{2}}}\ d\tilde{x}.
\end{equation}
Consider the functions $w = - y^{1-2s} v_y$ and $\tilde w =  - y^{1-2s} \tilde v_y$. 
Then, $w$ and $\tilde w$ satisfy
$$
\nabla (y^{2s-1} \nabla w) = 0 \quad \text{in } \C.
$$
Since $- \tv_y \ge 0  $ in $\R^n \times [0,+\infty)$ in particular we have
$$
\tilde w \ge 0= w\quad\text{on }\partial_L\C.
$$
Furthermore
$$
w \le \tilde w \quad \text{in }  B_1 \times \{ 0 \}
$$
and for $z\in \C$, $w(z),\tilde w(z) \to 0$ as $|z|\to+\infty$. 
Then, the maximum principle (Lemma~\ref{maxp2}) implies that
\begin{equation}\label{ppimax}
-v_y\le -\tv_y\quad\text{in }\C.
\end{equation}
Combining \eqref{v_0}, \eqref{ppimax} together with \eqref{grf} we find
\begin{align*}
v(x,0)&\le C_{n,s} \ \int_0^\infty y\int_{\R^n}\ \frac{g(\tilde{x})}{(|x-\tilde{x}|^2+y^2)^{\frac{n+2-2 s}{2}}}\ d\tilde{x} dy
\\
&= C_{n,s}
 \int_{\R^n} g(\tilde{x})\left(\int_0^\infty  \frac{y}{(|x-\tilde{x}|^2+y^2)^{\frac{n+2-2 s}{2}}}\ dy \right)d\tilde{x}
\end{align*}
for all $x \in B_1$, 
where we have used Fubini's theorem in the last  line.
Claim  \eqref{bnd_urho} follows by performing the integration over the $y$ variable in the last expression,
and recalling the definition of $g(x)$.
\qed

\proof[Proof of Theorem~\ref{thm_extremal}]
We denote points in $\C= B_1 \times (0,+\infty)$ as $(x,y) \in \C$, 
where $x\in B_1$, $y\in (0,+\infty)$, and $\rho = |x|$.

\medskip
\noindent
{\bf Step 1.}
Take $\alpha$ such that  \eqref{alpha} holds.
We claim that for $\beta>0$ such that $2 ( \beta+s- \alpha)<n$ 
we have
\begin{align}
\label{weighted estimate f}
\int_{B_1} f(u_\lambda) \rho^{-\beta} \, d x \le C
\end{align}
with $C$ independent of $\lambda$ as $\lambda\to\lambda^*$.

To prove the claim, let $\ep>0$,  $R>0$ and multiply \eqref{extension} by 
$(\rho^2+y^2+\ep)^{-\beta/2}$ and integrate over $[\rho \le 1/2 , 0\le y \le R]$
to get
$$
0 = \int_{[\rho \le 1/2 , 0\le y \le R]}
\nabla \cdot (y^{1-2s} \nabla v ) (\rho^2+y^2+\ep)^{-\beta/2} \, d x d y.
$$
Integrating by parts we find
\begin{align}
\label{eq weighted f}
\lambda \int_{B_{1/2}} f(u_\lambda) (\rho^2+\ep)^{-\beta/2} \, d x 
= - I_1 -  I_2 + I_3
\end{align}
where
\begin{align*}
I_1 & = \int_{[ \rho \le 1/2 ]}
R^{1-2s} v_y(\rho,R) (\rho^2 + R^2 + \ep)^{-\beta/2} \, d x 
\\
I_2 & = \int_{ [0\le y \le R]}
y^{1-2s} v_\rho(1/2,y)  (  1/4 + y^2 + \ep)^{-\beta/2}\, d y
\\
I_3 & = -\beta
\int_{[ \rho \le 1/2 , 0\le y \le R]}
y^{1-2s}   (\rho^2+y^2+\ep)^{-\beta/2-1} ( v_\rho \rho + v_y y ) \, d x d y.
\end{align*}
By \eqref{ineq v 1} and \eqref{ineq v 2}, $I_1 $ and $I_2$ remain uniformly bounded as $\ep\to0$ and  $\lambda \to \lambda^*$. We decompose further
$$
I_3 = I_\rho + I_y
$$
where
\begin{align*}
I_\rho & = 
-\beta
\int_{[ \rho \le 1/2 , 0\le y \le R]}
y^{1-2s}   (\rho^2+y^2+\ep)^{-\beta/2-1}  v_\rho \rho  \, d x d y,
\\
I_y & = -\beta
\int_{[ \rho \le 1/2 , 0\le y \le R]}
y^{1-2s}   (\rho^2+y^2+\ep)^{-\beta/2-1}  v_y y  \, d x d y.
\end{align*}
Now we  estimate $I_y$. 
Let $g(x)$ be equal to $ \lambda f(u_\lambda(x))$ extended by $0$
in $\R^n\setminus B_1$, and denote by $\tv$ the solution of
\begin{equation}
\label{linear_full_2 b}
\left \{
\begin{aligned}
{\rm div}\, (y^{1-2s} \nabla \tv )&=0 &&
\mbox{ in $\R^n \times (0,+\infty)$}
\\
\tv(z)& \to 0
&& \mbox{ as $|z|\to \infty$}
\\
-y^{1-2s}\tv_y& = g(x)
&& \mbox{ on $\R^n \times \{ 0 \} $.}
\end{aligned}\right . 
\end{equation}
By the Green representation formula for \eqref{linear_full_2 b}, we have
\begin{equation}\label{grf b}
-\tv_y(x,y)= C_{n,s}  \  y
\int_{\R^n}\ \frac{g(\tilde{x})}
{(|x-\tilde{x}|^2+y^2)^{\frac{n+2-2 s}{2}}}\ d\tilde{x}.
\end{equation}
Consider the functions $w = - y^{1-2s} v_y$ and $\tilde w =  - y^{1-2s} \tilde v_y$. 
Then, $w$ and $\tilde w$ satisfy
$$
\nabla\cdot (y^{2s-1} \nabla w) = 0 \quad \text{in } \C.
$$
Since $- \tv_y \ge 0  $ in $\R^n \times [0,+\infty)$ we have in particular
$$
\tilde w \ge 0= w\quad\text{on }\partial_L\C.
$$
Furthermore
$$
w \le \tilde w \quad \text{in }  B_1 \times \{ 0 \}
$$
and for  $z\in \C$, $w(z),\tilde w(z) \to 0$ as $|z|\to+\infty$. 
Then, the maximum principle (Lemma~\ref{maxp2}) implies that
$$
-v_y\le -\tv_y\quad\text{in }\C.
$$
It follows that
$$
I_y 
\le - \beta
\int_{[ \rho \le 1/2 , 0\le y \le R]}
y^{1-2s}   (\rho^2+y^2+\ep)^{-\beta/2-1}  \tilde v_y y  \, d x d y
$$
and by \eqref{grf b}
$$
I_y \le \beta C_{n,s} 
\int_{[ \rho \le 1/2 , 0\le y \le R]}
\int_{\R^n}
\frac{y^{3-2s}   g(\tilde{x})}
{ (\rho^2+y^2+\ep)^{\beta/2+1}  
(|x-\tilde{x}|^2+y^2)^{\frac{n+2-2 s}{2}}}\ d\tilde{x}
\, d x d y
$$
$$
\le \beta C_{n,s} 
\int_{\R^n}
g(\tilde{x})
\left( 
\int_{[ \rho \le 1/2 , 0\le y \le R]}
\frac{y^{3-2s} }
{ (\rho^2+y^2+\ep)^{\beta/2+1}  
(|x-\tilde{x}|^2+y^2)^{\frac{n+2-2 s}{2}}}
\, d x d y
\right)
d \tilde x .
$$
Therefore
\begin{align}
\label{ineq I y}
I_y \le \beta C_{n,s}  A_{n,s,\beta}
\int_{\R^n}
g(\tilde{x}) \, d \tilde x 
=  \beta C_{n,s}  A_{n,s,\beta}
\lambda
\int_{B_1}
f(u_\lambda) \, d x
\end{align}
where $A_{n,s,\beta}$ is defined as in \eqref{def a}.
Combining \eqref{eq weighted f} with \eqref{ineq I y} we obtain
\begin{align}
\label{ineq weighted f}
( 1 - \beta C_{n,s} A_{n,s,\beta})  \lambda 
\int_{B_1}
f(u_\lambda) \, d x \le \lambda \int_{B_1 \setminus B_{1/2}} f(u_\lambda) \, d x -I_1 - I_2 + I_\rho.
\end{align}
Recall that by \eqref{ineq v 1} and \eqref{ineq v 2}, 
\begin{align}
\label{bd I1 I2}
|I_1|\le C , \qquad |I_2|\le C 
\end{align}
for some $C$ independent $\ep\to0$ and  $\lambda \to \lambda^*$.

By the Cauchy-Schwarz inequality
\begin{align*}
|I_\rho | \le & 
\beta
\left(
\int_{[ \rho \le 1/2 , 0\le y \le R]}
y^{1-2s}    v_\rho^2 \rho^{-2\alpha}   \, d x d y
\right)^{1/2}
\left(
\int_{[ \rho \le 1/2 , 0\le y \le R]}
\frac{ y^{1-2s}\rho^{2+2\alpha} }
{(\rho^2+y^2+\ep)^{\beta+2}}  \, d x d y
\right)^{1/2}
\end{align*}
The last integral can be estimated by
$$
\int_{[ \rho \le 1/2 , 0\le y \le R]}
\frac{ y^{1-2s}\rho^{2-2\alpha} }
{(\rho^2+y^2+\ep)^{\beta+2}}  \, d x d y
\le 
\int_0^\infty
\int_{[\rho\le1/2]}
\frac{ y^{1-2s}\rho^{2\alpha} }
{(\rho^2+y^2)^{\beta+1}}  \, d x d y
$$
We change variables $y = \rho t$ for $\rho>0$. Since $\beta >0$,  we have
$$
\int_{[ \rho \le 1/2 , 0\le y \le R]}
\frac{ y^{1-2s}\rho^{2+2\alpha} }
{(\rho^2+y^2+\ep)^{\beta+2}}  \, d x d y
\le 
\int_{[\rho\le1/2]}
\rho^{2\alpha-2\beta - 2s}
\, d x 
\int_0^\infty
\frac{ t^{1-2s}}
{(1+t^2)^{\beta+1}}
 \, d t 
$$
and this integral is finite if $2 ( \beta+s- \alpha)<n$.
The integral  $\int_{[ \rho \le 1/2 , 0\le y \le R]}
y^{1-2s}    v_\rho^2 \rho^{-2\alpha}   \, d x d y
$ remains bounded as $\ep\to0$ and $\lambda\to\lambda^*$ by \eqref{key}, provided $\alpha$ satisfies \eqref{alpha}.

Thus, if $\alpha$ satisfies \eqref{alpha} and  $2 ( \beta+s- \alpha)<n$ we deduce that 
\begin{align}
\label{bd I rho}
|I_\rho|\le C
\end{align}
with $C$ independent of $\ep>0$ and $\lambda \in [0,\lambda^*)$.

By Lemma~\ref{lemma a} we have $ 1 - \beta C_{n,s} A_{n,s,\beta} > 0$. Therefore, from \eqref{ineq weighted f}, \eqref{bd I1 I2} and \eqref{bd I rho}, and using a uniform bound for $u_\lambda$ in $B_1 \setminus B_{1/2}$ we deduce
\eqref{weighted estimate f}.

\medskip
\noindent
{\bf Step 2.} Conclusion.

(a) Assume first that  $n < 2 ( s + 2 + \sqrt{2(s+1)}) $.
Then, $n/2-s< 1+\sqrt{n-1}$ and we can choose $\alpha$ satisfying $n/2-s<\alpha < 1+\sqrt{n-1}$.
Thus, $n-2s < n/2 + \alpha-s$ and we may choose
$\beta=n-2s$ 
in \eqref{weighted estimate f}, which implies that $\int_{B_1} f(u_\lambda) \rho^{-n+2s} \, d x \le C$ with a constant independent of $\lambda$.
By \eqref{bnd_urho} we have
$$
u_\lambda(0)\le C_n\int_{B_1}\rho^{-n+2s}f(u_\lambda(\rho))dx \le C .
$$
Since $u_\lambda$ is radially decreasing, we conclude that $u_\lambda$ is uniformly bounded in $B_1$ as $\lambda\to\lambda^*$.

\medskip
(b) Now assume that $n \ge  2 ( s + 2 + \sqrt{2(s+1)}) $.
Suppose $1<\alpha<1+\sqrt{n-1}$, $\beta>0$, and $2 ( \beta+s- \alpha)<n$. 
Then, using that $f'>0$, that $u_\lambda$ is radially decreasing, as well as the estimate \eqref{weighted estimate f}, we have for $\rho\le 1/2$
$$
c \rho^{n-\beta} f(u_\lambda(\rho)) = f(u_\lambda(\rho)) \int_{B_{2 \rho}\setminus B_\rho} |x|^{-\beta}\, d x \le \int_{B_1} f(u_\lambda) |x|^{-\beta} \, d x \le C
$$
where $c>0$. This yields
$$
f(u_\lambda(\rho)) \le C \rho^{\beta-n} \quad \text{for } 0<\rho \le 1
$$
where $C$ is independent of $\lambda$. Using \eqref{bnd_urho}, this implies that if additionally $\beta < n -2s$, then
$$
u_\lambda(x) \le \frac{C}{|x|^{n-\beta-2s}} 
\qquad \text{for all } x\in B_1.
$$
Since we have the restrictions $\beta<n/2+\alpha-s$ and $\alpha<1+\sqrt{n-1}$, we see that for any $\mu < n/2-s-1-\sqrt{n-1}$, there is $C$ independent of $\lambda$ such that 
$$
u_\lambda(x) \le \frac{C}{|x|^\mu} 
\qquad \text{for all } x\in B_1.
$$
By letting $\lambda\to\lambda^*$ in the last expression we conclude the proof.
\qed

\section{Acknowledgments}
A.C. was partially supported by MTM2008-06349-C03-01 and PAPIIT IN101209.
J. D. was partially supported by   Fondecyt 
1090167 and FONDAP grant  for Applied Mathematics, Chile.
This work is
also part of the MathAmSud NAPDE project (08MATH01)
and ECOS contract no. C09E06 (J.D. \& L.D). Y.S. is supported by the A.N.R. project "PREFERED".

\begin{bibdiv}
\begin{biblist}

\bib{abramowitz-stegun}{book}{
   author={Abramowitz, Milton},
   author={Stegun, Irene A.},
   title={Handbook of mathematical functions with formulas, graphs, and
   mathematical tables},
   series={National Bureau of Standards Applied Mathematics Series},
   volume={55},
   publisher={For sale by the Superintendent of Documents, U.S. Government
   Printing Office, Washington, D.C.},
   date={1964},
}

\bib{adams}{book}{
   author={Adams, Robert A.},
   title={Sobolev spaces},
   note={Pure and Applied Mathematics, Vol. 65},
   publisher={Academic Press [A subsidiary of Harcourt Brace Jovanovich,
   Publishers], New York-London},
   date={1975},
   pages={xviii+268},
}

\bib{bcmr}{article}{
   author={Brezis, Ha{\"{\i}}m},
   author={Cazenave, Thierry},
   author={Martel, Yvan},
   author={Ramiandrisoa, Arthur},
   title={Blow up for $u_t-\Delta u=g(u)$ revisited},
   journal={Adv. Differential Equations},
   volume={1},
   date={1996},
   number={1},
   pages={73--90},
   issn={1079-9389},
}

\bib{brezis-vazquez}{article}{
   author={Brezis, Haim},
   author={V{\'a}zquez, Juan Luis},
   title={Blow-up solutions of some nonlinear elliptic problems},
   journal={Rev. Mat. Univ. Complut. Madrid},
   volume={10},
   date={1997},
   number={2},
   pages={443--469},
   issn={0214-3577},
}

\bib{cabre}{article}
{
   author={Cabr{\'e}, Xavier},
   title={Regularity of minimizers of semilinear elliptic problems up to dimension four},
   date={2009},
}

\bib{CC04}{article}{
   author={Cabr{\'e}, Xavier},
   author={Capella, Antonio},
   title={Regularity of radial minimizers and extremal solutions of
   semilinear elliptic equations},
   journal={J. Funct. Anal.},
   volume={238},
   date={2006},
   number={2},
   pages={709--733},
   issn={0022-1236},
}

\bib{cabre-solamorales}{article}{
   author={Cabr{\'e}, Xavier},
   author={Sol{\`a}-Morales, Joan},
   title={Layer solutions in a half-space for boundary reactions},
   journal={Comm. Pure Appl. Math.},
   volume={58},
   date={2005},
   number={12},
   pages={1678--1732},
   issn={0010-3640},
}

\bib{cabre-tan}{article}
{
   author={Cabr{\'e}, Xavier},
   author={Tan, Jinggang},
   title={Positive solutions of nonlinear problems involving the square root of the Laplacian},
   date={2009},
}

\bib{Cabre-Sire}{article}
{
   author={Cabr{\'e}, Xavier},
   author={Sire, Y.},
   title={Nonlinear equations for fractional Laplacians I: Regularity, maximum principles,
and Hamiltonian estimates},
   date={2010},
}

\bib{cafS}{article}{
   author={Caffarelli, Luis},
   author={Silvestre, Luis},
   title={An extension problem related to the fractional Laplacian},
   journal={Comm. Partial Differential Equations},
   volume={32},
   date={2007},
   number={7-9},
   pages={1245--1260},
   issn={0360-5302},
}

\bib{caffarelli-salsa-silvestre}{article}{
   author={Caffarelli, Luis A.},
   author={Salsa, Sandro},
   author={Silvestre, Luis},
   title={Regularity estimates for the solution and the free boundary of the
   obstacle problem for the fractional Laplacian},
   journal={Invent. Math.},
   volume={171},
   date={2008},
   number={2},
   pages={425--461},
   issn={0020-9910},
}

\bib{crandall-rabinowitz}{article}{
   author={Crandall, Michael G.},
   author={Rabinowitz, Paul H.},
   title={Some continuation and variational methods for positive solutions
   of nonlinear elliptic eigenvalue problems},
   journal={Arch. Rational Mech. Anal.},
   volume={58},
   date={1975},
   number={3},
   pages={207--218},
   issn={0003-9527},
}

\bib{davila-dupaigne-montenegro}{article}{
   author={D{\'a}vila, Juan},
   author={Dupaigne, Louis},
   author={Montenegro, Marcelo},
   title={The extremal solution of a boundary reaction problem},
   journal={Commun. Pure Appl. Anal.},
   volume={7},
   date={2008},
   number={4},
   pages={795--817},
   issn={1534-0392},
}

\bib{FKS}{article}{
   author={Fabes, Eugene B.},
   author={Kenig, Carlos E.},
   author={Serapioni, Raul P.},
   title={The local regularity of solutions of degenerate elliptic
   equations},
   journal={Comm. Partial Differential Equations},
   volume={7},
   date={1982},
   number={1},
   pages={77--116},
   issn={0360-5302},
}

\bib{gelfand}{article}{
   author={Gel{\cprime}fand, I. M.},
   title={Some problems in the theory of quasilinear equations},
   journal={Amer. Math. Soc. Transl. (2)},
   volume={29},
   date={1963},
   pages={295--381},
}
\bib{gnn}{article}{
   author={Gidas, B.},
   author={Ni, Wei Ming},
   author={Nirenberg, L.},
   title={Symmetry and related properties via the maximum principle},
   journal={Comm. Math. Phys.},
   volume={68},
   date={1979},
   number={3},
   pages={209--243},
   issn={0010-3616},
}

\bib{joseph-lundgren}{article}{
   author={Joseph, D. D.},
   author={Lundgren, T. S.},
   title={Quasilinear Dirichlet problems driven by positive sources},
   journal={Arch. Rational Mech. Anal.},
   volume={49},
   date={1972/73},
   pages={241--269},
   issn={0003-9527},
}

\bib{lions}{article}{
   author={Lions, J.-L.},
   title={Th\'eor\`emes de trace et d'interpolation. I},
   journal={Ann. Scuola Norm. Sup. Pisa (3)},
   volume={13},
   date={1959},
   pages={389--403},
}

\bib{lions-magenes}{book}{
   author={Lions, J.-L.},
   author={Magenes, E.},
   title={Probl\`emes aux limites non homog\`enes et applications. Vol. 1},
   series={Travaux et Recherches Math\'ematiques, No. 17},
   publisher={Dunod},
   place={Paris},
   date={1968},
   pages={xx+372},
}

\bib{liouville}{article}{
   author={Liouville, R.},
   title={Sur l'{\'e}quation aux diff{\'e}rences partielles $d^2 log\lambda/du dv \pm \lambda/(2 a^2) = 0$},
   journal={Journal de Mathematiques Pures et Appliques},
   volume={18},
   date={1853},
   number={},
   pages={71--72},
}

\bib{mignot-puel}{article}{
   author={Mignot, Fulbert},
   author={Puel, Jean-Pierre},
   title={Sur une classe de probl\`emes non lin\'eaires avec non
   lin\'eairit\'e positive, croissante, convexe},
   journal={Comm. Partial Differential Equations},
   volume={5},
   date={1980},
   number={8},
   pages={791--836},
   issn={0360-5302},
}

\bib{nedev}{article}{
   author={Nedev, Gueorgui},
   title={Regularity of the extremal solution of semilinear elliptic
   equations},
   journal={C. R. Acad. Sci. Paris S\'er. I Math.},
   volume={330},
   date={2000},
   number={11},
   pages={997--1002},
   issn={0764-4442},
}

\bib{tartar}{book}{
   author={Tartar, Luc},
   title={An introduction to Sobolev spaces and interpolation spaces},
   series={Lecture Notes of the Unione Matematica Italiana},
   volume={3},
   publisher={Springer},
   place={Berlin},
   date={2007},
   pages={xxvi+218},
   isbn={978-3-540-71482-8},
   isbn={3-540-71482-0},
}

\end{biblist}
\end{bibdiv}

\end{document}